\definecolor{Green}{RGB}{0,128,0}
\newcommand{\ud}{\mathrm d}
\newcommand{\ee}{\mathbb{E}}
\newcommand{\bp}{\mathbb{P}}
\newcommand{\br}{\mathbb{R}}
\newcommand{\ri}{\mathrm{i}}
\begin{document}
\section{Introduction}
In this paper, we consider the following generalized fractional kinetic equation
\begin{align}\label{FKE} 
\begin{cases}
\partial_{t} u(t, x)+(I-\Delta)^{\alpha/2}(-\Delta)^{\gamma/2} u(t, x)= \dot{W}^{H,\beta}(t, x), \quad (t,x) \in(0,T]\times\br^d,\\
u(0,x)=u_0(x), \quad x\in \br^d,
\end{cases}
\end{align}
where $u_0\in L^1(\br^d;\br)$, $T>0,\alpha \ge 0, \gamma>0$ and the noise $\dot{W}^{H,\beta}:=\frac{\partial^{d+1}W^{H,\beta}}{\partial t\partial x_1\dots\partial x_d}$ can be understood as a generalized Gaussian field which is fractional in time with a Hurst index $H\in(1/2,1)$ and white or colored in space with a scaling parameter $\beta\in(0,d]$ (see Section \ref{section2}
 for more details). The operators $(I-\Delta)^{\alpha/2}$ and $(-\Delta)^{\gamma/2}$ are respectively interpreted as inverses of the Bessel potential and the Riesz potential, whose composition is a natural mathematical object to describe the fractal phenomena and long-range dependence which display in many fields of application (see \cite{MR1808915} and references therein). 
This type of fractional kinetic equation plays an instrumental role in modeling various physical phenomena, such as the diffusion in porous media with fractal geometry, kinematics in viscoelastic media, propagation of seismic waves, etc (see \cite{MR1859007}), and we refer to for instance \cite{MR1808915,MR2144558,MR2571321,MR3999099,MR3374870} for its theoretic research. Moreover, \eqref{FKE} includes many special examples such as the stochastic heat equation ($\alpha = 0, \gamma = 2$) and the stochastic biharmonic heat equation ($\alpha = 0,\gamma=4$).

The goal of the present paper is to study hitting properties of the $\br^n$-valued random field $\hat u$ with components being independent copies of the solution $u$ to \eqref{FKE} which is well-posed if and only if $\alpha_1:=(\alpha+\gamma)H+\beta/2>0$. To be specific,
given $t_0\in(0,T]$ and $M>0$, we are interested in the hitting probability $$\bp(\hat u([t_0,T]\times[-M,M]^d) \cap A \neq \emptyset)$$ for a Borel set  $A \subseteq \br^n$. In particular, if $\bp(\hat u([t_0,T]\times[-M,M]^d) \cap A \neq \emptyset)=0$,  then $A$ is called polar for $\hat u$; otherwise, $A$ is called non-polar. A point $z \in \br^n$ is called polar for $\hat u$ if the singleton $\{z\}$ is polar. 
Hitting probabilities of solutions to systems of stochastic partial differential equations have been extensively studied in \cite{MR1902843,Hinojosa,MR2073187,MR2365643,MR2496438,MR2759182}, etc. In all the literatures mentioned above, the noises considered are white in time. The hitting properties for the case of noise being fractional in time are often more difficult to obtain, due to the complex covariance structure associated to the noise. For the relevant work, we are only aware of \cite{MR2513117,MR4333508} for systems of stochastic heat equations and \cite{MR3231213} for systems of stochastic wave equations. The present work aims to make further contribution in this direction.

It is universally accepted that one of fundamental ingredients to derive the bounds of hitting probabilities for a random field concerns a detailed analysis for the canonical pseudo-distance associated to the random field. For our setting, we prove that for any $(t,x),(s,y)\in[t_0,T]\times[-M,M]^d$, the associated canonical pseudo-distance $\|u(t, x)-u(s, y)\|_{L^2(\Omega)}$ compares with
\begin{align} \label{joint_reg}
\varrho\big((t,x),(s,y)\big)=|t-s|^{\alpha_2}+\left(\ln \frac{2e\sqrt{d}M}{|x-y|}\right)^{\frac{1}{2}\mathbb 1_{\{\alpha_1=1\}}}|x-y|^{\alpha_1\wedge 1},
\end{align}
where $\alpha_2:=\frac{\alpha_1}{\alpha+\gamma}$. The main obstacle of obtaining this ingredient lies in dealing with the interaction of the fractional noise and the operator $(I-\Delta)^{\alpha/2}(-\Delta)^{\gamma/2}$. In our proofs, several complex calculations are involved in order to handle the singularity of the correlation function associated to the fractional noise and the nonhomogeneous term in the Fourier transform of the Green function which is resulted from the operator $(I-\Delta)^{\alpha/2}$. We note that, unlike most cases, this canonical pseudo-distance has anisotropies described by the function $\varrho$ (which includes a logarithm factor) other than power functions. For this kind of anisotropic Gaussian random fields, some powerful criteria are provided in \cite{MR4333508} to deal with its hitting probabilities, which have been successfully applied to systems of linear stochastic heat equations with fractional noise (see \cite[Section 4]{MR4333508}) and linear stochastic biharmonic heat equations with space-time white noise (see \cite{Hinojosa}). We also adopt the criteria to investigate hitting probabilities of the system of generalized fractional kinetic equations with time-fractional noise. In detail, some second order properties of $u$ are further investigated, which together with the equivalent expression \eqref{joint_reg} of the canonical pseudo-distance yield the bounds of hitting probabilities of $\hat u$ in terms of the $\mathfrak{g}_q$-capacity and $g_q$-Hausdorff measure (see Theorem \ref{hittingprobability}).

As a consequence of Theorem \ref{hittingprobability}, points are non-polar for $\hat u$ if $n<Q:=\frac{1}{\alpha_2}+\frac{d}{\alpha_1\wedge 1}$ and are polar for $\hat u$ if $n>Q$ (see Corollary \ref{polaritypoints}). Therefore, the critical value $Q$ is called the critical dimension for hitting points of $\hat u$ when $Q$ is an integer. However, in the critical case $n = Q$, the issue of polarity for $z\in \mathbb R^{Q}$ is not answered by Theorem \ref{hittingprobability}. Therefore, another purpose of this paper is to discuss the polarity of points for $\hat u$ in the critical dimension $Q$. 
As far as polarity of points in the critical dimension is concerned, it is worthwhile to refer to the seminal work \cite{MR3737922,MR4317713} which resolved the issue of polarity of points in the critical dimension for Gaussian and non-Gaussian random fields, respectively. 
Taking advantage of a harmonizable representation of $u$ and the criteria established by \cite{MR3737922}, we prove that if $\alpha_1\in(0,1)$, points are polar for $\hat{u}$ in the critical dimension $Q$ (see Theorem \ref{critical}). 
This result particularly provides strong evidence for the conjecture on the issue of polarity for singletons in the critical dimension for a linear stochastic biharmonic heat equation raised in \cite{Hinojosa} (see Remark \ref{conjection}).

Finally, we outline the rest of the paper. Section \ref{section2} briefly set up a further description of the structure of the Gaussian noise that we are dealing with and introduce some preliminary material of the potential theory. We also prove the existence and uniqueness of a random field solution $u$ to \eqref{FKE} in Subsection \ref{solution}. Section \ref{holdercon} is devoted to the mean square modulus of continuity of the solution $u$ both in time and in space. Finally, Section \ref{section4} focuses on hitting properties of $\hat u$, including the lower and upper bounds of hitting probabilities and polarity of points in the critical dimension for $\hat u$. 

\vspace{2em}
\noindent{\bf Notations.} In the remainder of the article, as usual, we use $C$ to denote various positive real constants and $C_{a_1,\dots,a_k}$ to emphasize the dependence on certain parameters $a_1,\dots,a_k$. Their values may differ from occurrence to occurrence. Specific constants will be denoted by $C_i,C^{\prime}$ and $c$, etc. The Euclidean norm and inner product on $\br^k$ are denoted by $|\cdot|$ and $\langle \cdot,\cdot\rangle$, respectively. For any integrable function $f:\br^k \to\br$, its Fourier transform is defined by $\mathcal{F} f(\xi)=\int_{\br^k} e^{-\ri \langle\xi, x\rangle} f(x) \ud x.$ We write $\mathscr B(S)$ as
 the Borel $\sigma$-algebra of  a topological space $S$.
As usual, for a complete probability space $(\Omega,\mathscr F,\bp)$, $\ee$ denotes the expectation w.r.t. $\bp$ and $L^p(\Omega;\br^k)$ denotes the space of all $\br^k$-valued random variables with finite $p$-th  moment, endowed with the norm $\|\cdot\|_{p}:=[\ee|\cdot|^p]^{\frac{1}{p}}$. For any $a,b \in \br$, let $a\vee b := \max\{a,b\}$ and $a\wedge b := \min\{a,b\}$. For any set $A$, denote by $\mathbb{1}_{A}$ the indicator function of $A$. For a monotonic increasing function $q$, we denote by $\check{q}$ the inverse function of $q$. Throughout the paper, we always assume $t_{0} \in(0, T)$ and $M>1$ are fixed.

\section{Prerequisites} \label{section2}
This section is devoted to a brief introduction to some basic background knowledge that will be used throughout this paper. We consider first the well-posedness of equation \eqref{FKE} and later a further description of the noise $\dot W^{H,\beta}$. We also provide some basic elements of the potential theory.

\subsection{Well posedness of equation \eqref{FKE}}\label{solution}
Let $d \ge 1$ be a spatial dimension parameter. 
In this paper, the noise $\dot{W}^{H,\beta}$ is the formal partial derivative of a centered Gaussian random field $W^{H,\beta}$ which is fractional in time with a Hurst index $H\in(\frac{1}{2},1)$ and colored ($\beta\in(0,d)$) or white ($\beta=d$) in space with a scaling parameter $\beta$. This type of noise corresponds to the covariance function
\begin{align*}
\ee\left[\dot{W}^{H,\beta}(t, x) \dot{W}^{H,\beta}(s, y)\right] &=\alpha_{H}|t-s|^{2H-2}f(x-y),  \quad (t, x),(s, y) \in[0, T] \times \br^{d},
\end{align*}
where $f$ is a nonnegative and nonnegative definite tempered measure and $\alpha_{H}=H(2H-1)$. According to the Bochner theorem, $f$ is the Fourier transform of a tempered nonnegative measure $\mu$ on $\mathbb{R}^{d}$, which is the so-called spatial spectral measure. 

Next, we shall give a brief introduction to the stochastic integral w.r.t. $W^{H,\beta}$ for deterministic processes. Let $\mathscr{B}_b(\br^d)$ be the collection of Borel measurable subsets of $\br^d$ with finite Lebesgue measure and $\mathcal{E}$ be the set of linear combinations of elementary functions $\mathbb1_{[0, t] \times A}, t \ge 0, A \in \mathscr{B}_b(\br^d)$. Let $\mathcal{H}$ be the completion of $\mathcal E$ w.r.t. the inner product
\begin{align*}
\langle\phi, \psi\rangle_{\mathcal{H}}=&~\alpha_{H}\int_0^T\int_0^T|r-w|^{2H-2} \ud r\ud w\int_{\mathbb{R}^{d}} \int_{\br^{d}} \phi(r,x) f(x-y) \psi(w,y) \ud x \ud y \quad \forall\, \phi, \psi \in \mathcal E.
\end{align*} 
The Plancherel theorem shows that the inner product of $\mathcal H$ can be rewritten as
\begin{align*}
\langle\phi, \psi\rangle_{\mathcal{H}}=&~\alpha_{H}\int_0^T\int_0^T|r-w|^{2H-2} \ud r\ud w\int_{\br^d} \mathcal{F}\phi(r, \cdot)(\xi) \overline{\mathcal{F}\psi(w, \cdot)(\xi)}\mu(\ud\xi)\\
=&~\beta_{H}\int_{\br}|\tau|^{1-2H} \ud \tau\int_{\br^d} \mathcal{F}(\phi\mathbb 1_{[0,T]})(\tau,\xi) \overline{\mathcal{F}(\psi\mathbb 1_{[0,T]})(\tau,\xi)}\mu(\ud\xi) \quad \forall\, \phi, \psi \in \mathcal E
\end{align*}
with $\beta_{H}=\frac{\alpha_{H}\Gamma(H-1/2)}{4^{1-H}\sqrt\pi\Gamma(1-H)}$ and $\Gamma(a)=\int_0^\infty t^{a-1}e^{-t}\ud t~ (a>0)$ denoting the Gamma function.
For the sake of simplicity, we sometimes omit the coefficients $\alpha_{H}$ and $\beta_{H}$ in the expression of the covariance function. Further assumptions on $\mu$ will be specified later.

Consider an $L^{2}(\Omega,\br)$-valued centered Gaussian process $\{W^{H,\beta}(h)\}_{ h\in \mathcal E}$ with covariance
\begin{align}\label{isometry}
\ee[W^{H,\beta}(h_1) W^{H,\beta}(h_2)]=\langle h_1, h_2\rangle_{\mathcal{H}}\quad \forall \, h_1,h_2\in\mathcal E.
\end{align}
The map $h \mapsto W^{H,\beta}(h)$ is an isometry between $\mathcal{E}$ and the Gaussian space of $W^{H,\beta}$, which can be extended to $\mathcal{H}$. This extension defines an isonormal Gaussian process $W^{H,\beta}=\{W^{H,\beta}(h)\}_{ h \in \mathcal{H}} .$ We write
$$
\int_{0}^{T} \int_{\mathbb{R}^{d}} h(t, x) W^{H,\beta}(\ud t, \ud x):=W^{H,\beta}(h) \quad \forall \, h \in \mathcal{H},
$$
which defines the stochastic integral of an element $h \in \mathcal{H}$ w.r.t.  $W^{H,\beta}$.

Now we proceed to establish the well-posedness of  \eqref{FKE}. The random field approach to stochastic partial differential equations initiated in \cite{MR876085} and extended by \cite{MR1684157} has been an active line of research in the past few decades. By virtue of the ideas given by \cite{MR1684157}, we will understand a solution of \eqref{FKE}  to be a jointly measurable adapted process satisfying the integral form
\begin{align}\label{mildsol}
u(t,x)=\int_{\br^d}G(t,x-y)u_0(y)\ud y+\int_0^t\int_{\br^d}G(t-s,x-y)W^{H,\beta}(\ud s, \ud y)
\end{align}
with $G$ being the Green function (called also the fundamental solution) of
\begin{align*}
\partial_{t} G(t, x)+(I-\Delta)^{\alpha/2}(-\Delta)^{\gamma/2} G(t, x)=0.
\end{align*}
Moreover, the Green function $G$ (see e.g., \cite{MR2571321}) can be written as
\begin{align}\label{green}
G(t,x)=\frac{1}{(2\pi)^{d}}\int_{\mathbb{R}^{d}} e^{\ri\langle x, \xi\rangle} \exp \left\{-t |\xi|^{\gamma}\left(1+|\xi|^{2}\right)^{\frac{\alpha}{2}}\right\} \ud \xi
\end{align}
and its Fourier transform $\mathcal{F} G(t,\cdot)(\xi)$ w.r.t. the space variable is given by
\begin{align}\label{fouriertrans}
\mathcal{F} G(t,\cdot)(\xi)=\exp \left\{-t |\xi|^{\gamma}\left(1+|\xi|^{2}\right)^{\frac{\alpha}{2}}\right\}.
\end{align}
For the sake of convenience, denote $$\Psi(\xi):=|\xi|^{\gamma}(1+|\xi|^2)^{\alpha/2}, \quad\xi\in\br^d.$$

To obtain the well-posedness of \eqref{FKE}, we shall make a standard assumption on the spectral measure $\mu$.
\begin{assumption}\label{dalang-condition}
The spectral measure $\mu$ satisfies the following integrability condition
\begin{align}\label{muhyp}
\int_{\br^d}\left(\frac{1}{1+|\xi|^2}\right)^{(\alpha+\gamma)H}\mu(\ud \xi)<\infty.
\end{align}
\end{assumption}
As the following theorem shows, the random field (or mild) solution of \eqref{FKE} in the sense given by \eqref{mildsol} uniquely exists under Assumption \ref{dalang-condition}. And Assumption \ref{dalang-condition} is sharp, in the sense that it is also necessary in the case of additive noise. To simplify the notation, we write 
$$
G\ast u_{0}(t,x):=\int_{\br^d}G(t,x-y)u_0(y)\ud y
$$
and
$$
\bar{u}(t,x):=\int_0^t\int_{\br^d}G(t-s,x-y)W^{H,\beta}(\ud s, \ud y).
$$

\begin{theorem}[Well-posedness] \label{wellpose}
For any $u_0\in L^1({\br^d})$, the process $\{u(t, x)\}_{(t, x) \in[0, T] \times \br^d}$ given by \eqref{mildsol} exists if and only if Assumption \ref{dalang-condition} holds. In this case, for all $p> 0$ and $T>0$,
$$
\sup _{(t,x ) \in[0, T]\times \mathbb{R}^{d}} \ee\left|\bar{u}(t, x)\right|^p<+\infty.
$$
\end{theorem}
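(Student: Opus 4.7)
The plan is to split the solution into the deterministic term $G\ast u_0$ and the stochastic convolution $\bar u$, handle the former by Young's inequality, and spend the real work on $\bar u$.

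First I would note that since $\mathcal{F}G(t,\cdot)(\xi)=e^{-t\Psi(\xi)}\le 1$, the Green function $G(t,\cdot)$ is bounded on $\br^d$ for $t>0$ (being the inverse Fourier transform of an $L^1$ function once $\Psi(\xi)\to\infty$), and therefore $(t,x)\mapsto G\ast u_0(t,x)$ is well defined and bounded on $[0,T]\times\br^d$ for every $u_0\in L^1(\br^d)$. The substance of the theorem lies in the stochastic integral $\bar u(t,x)$.

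The core step is to identify when $h_{t,x}(r,y):=G(t-r,x-y)\mathbb{1}_{[0,t]}(r)$ belongs to $\mathcal H$. Applying the first form of the inner product and using that $h_{t,x}$ is real with space-Fourier transform $e^{-\ri\langle\xi,x\rangle}e^{-(t-r)\Psi(\xi)}$, the phases cancel and one obtains
\begin{align*}
\|h_{t,x}\|_{\mathcal H}^{2}=\alpha_{H}\int_{\br^{d}}\mu(\ud\xi)\int_{0}^{t}\!\int_{0}^{t}|r-w|^{2H-2}e^{-(2t-r-w)\Psi(\xi)}\ud r\ud w.
\end{align*}
Changing variables $s=t-r$, $u=t-w$ and rescaling $s=a/\Psi(\xi)$, $u=b/\Psi(\xi)$ (for $\Psi(\xi)>0$), the inner double integral equals $\Psi(\xi)^{-2H}\int_{0}^{t\Psi(\xi)}\!\int_{0}^{t\Psi(\xi)}|a-b|^{2H-2}e^{-a-b}\ud a\ud b$. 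Bounding the last integral above by the constant $C_{H}:=\int_{0}^{\infty}\!\int_{0}^{\infty}|a-b|^{2H-2}e^{-a-b}\ud a\ud b<\infty$ (finite precisely because $H>1/2$) and below, for $t\Psi(\xi)\ge 1$, by a positive constant, gives a two-sided comparison
\begin{align*}
\|h_{t,x}\|_{\mathcal H}^{2}\asymp\int_{\br^{d}}\bigl(1+\Psi(\xi)\bigr)^{-2H}\mu(\ud\xi),
\end{align*}
with constants depending only on $H$ and $t$ (bounded on $[t_{0},T]$ and, more importantly, the upper bound is uniform over $t\in[0,T]$ and all $x\in\br^d$). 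Since $\Psi(\xi)=|\xi|^{\gamma}(1+|\xi|^{2})^{\alpha/2}$ satisfies $1+\Psi(\xi)\asymp(1+|\xi|^{2})^{(\alpha+\gamma)/2}$ at infinity and both quantities are bounded near the origin, this integral is equivalent to the Dalang-type integral appearing in Assumption \ref{dalang-condition}. Thus $h_{t,x}\in\mathcal H$ for some (equivalently every) $(t,x)$ iff \eqref{muhyp} holds, proving both the necessary and the sufficient direction. When finite, $\bar u(t,x):=W^{H,\beta}(h_{t,x})$ is well defined by the isometry \eqref{isometry} and $\ee|\bar u(t,x)|^{2}=\|h_{t,x}\|_{\mathcal H}^{2}$ is bounded uniformly in $(t,x)\in[0,T]\times\br^{d}$.

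Finally, I would upgrade $L^{2}$ bounds to $L^{p}$ bounds for all $p>0$ by observing that, as a Wiener integral of a deterministic kernel against a Gaussian isonormal process, $\bar u(t,x)$ is a centered Gaussian random variable, so all its moments are controlled by its variance. Joint measurability of $(t,x)\mapsto\bar u(t,x)$ follows from continuity of $(t,x)\mapsto h_{t,x}$ into $\mathcal H$ and the isometry property. The only delicate point I anticipate is a clean derivation of the comparison $\|h_{t,x}\|_{\mathcal H}^{2}\asymp\int(1+\Psi(\xi))^{-2H}\mu(\ud\xi)$ uniformly in $(t,x)$, particularly near $t=0$; this is handled by the scaling argument above, since the lower bound is only needed in one direction (necessity) where one can fix any $t>0$.
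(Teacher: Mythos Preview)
Your proposal is correct and follows essentially the same approach as the paper: split off the deterministic convolution, reduce the stochastic term to computing $\|h_{t,x}\|_{\mathcal H}^{2}$, compare this norm with the Dalang-type integral via the scaling $a=\Psi(\xi)s$, $b=\Psi(\xi)u$, and finish with Gaussianity for the $L^p$ moments. The only cosmetic difference is that the paper packages the two-sided bound on $N_t(\xi)=\alpha_H\int_0^t\!\int_0^t|r-w|^{2H-2}e^{-(r+w)\Psi(\xi)}\ud r\ud w$ into a separate Lemma (Lemma~\ref{estofh}), whereas you derive the comparison inline; your remark that the lower bound is only needed for a fixed $t>0$ (to get necessity) is exactly how that lemma is used.
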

\begin{proof}
It follows from \eqref{green} that for all $t\in(0,T]$, $G(t,\cdot)\in C_0(\br^d)$ of all continuous functions vanishing at infinity. For any $u_0\in L^1({\br^d})$, we have
\begin{align*}
\left|\int_{\br^d}G(t,x-y)u_0(y)\ud y\right|\le ||u_0||_{L^1(\br^d)}||G(t,\cdot)||_{L^{\infty}(\br^d)}<\infty,
\end{align*}
which implies that $G\ast u_{0}(t,x)$ is well defined. What remains to be proved is that $G(t-\cdot, x-\cdot) \in \mathcal{H}$ if and only if Assumption \ref{dalang-condition} holds. Define
\begin{align}\label{Jt}
J_t:=&~\alpha_{H} \int_{0}^{t} \int_{0}^{t}\ud r \ud w|r-w|^{2 H-2} \int_{\br^{d}}\mu(\ud \xi)  \mathcal{F} G(t-r, x-\cdot)(\xi)\overline{\mathcal{F} G(t-w, x-\cdot)(\xi)}\notag\\
=&~\alpha_{H}\int_{0}^{t} \int_{0}^{t}\ud r \ud w|r-w|^{2 H-2} \int_{\br^{d}}\mu(\ud \xi)   e^{-(r+w)\Psi(\xi)}.
\end{align}
By virtue of Lemma \ref{estofh}, we obtain
$$C\int_{\br^d}\left(\frac{1}{1+|\xi|^2}\right)^{(\alpha+\gamma)H}\mu(\ud \xi)\le J_t\le C^\prime\int_{\br^d}\left(\frac{1}{1+|\xi|^2}\right)^{(\alpha+\gamma)H}\mu(\ud \xi)$$
for some $C,C^\prime>0$, which implies the sufficiency and necessity of Assumption \ref{dalang-condition}.

For arbitrary $p>0$, it follows from the isometry formula \eqref{isometry} and the fact that $\bar{u}(t,x)$ is Gaussian that 
$$
\sup _{t \in[0, T], x \in \mathbb{R}^{d}}\ee|\bar{u}(t, x)|^{p}= \sup _{t \in[0, T], x \in \mathbb{R}^{d}}C_p\|\bar{u}(t, x)\|_{2}^p=C_p\sup_{t \in[0, T]} J_{t}^{p/2}=C_pJ_{T}^{p/2}<\infty
$$
with $C_p:=\frac{2^{p/2}}{\pi^{1/2}}\Gamma(\frac{p+1}{2})$.
\end{proof}

The following estimates are crucial to prove the well-posedness and the spatial regularity of \eqref{FKE}. Their proofs are similar to that of \cite[Proposition 4.3]{MR2728174}, and we omit them. 
\begin{lemma}\label{estofh}
Let 
\begin{align*}
N_t^{*}(\xi):=\alpha_{H}\int_0^t\int_0^t|r-w|^{2H-2}e^{-(r+w)|\xi|^{\gamma+\alpha}}\ud r\ud w
\end{align*}
and
$$
N_t(\xi):=\alpha_{H}\int_0^t\int_0^t|r-w|^{2H-2}e^{-(r+w)|\xi|^{\gamma}(1+|\xi|^2)^{\alpha/2}}\ud r\ud w.
$$
Then for any $t\ge0,\xi\in\br^d$,
\begin{align}\label{Nt*bound}
\frac{1}{4}(t\wedge 2^{-1})^{2H}\left(\frac{1}{1+|\xi|^2}\right)^{(\alpha+\gamma)H}\le N_t^{*}(\xi)\le C_{H}(t^{2H}+1)\left(\frac{1}{1+|\xi|^2}\right)^{(\alpha+\gamma)H}
\end{align}
and
\begin{align}\label{Ntbound}
\frac{1}{4}(t\wedge 2^{-1})^{2H}\left(\frac{1}{1+|\xi|^2}\right)^{(\alpha+\gamma)H}\le N_t(\xi)\le C_{H}(t^{2H}+1)\left(\frac{1}{1+|\xi|^2}\right)^{(\alpha+\gamma)H}.
\end{align}
\end{lemma}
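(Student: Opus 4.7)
The plan is to unify both estimates via a single scaling argument. Both integrals take the common form
\[
\mathcal{N}(\xi;\phi):=\alpha_H\int_0^t\int_0^t|r-w|^{2H-2}e^{-(r+w)\phi(\xi)}\,\mathrm dr\,\mathrm dw,
\]
with $\phi(\xi)=|\xi|^{\alpha+\gamma}$ for $N_t^*(\xi)$ and $\phi(\xi)=\Psi(\xi)$ for $N_t(\xi)$. On $\{\phi(\xi)>0\}$ the change of variables $u=r\phi(\xi)$, $v=w\phi(\xi)$ rewrites $\mathcal{N}(\xi;\phi)=\phi(\xi)^{-2H}\Lambda(t\phi(\xi))$, where
\[
\Lambda(s):=\alpha_H\int_0^s\int_0^s|u-v|^{2H-2}e^{-(u+v)}\,\mathrm du\,\mathrm dv
\]
is nondecreasing. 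Two elementary facts about $\Lambda$ will suffice: the Fubini identity $\alpha_H\int_0^s\int_0^s|u-v|^{2H-2}\,\mathrm du\,\mathrm dv=s^{2H}$, which yields $\Lambda(s)\le s^{2H}$ after dropping the exponential, and $\Lambda(\infty)=\alpha_H\Gamma(2H-1)=H\Gamma(2H)$, obtained by a further Fubini computation. The case $\phi(\xi)=0$ is handled directly and is trivial.

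\textbf{Upper bound.} Combining $\Lambda(s)\le\min(s^{2H},H\Gamma(2H))$ with the scaling relation gives $\mathcal{N}(\xi;\phi)\le\min\!\bigl(t^{2H},H\Gamma(2H)\phi(\xi)^{-2H}\bigr)$. To match this with the target gauge $(1+|\xi|^2)^{-(\alpha+\gamma)H}$ I would split on $|\xi|$: on $|\xi|\le 1$ use the first term together with $(1+|\xi|^2)^{-(\alpha+\gamma)H}\ge 2^{-(\alpha+\gamma)H}$; on $|\xi|>1$ use the second term together with the elementary estimate $\phi(\xi)^{2H}\ge 2^{-(\alpha+\gamma)H}(1+|\xi|^2)^{(\alpha+\gamma)H}$, which follows from $1+|\xi|^2\le 2|\xi|^2$ and $\Psi(\xi)\ge|\xi|^{\alpha+\gamma}$ on $|\xi|\ge 1$.

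\textbf{Lower bound and main obstacle.} For the lower bound I restrict the integration defining $\Lambda(s)$ to $u,v\in[0,\min(s,1/2)]$, where $u+v\le 1$ and hence $e^{-(u+v)}\ge e^{-1}$; together with the Fubini identity this gives $\Lambda(s)\ge e^{-1}\min(s,1/2)^{2H}$, which rescales to
\[
\mathcal{N}(\xi;\phi)\ge e^{-1}\min\!\bigl(t,1/(2\phi(\xi))\bigr)^{2H}.
\]
It remains to verify $e^{-1}\min(t,1/(2\phi(\xi)))^{2H}\ge\tfrac14(t\wedge 1/2)^{2H}(1+|\xi|^2)^{-(\alpha+\gamma)H}$ in two cases: when $\phi(\xi)\le 1$ the minimum is $\ge t\wedge 1/2$ and $(1+|\xi|^2)^{-(\alpha+\gamma)H}\le 1$, reducing everything to $e^{-1}\ge 1/4$; when $\phi(\xi)>1$ I use the universal inequality $(1+|\xi|^2)^{-(\alpha+\gamma)H}\le\phi(\xi)^{-2H}$ (immediate in both instances from $|\xi|^2\le 1+|\xi|^2$) and argue by the subcases $t\gtrless 1/(2\phi(\xi))$, each of which also reduces to $e^{-1}\ge 1/4$ once one uses $t\wedge 1/2\le 1/2$ or $\phi(\xi)>1$. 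The main obstacle, such as it is, is purely bookkeeping: the target gauge $(1+|\xi|^2)^{-(\alpha+\gamma)H}$---chosen so that Assumption~\ref{dalang-condition} treats low and high frequencies uniformly---does not match the scaling gauge $\phi(\xi)^{-2H}$ globally, which is what forces the case split on $\phi(\xi)$ in both directions and explains why an additive $t^{2H}+1$ rather than $t^{2H}$ appears in the upper bound.
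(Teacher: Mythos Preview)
Your argument is correct. The paper itself omits the proof of this lemma, noting only that it is ``similar to that of \cite[Proposition 4.3]{MR2728174}''; there is nothing to compare against. The scaling reduction $\mathcal{N}(\xi;\phi)=\phi(\xi)^{-2H}\Lambda(t\phi(\xi))$ together with the two elementary bounds $\Lambda(s)\le\min(s^{2H},H\Gamma(2H))$ and $\Lambda(s)\ge e^{-1}\min(s,1/2)^{2H}$ is exactly the natural approach, and your case analysis matching $\phi(\xi)^{-2H}$ to the target gauge $(1+|\xi|^2)^{-(\alpha+\gamma)H}$ is clean. One cosmetic remark: the constant you obtain in the upper bound carries a factor $2^{(\alpha+\gamma)H}$, so it depends on $\alpha,\gamma$ in addition to $H$; this is harmless for every application in the paper, but strictly speaking the label $C_H$ in the statement is slightly optimistic (or is meant to suppress the fixed parameters $\alpha,\gamma$).
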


\subsection{Time-fractional noise}
In order to cover more types of noise, we will work under the following assumptions exclusively.
\begin{assumption} \label{noise}
The spectral measure $\mu$ satisfies the following conditions: 
\begin{itemize}
\item[(i)] $\mu$ is absolutely continuous w.r.t. the Lebesgue measure, i.e., $\mu(\ud\xi)=\Upsilon(\xi)\ud \xi$.
\item[(ii)] $\Upsilon$ satisfies the following scaling property for some $\beta \in(0, d]$:
\begin{align}\label{scaling}
\Upsilon(c \xi)=c^{\beta-d} \Upsilon(\xi) \quad \text { for all } c>0, \xi \in \br^{d}.
\end{align}
\item[(iii)] The following limits holds for some $C>0$,
\begin{align}
\lim_{|\xi|\to 0}\Upsilon(\xi)=\begin{cases}
\infty, \quad &\beta\in(0,d), \\
C, \quad &\beta=d.
\end{cases} 
\end{align}
\end{itemize}
\end{assumption}
\begin{remark} 
Under Assumption \ref{noise}, the scaling property of $\Upsilon$ implies that $\mu(c A)=c^{\beta} \mu(A)$
for all $c>0$ and  $A\in \mathscr{B}\left(\br^{d}\right)$. It follows from the scaling property that the value of $\Upsilon$ at one point completely determines the values of all points on the ray emanating from the origin in which it is located. In other words, $\Upsilon$ is completely determined by its restriction to the unit sphere $\mathbb S^{d-1}$. 
\end{remark}
In what follows, we will make frequent use of the following properties of $\Upsilon$.
\begin{lemma} \label{Upsilon}
Under Assumption \ref{noise}, $\Upsilon$ satisfies the following properties:
\begin{itemize}
\item[(i)] $\Upsilon(\xi)>0$ for any $\xi\in\br^d\setminus\{0\}$.
\item[(ii)] Its restriction to the unit sphere $\mathbb S^{d-1}$ has a positive lower bound, i.e., there exists a constant $C>0$, such that $\Upsilon(\xi)>C$ for any $\xi\in \mathbb S^{d-1}$.
\item[(iii)] $\Upsilon$ is integrable on $\mathbb S^{d-1}$, i.e., $\int_{\mathbb S^{d-1}}\Upsilon(\xi)\sigma(\ud \xi)<\infty$ with $\sigma(\ud \xi)$ being the surface area measure on $\mathbb S^{d-1}$.  
\end{itemize}
\end{lemma}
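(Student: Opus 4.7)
The plan is to combine the scaling relation \eqref{scaling} with the limit condition at the origin for (i) and (ii), and to exploit the temperedness of $\mu$ (built into the setup via Bochner's theorem) for (iii).

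For (i), I would argue by contradiction: if $\Upsilon(\xi_{0})=0$ for some $\xi_{0}\neq 0$, then \eqref{scaling} propagates this zero along the entire ray through $\xi_{0}$, in particular to the sequence $\xi_{0}/n\to 0$. This contradicts Assumption \ref{noise}(iii), whose limit is $+\infty$ when $\beta<d$ and a strictly positive constant $C$ when $\beta=d$. Part (ii) strengthens this: when $\beta\in(0,d)$, the hypothesis $\Upsilon(\xi)\to\infty$ produces some $\delta\in(0,1)$ with $\Upsilon(\xi)>1$ on the punctured ball $0<|\xi|<\delta$, and applying \eqref{scaling} with $c=\delta/2$ gives
\[
\Upsilon(\eta)=(\delta/2)^{d-\beta}\,\Upsilon((\delta/2)\eta)>(\delta/2)^{d-\beta}
\]
uniformly in $\eta\in\mathbb{S}^{d-1}$. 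When $\beta=d$, the scaling is degree-zero homogeneity, so $\Upsilon$ is constant on rays, and the limit at the origin pins this constant at $C>0$.

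For (iii), I would use the fact that $\mu$ is tempered, so $\int_{\br^{d}}(1+|\xi|^{2})^{-k}\mu(\ud\xi)<\infty$ for some $k>0$; I may enlarge $k$ so that $k>\beta/2$. Passing to polar coordinates $\xi=r\omega$ and using $\Upsilon(r\omega)=r^{\beta-d}\Upsilon(\omega)$, Tonelli's theorem splits the integral as
\[
\left(\int_{\mathbb{S}^{d-1}}\Upsilon(\omega)\,\sigma(\ud\omega)\right)\int_{0}^{\infty}\frac{r^{\beta-1}}{(1+r^{2})^{k}}\,\ud r,
\]
and the radial factor is finite (using $\beta>0$ at the origin and $2k>\beta$ at infinity) and strictly positive, so the angular factor must be finite.

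The only delicate point is the interplay between the scaling and the pointwise limit at the origin in parts (i)--(ii), which together must be harnessed to propagate strict positivity uniformly to the unit sphere; once this is in hand, part (iii) is a routine polar-coordinate computation built on temperedness and the freedom to enlarge $k$.
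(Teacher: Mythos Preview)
Your proposal is correct and the overall strategy matches the paper: use scaling plus the limit at the origin for (i)--(ii), and temperedness plus polar coordinates for (iii).

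The one genuine difference is in part (ii). The paper argues by contradiction: it assumes a sequence $\xi_n\in\mathbb S^{d-1}$ with $\Upsilon(\xi_n)<1/n$, rescales by $n^{-a}$ for suitably small $a>0$, and shows $\Upsilon(\xi_n n^{-a})\to 0$ while $|\xi_n n^{-a}|\to 0$, contradicting Assumption~\ref{noise}(iii). Your argument is direct: you extract a punctured ball on which $\Upsilon>1$ from the limit hypothesis and transport that bound back to $\mathbb S^{d-1}$ via scaling. Your route is slightly cleaner in that it produces an explicit lower bound $(\delta/2)^{d-\beta}$ (or $C$ when $\beta=d$) without invoking sequences. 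For (iii) the paper also uses temperedness and polar coordinates, but with the simpler weight $\mathbb 1_{\{|\xi|\le 1\}}$, obtaining the exact identity $\mu(\{|\xi|\le 1\})=\beta^{-1}\int_{\mathbb S^{d-1}}\Upsilon\,\ud\sigma$; your choice $(1+|\xi|^2)^{-k}$ with $k>\beta/2$ works equally well and is the same idea.
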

\begin{proof}
(i) It is sufficient to prove that $\Upsilon(\xi)>0$ for any $\xi\in\mathbb S^{d-1}$.  Assume by contradiction that there is a $\xi_0\in \mathbb S^{d-1}$ such that $\Upsilon(\xi_0)=0$. Then we have $\Upsilon(c\xi_0)=c^{\beta-d}\Upsilon(\xi_0)=0$ for any $c>0$. Letting $c\to0$, it then contradicts to Assumption \ref{noise} (iii).

(ii) Assume by contradiction that for any $n\in \mathbb N_+$, there is a $\xi_n\in \mathbb S^{d-1}$ such that $\Upsilon(\xi_n)<\frac{1}{n}$. Taking $a\in(0,\frac{1}{d-\beta})$ for the case of $\beta\in(0,d)$ and arbitary $a>0$ for the case of $\beta=d$, we have 
$$
\Upsilon(\xi_nn^{-a})=n^{a(d-\beta)}\Upsilon(\xi_n)<n^{a(d-\beta)-1},
$$
which implies that $\lim_{n\to \infty}\Upsilon(\xi_nn^{-a})=0$.  It then gives rise to a contradiction since $|\xi_nn^{-a}|\to 0$ as $n\to\infty$.

(iii) Since $\mu$ is tempered, $\mu(\{|\xi | \le 1\}) < \infty$. By using polar coordinates and the scaling property, we obtain
\begin{align*}
\mu(\{|\xi | \le 1\})=\int_0^1\int_{\mathbb S^{d-1}}\Upsilon(r\xi)\sigma(\ud \xi)r^{d-1}\ud r=\beta^{-1}\int_{\mathbb S^{d-1}}\Upsilon(\xi)\sigma(\ud \xi),
\end{align*}
which yields that the integral of $\Upsilon$ on the unit sphere is finite.
\end{proof}
\begin{remark}
Under Assumption \ref{noise}, the integrability condition \eqref{muhyp} is equivalent to $\beta\in\big(0,2(\alpha+\gamma)H\big)$ or $\alpha_1>0.$
Indeed, \eqref{muhyp} is equivalent to
\begin{align*}
\int_{|\xi|\le 1}\mu(\ud \xi)<\infty\quad \text{ and }\quad \int_{|\xi|\ge 1}\frac{1}{|\xi|^{2(\alpha+\gamma)H}}\mu(\ud \xi)<\infty.
\end{align*}
The first one holds since $\mu$ is tempered and the second one coincides with
\begin{align*}
\int_{|\xi|\ge 1}\frac{1}{|\xi|^{2(\alpha+\gamma)H}}\mu(\ud \xi)=&~\int_1^{\infty}r^{-2(\alpha+\gamma)H+d-1}\ud r \int_{\mathbb S^{d-1}}\Upsilon(rz)\sigma(\ud z)\\
=&~\int_{\mathbb S^{d-1}}\Upsilon(z)\sigma(\ud z)\int_1^{\infty}r^{-2(\alpha+\gamma)H+\beta-1}\ud r, 
\end{align*}
which is finite if and only if $\beta<2(\alpha+\gamma)H$. The equivalence can be also obtained by decomposing $\{|\xi|>1\}$ into $\{2^k<|\xi|\le 2^{k+1}\}_{k=0}^{\infty}$ and the scaling property of $\mu$.
\end{remark}

Assumption \ref{noise} is satisfied by the following typical  noises, including the white noise, the Riesz kernel noise, fractional noise and a hybrid of these noises, etc.
\begin{example}[White noise]\label{white}
$\Upsilon(\xi)=(2\pi)^{-d}$, i.e., $\mu$ is a multiple of the Lebesgue measure. For this case, $\beta=d$ and the corresponding noise is called spatial white noise.
\end{example}
\begin{example}[Riesz kernel noise]\label{Riesz}
 $\Upsilon(\xi)=C_{d, \beta}|\xi|^{-(d-\beta)} $ for some $ \beta \in(0, d)$, where
\begin{align}\label{constant}
C_{d, \beta}=\pi^{-d / 2} 2^{-\beta} \frac{\Gamma((d-\beta) / 2)}{\Gamma(\beta / 2)} .
\end{align}
 We refer to this kind of noise as a spatial Riesz kernel noise with parameter $\beta$.
\end{example}
\begin{example}[Fractional noise]\label{fractionalnoise}
$\Upsilon(\xi)=\prod_{i=1}^{d}\beta_{H_{i}}\left|\xi_ {i}\right|^{1-2H_{i}}$  for some  $H_{1}, \ldots, H_{d} \in\left(\frac{1}{2}, 1\right) $, where $\beta_{H_i}=\frac{\alpha_{H_i}\Gamma(H_i-1/2)}{4^{1-H_i}\sqrt\pi\Gamma(1-H_i)}$. The function  $\Upsilon$  satisfies the scaling relation \eqref{scaling} with $ \beta=2d-2\sum_{i=1}^{d}H_{i} $. This noise corresponds to the fractional Brownian sheet with indices $H_{1}, \ldots, H_{d} $.
\end{example}

One can certainly construct examples by combining the previous three examples.
\begin{example}[Hybrid noise]
One can partition the  $d$  coordinates into  $k \in\{1, \cdots, d\} $ groups with  $d_{i} \geq 1 $ coordinates in the  $i$-th group such that  $d=d_{1}+\cdots+d_{k} $. Amount each group of coordinates, one imposes a Riesz kernel with parameter  $\beta_{i} \in\left(0, d_{i}\right]$, that is,  $\Upsilon(\xi)= \prod_{i=1}^{k}C_{d_i,\beta_i}\left|\xi^{(i)}\right|^{-\left(d_{i}-\beta_{i}\right)}$ where $ \xi^{(i)} $ denotes the set of coordinates of  $\xi$  in the  $i$-th group and $C_{d_i,\beta_i}$ is defined by \eqref{constant}. Then the scaling property  \eqref{scaling} is satisfied with $ \beta= \sum_{i=1}^{k} \beta_{i} \in(0, d] $. In particular, this  type of noise includes Examples \ref{white}--\ref{fractionalnoise}.
\end{example}

\subsection{Background of potential theory} \label{2.3}
In the last part of this section, we recall some notions of Hausdorff measure and capacity that will be used in Section \ref{section4}. Let $g:\br_+\rightarrow \br_{+}$ be a right continuous and monotonic increasing function which on $\left[0, \varepsilon_{0}\right] $ is strictly increasing with some $\varepsilon_{0}>0$ and $g(0)=0$. The $g$-Hausdorff measure (see e.g. \cite[Chapter 2]{MR1692618}) of $A  \in\mathscr B(\br^k)$ is defined by
$$
\mathcal{H}_{g}(A)=\liminf_{\varepsilon \downarrow 0}\left\{\sum_{i=1}^{\infty} g\left(2 r_{i}\right): A \subset \bigcup_{i=1}^{\infty} B\left(x_i,r_i\right), \sup _{i \geq 1} r_{i} \leq \varepsilon\right\},
$$
where $B(x, r)$ denotes the Euclidean open ball of radius  $r>0$ centered at  $x \in \br^k$. In this paper, we mainly use this notion referred to two cases: 
\begin{itemize}
\item[(i)] $g(\tau)=\tau^\lambda $ with $\lambda >0$. In this situation, $\mathcal H_g$ is the classical $\lambda $-dimensional Hausdorff measure which is usually denoted by $\mathcal H_{\lambda }$. 
\item[(ii)]  $g(\tau)=\tau^{\lambda_{1}}\check{q}(\tau)^{-\vartheta }$ with $q(\tau)=\tau^{\lambda_{2}}\left(\ln \frac{C}{\tau}\right)^{\delta}, \lambda_{1}, \lambda_{2}, \vartheta, \delta>0$. Here $C$ is large enough to ensure $\ln \left(\frac{C}{\tau}\right) \geq 1$.
\end{itemize}
To cover the $\lambda$-dimensional Hausdorff measure with $\lambda<0$, if $g(0)=\infty$, we set $\mathcal{H}_{g}(A)=\infty$.
 
A function $\mathfrak{g}: \br^k \longrightarrow \mathbb{R}_{+} \cup\{\infty\}$ is called a symmetric potential kernel if it is continuous on $\br^k \backslash\{0\}$, symmetric, $\mathfrak{g}(z)>0$ for all $z \neq 0$ and $\mathfrak{g}(0)=\infty$. The  $\mathfrak{g}$-capacity (see e.g. \cite[Appendix D]{MR1914748}) of $A  \in\mathscr B(\br^k)$ is defined by
$$
\operatorname{Cap}_{\mathfrak{g}}(A)=\left[\inf _{\nu \in \mathscr{P}(A)} \mathcal{E}_{\mathfrak{g}}(\nu)\right]^{-1},
$$
where $\mathcal{E}_{\mathfrak{g}}(\nu):=\iint_{\br^k \times \br^k} \mathfrak{g}(y-z) \nu(\ud y) \nu(\ud z)$ and  $\mathscr{P}(A)$ denotes the set of probability measures on $A$. As usual, we use the conventions that $\inf\emptyset:=\infty$ and $1/0:=\infty$.
$\operatorname{Cap}_\mathfrak{g}$ is called the capacity relative to a symmetric potential kernel $\mathfrak{g}$. By convention, we set $\operatorname{Cap}_{\mathfrak{g}}(A)=1$ for the case of $\mathfrak{g}(0) \in[0, \infty)$. In this article, we will use this notion with $\mathfrak{g}=1/g(|\cdot|)$, where $g$ is as in cases (i) and (ii) above. Note that, in the case (i), the $\mathfrak{g}$-capacity is the $\lambda$-dimensional Bessel--Riesz capacity, usually denoted by $\mathrm{Cap}_{\lambda}$.

\section{Regularity of the solution}\label{holdercon}
In this section, we are devoted to the regularity estimate of the solution \eqref{mildsol} of \eqref{FKE}. To be specific, we are interested in the behavior of the increments of the solution $u(t,x)$ w.r.t. the temporal variable $t$ and the spatial variable $x$, and give the optimal upper and lower bounds for these increments in mean square sense. The following proposition shows the joint regularity of $G\ast u_{0}$.
\begin{proposition}
For any $u_0\in L^1(\br^d)$, the function $G\ast u_{0}:[t_0,T]\times \br^d\to\br$ is Lipschitz continuous.
\end{proposition}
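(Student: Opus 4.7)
The plan is to fix $t_0>0$, uniformly bound the partial derivatives of the Green function $G$ on the strip $[t_0,T]\times\br^d$, and then move those bounds through the convolution in $y$ to obtain Lipschitz continuity of $G\ast u_0$.

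The first step is to differentiate the Fourier representation \eqref{green} under the integral sign, yielding formally
$$\nabla_x G(t,x)=\frac{1}{(2\pi)^d}\int_{\br^d} \ri\xi\,e^{\ri\langle x,\xi\rangle}e^{-t\Psi(\xi)}\,\ud\xi,\qquad \partial_t G(t,x)=-\frac{1}{(2\pi)^d}\int_{\br^d}\Psi(\xi)\,e^{\ri\langle x,\xi\rangle}e^{-t\Psi(\xi)}\,\ud\xi.$$
To justify the interchange by dominated convergence, I would check that $|\xi|\,e^{-t_0\Psi(\xi)}$ and $\Psi(\xi)\,e^{-t_0\Psi(\xi)}$ are integrable on $\br^d$. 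Since $\Psi(\xi)=|\xi|^\gamma(1+|\xi|^2)^{\alpha/2}$ with $\gamma>0$ and $\alpha\ge 0$, we have $\Psi(\xi)\sim|\xi|^{\gamma+\alpha}$ as $|\xi|\to\infty$, so the exponential weight produces super-polynomial decay at infinity; near the origin, $\Psi$ is continuous with $\Psi(0)=0$, so both $|\xi|e^{-t_0\Psi(\xi)}$ and $\Psi(\xi)e^{-t_0\Psi(\xi)}$ are bounded, and the extra $|\xi|^{d-1}$ from polar coordinates handles the remaining integrability. This yields finite constants $K_1,K_2$ (depending on $t_0,T,d,\alpha,\gamma$) with
$$\sup_{t\in[t_0,T],\,x\in\br^d}|\nabla_x G(t,x)|\le K_1,\qquad \sup_{t\in[t_0,T],\,x\in\br^d}|\partial_t G(t,x)|\le K_2.$$

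The second step is to combine these bounds with the mean value theorem. For any $(t,x),(s,y)\in[t_0,T]\times\br^d$, writing
$$G\ast u_0(t,x)-G\ast u_0(s,y)=\int_{\br^d}\bigl[G(t,x-z)-G(s,y-z)\bigr]u_0(z)\,\ud z$$
and applying the mean value theorem to the integrand (jointly in $(t,x)$) gives $|G(t,x-z)-G(s,y-z)|\le K_1|x-y|+K_2|t-s|$, uniformly in $z$. Integrating against $|u_0|\in L^1(\br^d)$ produces
$$|G\ast u_0(t,x)-G\ast u_0(s,y)|\le\bigl(K_1|x-y|+K_2|t-s|\bigr)\|u_0\|_{L^1(\br^d)},$$
which is the desired Lipschitz estimate.

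The only delicate point is the uniform integrability check giving $K_1$ and $K_2$; everything else is routine. The restriction to $t\ge t_0>0$ is crucial here—without it the factor $e^{-t\Psi(\xi)}$ degenerates and the derivatives of $G$ would not be bounded.
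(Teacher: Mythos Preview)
Your proof is correct and follows essentially the same route as the paper. The paper bounds the increments of $G$ directly via the elementary inequalities $|e^{\ri\langle x,\xi\rangle}-e^{\ri\langle y,\xi\rangle}|\le |\xi|\,|x-y|$ and $|e^{-t\Psi(\xi)}-e^{-s\Psi(\xi)}|\le \Psi(\xi)e^{-t_0\Psi(\xi)}|t-s|$ rather than formally differentiating under the integral sign and invoking the mean value theorem, but the two arguments amount to the same computation and rely on the same integrability of $|\xi|e^{-t_0\Psi(\xi)}$ and $\Psi(\xi)e^{-t_0\Psi(\xi)}$.
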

\begin{proof}
Using the definition of $G$ in \eqref{green}, we see that for any $t_0\le s\le t\le T$ and $x, y\in\br^d$,
\begin{align*} 
|G(t,x)-G(t,y)|=&~\frac{1}{(2\pi)^{d}}\left|\int_{\mathbb{R}^{d}}\left( e^{\ri\langle x, \xi\rangle} -e^{\ri\langle y, \xi\rangle} \right)\exp \left\{-t \Psi(\xi)\right\} \ud \xi\right|\\
\le &~\frac{|x-y|}{(2\pi)^{d}}\int_{\mathbb{R}^{d}} |\xi| \exp \left\{-t_0  \Psi(\xi)\right\} \ud \xi\\
\le&~C_{d,\alpha,\gamma,t_0}|x-y|
\end{align*}
and
\begin{align*} 
|G(t,y)-G(s,y)|=&~\frac{1}{(2\pi)^{d}}\left|\int_{\mathbb{R}^{d}}e^{\ri\langle y, \xi\rangle}\left(  e^{-t  \Psi(\xi)}-e^{-s \Psi(\xi)} \right)\ud \xi\right|\\
\le&~\frac{|t-s|}{(2\pi)^{d}}\int_{\mathbb{R}^{d}}  \Psi(\xi)e^{-t_0 \Psi(\xi)}\ud \xi \\
\le&~C_{d,\alpha,\gamma,t_0}|t-s|.
\end{align*}
Those together with the definition of $G\ast u_{0}$ and $u_0\in L^1(\br^d)$ yield the Lipschitz continuity of $G\ast u_{0}$.
\end{proof}

Recall that $\alpha_1=(\alpha+\gamma)H-\beta/2$. The next proposition analyzes the spatial increments of the random field $\bar{u}$ in mean square sense.
\begin{proposition}\label{srs}
Under Assumptions \ref{dalang-condition} and \ref{noise}, there exist two positive constants $C_{1}$ and $C_{2}$ such that \begin{align}\label{spaceH}
C_{1}q_1(|x-y|) \le \|\bar{u}(t, x)-\bar{u}(t, y)\|_2\le C_{2}q_1(|x-y|)
\end{align}
for all $x, y \in [-M,M]^d$ and $t \in[t_0, T]$, where $q_1:[0,2\sqrt d M]\to\br_+$ is defined as
$$q_1(\tau):=\left(\ln \frac{C_{d,M}}{\tau}\right)^{\frac{1}{2}\mathbb 1_{\{\alpha_1=1\}}}\tau^{\alpha_1 \wedge 1}$$
with $C_{d,M}=2e\sqrt d M$.
\end{proposition}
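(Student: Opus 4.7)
The strategy is to pass to the Fourier side via the spectral form of $\langle\cdot,\cdot\rangle_{\mathcal{H}}$ together with the isometry \eqref{isometry}, use Lemma \ref{estofh} to remove the $N_t$-factor uniformly in $t\in[t_0,T]$, and then exploit the scaling property of $\Upsilon$ to reduce the problem to a scalar oscillatory integral over $[0,\infty)$ that can be analyzed regime-by-regime.

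\textbf{Step 1 (Fourier reduction).} Using $\mathcal{F}G(t-s,z-\cdot)(\xi)=e^{-\ri\langle\xi,z\rangle}e^{-(t-s)\Psi(\xi)}$ together with the spectral form of $\langle\cdot,\cdot\rangle_{\mathcal{H}}$, and then performing the change of variables $(r,w)\mapsto(t-r,t-w)$, I would obtain
\begin{equation*}
\|\bar u(t,x)-\bar u(t,y)\|_2^{2}=\int_{\br^d}|e^{-\ri\langle\xi,x\rangle}-e^{-\ri\langle\xi,y\rangle}|^{2}\, N_t(\xi)\,\Upsilon(\xi)\,\ud\xi.
\end{equation*}
Because the two-sided bound \eqref{Ntbound} in Lemma \ref{estofh} is uniform in $t\in[t_0,T]$, the statement reduces to proving
\begin{equation*}
I(h):=\int_{\br^d}|e^{-\ri\langle\xi,h\rangle}-1|^{2}\frac{\Upsilon(\xi)}{(1+|\xi|^{2})^{(\alpha+\gamma)H}}\,\ud\xi\asymp q_1(|h|)^{2}
\end{equation*}
for $h=x-y\in[-2M,2M]^{d}\setminus\{0\}$.

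\textbf{Step 2 (Polar decomposition).} Passing to polar coordinates $\xi=r\omega$ and invoking the scaling relation $\Upsilon(r\omega)=r^{\beta-d}\Upsilon(\omega)$, I would rewrite
\begin{equation*}
I(h)=\int_0^\infty\frac{r^{\beta-1}}{(1+r^{2})^{(\alpha+\gamma)H}}\,g_e(r|h|)\,\ud r,\qquad g_e(s):=\int_{\mathbb{S}^{d-1}}|e^{-\ri s\langle\omega,e\rangle}-1|^{2}\Upsilon(\omega)\,\sigma(\ud\omega),
\end{equation*}
with $e:=h/|h|$. By Lemma \ref{Upsilon} the density $\Upsilon$ is integrable and bounded below by a positive constant on $\mathbb{S}^{d-1}$; combining this with $4\sin^2(\theta/2)\asymp\min(\theta^2,1)$ for $|\theta|\le 2\pi$, and exploiting the continuity and strict positivity of $e\mapsto\int_{\mathbb{S}^{d-1}}\langle\omega,e\rangle^{2}\Upsilon(\omega)\sigma(\ud\omega)$ on the compact sphere, one gets the direction-uniform two-sided bound $g_e(s)\asymp\min(s^{2},1)$.

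\textbf{Step 3 (Regime analysis).} Split $I(h)=I_{\text{low}}(h)+I_{\text{high}}(h)$ at $r=1$. The low-frequency part obeys $I_{\text{low}}(h)\le C|h|^{2}$ via $|e^{-\ri\langle\xi,h\rangle}-1|\le|\xi||h|$ and the integrability at $r=0$ provided by $\beta>0$. For the high-frequency part, $(1+r^{2})^{(\alpha+\gamma)H}\asymp r^{2(\alpha+\gamma)H}$ on $r\ge 1$, and the substitution $s=r|h|$ gives
\begin{equation*}
I_{\text{high}}(h)\asymp|h|^{2\alpha_1}\int_{|h|}^\infty s^{-2\alpha_1-1}g_e(s)\,\ud s.
\end{equation*}
Inserting $g_e(s)\asymp\min(s^{2},1)$ and splitting the $s$-integral at $s=1$ leads to
\begin{equation*}
\int_{|h|}^\infty s^{-2\alpha_1-1}\min(s^{2},1)\,\ud s\asymp
\begin{cases}1,&\alpha_1\in(0,1),\\ \ln(1/|h|),&\alpha_1=1,\\ |h|^{2-2\alpha_1},&\alpha_1>1,\end{cases}
\end{equation*}
so $I_{\text{high}}(h)$ is of order $|h|^{2\alpha_1}$, $|h|^{2}\ln(1/|h|)$ and $|h|^{2}$ respectively. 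Adding $I_{\text{low}}(h)\le C|h|^{2}$ and using $|h|^{2}\le q_1(|h|)^{2}$ in every case, I would match the three bounds with $q_1(|h|)^{2}$; the specific constant $C_{d,M}=2e\sqrt{d}M$ only serves to ensure $\ln(C_{d,M}/|h|)\ge 1$ for all $h\in[-2M,2M]^{d}\setminus\{0\}$, which absorbs the various multiplicative constants.

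\textbf{Main obstacle.} The critical case $\alpha_1=1$ is the delicate one: the logarithmic factor arises solely as the boundary contribution $\int_{|h|}^{1}s^{-1}\ud s$, and both the upper and lower bounds have to match this asymptotic. A second subtlety is the \emph{uniform} lower bound $g_e(s)\gtrsim s^{2}$ in the spatial direction $e$, which is obtained through a compactness argument on $\mathbb{S}^{d-1}$ and the strict positivity provided by Lemma \ref{Upsilon}(i)-(ii).
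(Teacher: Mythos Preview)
Your overall strategy---Fourier reduction via the isometry, removal of $N_t$ through Lemma \ref{estofh}, polar decomposition using the scaling of $\Upsilon$, and a regime split of the radial integral---is essentially the same as the paper's. The upper bound and the small-$|h|$ lower bound go through as you describe.

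There is, however, a genuine gap in Step 2: the two-sided bound $g_e(s)\asymp\min(s^{2},1)$ is \emph{false} in dimension $d=1$. In that case $\mathbb{S}^{0}=\{-1,1\}$ and $g_e(s)=(\Upsilon(1)+\Upsilon(-1))\cdot 4\sin^{2}(s/2)$, which vanishes at $s=2k\pi$. Even for $d\ge 2$ your justification does not address $s$ large: the pointwise inequality $4\sin^{2}(\theta/2)\asymp\min(\theta^{2},1)$ already fails at $\theta=2\pi$, and the compactness argument on $e\mapsto\int_{\mathbb{S}^{d-1}}\langle\omega,e\rangle^{2}\Upsilon(\omega)\sigma(\ud\omega)$ only controls the small-$s$ regime. (For $d\ge 2$ one can rescue the claim via Riemann--Lebesgue after bounding $\Upsilon$ below by a constant on the sphere, but this is not what you wrote, and it still leaves $d=1$ open.)

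The paper sidesteps this by splitting the lower bound into two regimes in $|x-y|$ rather than in $s$. For $|x-y|<1-\epsilon$ it restricts the radial integral to $r\in[|x-y|,1]$ after the change of variables (exactly your $s\in[|h|,1]$ part), where $\sin^{2}(\tfrac{r}{2}\langle z,e\rangle)\ge c\,r^{2}\langle z,e\rangle^{2}$ is legitimate; the sphere integral $\int_{\mathbb{S}^{d-1}}\langle z,e\rangle^{2}\sigma(\ud z)=\omega_{d}$ is then computed explicitly via Green's formula, giving a lower bound independent of $e$ without any continuity/compactness in $e$. For $|x-y|\ge 1-\epsilon$ a separate compactness argument is used: after rescaling $\eta=|x_{k}-y_{k}|\xi$ (with $k=\arg\max_{i}|x_{i}-y_{i}|$), the lower bound reduces to the strict positivity of a continuous function of the bounded parameters $\theta_{j}=(x_{j}-y_{j})/|x_{k}-y_{k}|$ over a compact box. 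This decomposition is what makes the proof work uniformly in $d\ge 1$; your approach can be repaired along the same lines by dropping the $s\ge 1$ portion of the lower-bound integral when $|h|$ is small and treating $|h|$ bounded away from zero by a direct compactness/positivity argument.
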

\begin{proof}
By \eqref{fouriertrans}, we get
\begin{align*}
&~\ee|\bar{u}(t, x)-\bar{u}(t, y)|^{2}=\|G(t-\cdot,x-\cdot)-G(t-\cdot,y-\cdot)\|_{\mathcal H}^2\\
=&~\alpha_{H}\int_{0}^{t} \int_{0}^{t} \ud r\ud w|r-w|^{2H-2}   \int_{\mathbb{R}^{d}} \mu(\ud \xi)e^{-(2t-r-w)\Psi(\xi)}|e^{-\mathrm{i}\langle x, \xi\rangle}-e^{-\mathrm{i}\langle y, \xi\rangle}|^2\\
=& ~2\int_{\mathbb{R}^{d}} \mu(\ud \xi)N_t(\xi)(1-\cos\langle x-y, \xi\rangle).
\end{align*}

\textbf{Lower bound}: The proof of the lower bound of \eqref{spaceH} is divided into two cases.

\textbf{Case (i):} $|x-y|< 1-\epsilon$ with sufficiently small $\epsilon>0$.
Using \eqref{Ntbound} and polar coordinates $\xi=\rho z$ and making the change of variables $r=\rho|x-y|$, we derive
\begin{align}\label{lowerb1}
&~\mathbb{E}|\bar{u}(t, x)-\bar{u}(t, y)|^{2} \notag\\
\ge&~ C_{H, t_0} \int_{|\xi| \geq 1} \mu(\ud \xi)\left(\frac{1}{1+|\xi|^{2}}\right)^{(\alpha+\gamma) H}\sin^2\left(\frac{1}{2} \langle x-y,\xi\rangle\right) \notag\\
\ge&~ C_{H, t_0,\alpha,\gamma} \int_{|\xi| \geq 1} \mu(\ud \xi) |\xi|^{-2(\alpha+\gamma) H}\sin^2\left(\frac{1}{2} \langle x-y,\xi\rangle\right)\notag\\
\ge &~ C_{H, t_0,\alpha,\gamma} |x-y|^{2\alpha_1} \int_{\mathbb{S}^{d-1}} \int_{|x-y|}^{1} r^{-2\alpha_1-1}\sin^2\left(\frac{r}{2}\Big\langle z, \frac{x-y}{|x-y|}\Big\rangle\right) \Upsilon(z)\ud r \sigma(\ud z)\notag\\
\ge&~ C_{H, t_0,\alpha,\gamma}\frac{\sin^2 1}{4} |x-y|^{2\alpha_1}  \int_{|x-y|}^{1} r^{-2\alpha_1+1}\ud r\int_{\mathbb{S}^{d-1}} \Big\langle z, \frac{x-y}{|x-y|}\Big\rangle^2 \sigma(\ud z),
\end{align}
where the last step used Lemma \ref{Upsilon} (ii) and the fact that
$\theta\mapsto\frac{\sin \theta}{\theta}$ is decreasing  on $(0, 1)$.
 For any
  $c_0\in\left(0,\frac{\ln\frac{1}{1-\epsilon}}{\ln\frac{C_{d,M}}{1-\epsilon}}\right)$ and
  $c_1\in(0,1-(1-\epsilon)^{2\alpha_1-2})$, we have
 \begin{align*}
\int_{|x-y|}^1r^{-2\alpha_1+1}\ud r=
\left\{
\begin{array}{lll}
\ln \frac{1}{|x-y|}\ge c_0\ln\frac{C_{d,M}}{|x-y|}\qquad\quad\quad~~~~\,\quad\text{if}\quad\,\,\alpha_1=1,\\
 \frac{|x-y|^{2-2\alpha_1}-1}{2\alpha_1-2}\ge
 \left\{\begin{array}{lll}
 \frac{c_1|x-y|^{2-2\alpha_1}}{2\alpha_1-2} & \text { if } & \alpha_1>1, \\
\frac{1-(1-\epsilon)^{2-2\alpha_1}}{2-2\alpha_1}& \text { if } & \alpha_1\in(0,1).
\end{array}\right.
\end{array}
\right.
\end{align*}
Next, we estimate the integral w.r.t. $z$ in \eqref{lowerb1}. Let $F(z)=\Big\langle z, \frac{x-y}{|x-y|}\Big\rangle\frac{x-y}{|x-y|}$ be an $\br^d$-valued function. Availing oneself of Green’s formula, we obtain 
\begin{align*}
\int_{\mathbb{S}^{d-1}} \Big\langle z, \frac{x-y}{|x-y|}\Big\rangle^{2}\sigma(\ud z)=~\int_{\mathbb{S}^{d-1}} \langle z, F(z)\rangle\sigma(\ud z)=~\int_{|z|\le 1}\nabla\cdot F(z)\ud z=\omega_d
\end{align*}
with $\omega_d=\frac{2\pi^{\frac{d}{2}}}{d\Gamma(\frac{d}{2})}$ being the volume of the unit ball in $\br^d$.
 
Substituting those into \eqref{lowerb1} gives the desired lower bound for $|x-y|<1-\epsilon$.

\textbf{Case (ii): $|x-y|\ge1-\epsilon$.} Let $k=\arg\max_i|x_i-y_i|$.
 Using \eqref{Ntbound} and the fact $\frac{1}{1+\theta^2}\ge\frac{1}{(1+r^2)\theta^2}$ for all $|\theta|\ge\frac{1}{r}$, we derive
\begin{align*}
&~\ee|\bar{u}(t, x)-\bar{u}(t, y)|^{2}\\
\ge &~C_{t_0,H}\int_{|\xi|\ge |x_k-y_k|^{-1}} \mu(\ud \xi)\left(\frac{1}{1+|\xi|^2}\right)^{(\alpha+\gamma)H}(1-\cos\langle x-y, \xi\rangle)\\
\ge&~\frac{C_{t_0,H}}{(1+|x_k-y_k|^2)^{(\alpha+\gamma)H}}\int_{|\xi|\ge |x_k-y_k|^{-1}} \mu(\ud \xi)|\xi|^{-2(\alpha+\gamma)H}(1-\cos\langle x-y, \xi\rangle).
\end{align*}
By the change of variables $\eta=|x_k-y_k|\xi$, we have
\begin{align*}
&~\ee|\bar{u}(t, x)-\bar{u}(t, y)|^{2}\\
\ge &~\frac{C_{t_0,H}|x_k-y_k|^{2\alpha_1}}{(1+|x_k-y_k|^2)^{(\alpha+\gamma)H}}\int_{|\eta|\ge 1}\mu(\ud \eta)\frac{1-\cos\big(\sum_{j\ne k}\frac{x_j-y_j}{|x_k-y_k|}\eta_j+\mathrm{sgn}(x_k-y_k)\eta_k\big)}{|\eta|^{2(\alpha+\gamma)H}}\\
\ge&~ C_{t_0,H,M}|x-y|^{2\alpha_1},
\end{align*}
where in the last line we used $|x-y|\le \sqrt d|x_k-y_k|$ and the fact 
$$
\inf_{\max_{j\ne k}|\theta_j|\le1}\int_{|\eta|\ge 1}\frac{1-\cos(\sum_{j\ne k}\theta_j\eta_j\pm \eta_k)}{|\eta|^{2(\alpha+\gamma)H}}\mu(\ud \eta)>0,
$$
since the infimum is taken over a compact set for a positive and continuous function. For $|x-y|\ge1-\epsilon$, we have $|x-y|^{2\alpha_1}\ge |x-y|^{2}(1-\epsilon)^{2(\alpha_1-1)}$ for $\alpha_1>1$, and  $|x-y|^{2\alpha_1}\ge \left(\ln \frac{C_{d,M}}{1-\epsilon}\right)^{-1}|x-y|^2\ln \frac{C_{d,M}}{|x-y|}$ for $\alpha_1=1$, which implies the desired lower bound for $|x-y|\ge 1-\epsilon$.

\textbf{Upper bound}: To show the upper bound, using \eqref{Ntbound} again, we obtain
\begin{align*}
\ee|\bar{u}(t, x)-\bar{u}(t, y)|^{2}
\le C_{H}(t^{2H}+1)\int_{\br^d} \mu(\ud \xi)\left(\frac{1}{1+|\xi|^2}\right)^{(\alpha+\gamma)H}(1-\cos\langle x-y, \xi\rangle).
\end{align*}
Considering the polar coordinate transformation $\xi=\rho z$ and using the change of variables $r=\rho|x-y|$, we obtain
\begin{align}\label{upperbound}
&~\ee|\bar{u}(t, x)-\bar{u}(t, y)|^{2}\notag\\
\le &~C_{H, T}|x-y|^{2\alpha_1} \int_{\mathbb{S}^{d-1}} \int_{0}^{\infty} r^{\beta-1}\left(\frac{1}{|x-y|^{2}+r^{2}}\right)^{(\alpha+\gamma) H}\Upsilon(z)\notag\\ 
&~\qquad\qquad\qquad\qquad\qquad\qquad\times\left(1-\cos \left(r\Big\langle z, \frac{x-y}{|x-y|}\Big\rangle\right)\right) \ud r \sigma(\ud z).
\end{align}
We divide the estimate of the integral w.r.t. $r$ in \eqref{upperbound}  into two cases as follows.

{\bf Case 1:} $\alpha_1> 1$. By the inequality $1-\cos \theta\le \frac{\theta^2}{2}$ for $\theta\in\br$, we obtain that for $z\in \mathbb S^{d-1}$,
\begin{align*}
 &~\int_{0}^{\infty} r^{\beta-1}\left(\frac{1}{|x-y|^{2}+r^{2}}\right)^{(\alpha+\gamma) H} \left(1-\cos \left(r\Big\langle z, \frac{x-y}{|x-y|}\Big\rangle\right)\right) \ud r\\
  \le&~\frac{1}{2}\int_{0}^{\infty} r^{\beta+1}\left(\frac{1}{|x-y|^{2}+r^{2}}\right)^{(\alpha+\gamma) H}\ud r\\
 \le&~\frac{1}{2}|x-y|^{-2\alpha_1}\int_{0}^{|x-y|} r \ud r+\frac{1}{2}\int_{|x-y|}^{\infty} r^{\beta+1}\left(\frac{1}{r^{2}}\right)^{(\alpha+\gamma) H} \ud r\\
 \le&~C_{H,\alpha,\gamma}|x-y|^{2(1-\alpha_1)}.
\end{align*}

{\bf Case 2:} $\alpha_1\in(0, 1]$. 
By the inequality $1-\cos \theta\le 2\wedge \frac{\theta^2}{2}$ for $\theta\in\br$, we obtain 
\begin{align*}
 &~\int_{0}^{\infty} r^{\beta-1}\left(\frac{1}{|x-y|^{2}+r^{2}}\right)^{(\alpha+\gamma) H} \left(1-\cos \left(r\Big\langle z, \frac{x-y}{|x-y|}\Big\rangle\right)\right) \ud r\\
   \le&~\frac{1}{2}\int_{0}^{1} r^{\beta+1}\left(\frac{1}{|x-y|^{2}+r^{2}}\right)^{(\alpha+\gamma) H}\ud r+2\int_{1}^{\infty} r^{\beta-1-2(\alpha+\gamma) H} \ud r\\
    \le&~\frac{1}{2}\int_{0}^{1} r^{\beta+1}\left(\frac{1}{|x-y|^{2}+r^{2}}\right)^{(\alpha+\gamma) H}\ud r+\alpha_1^{-1}
\end{align*} 
for any $z\in \mathbb S^{d-1}$. Notice that for $\alpha_1=1$,
  \begin{align*}
  &~\int_{0}^{1} r^{\beta+1}\left(\frac{1}{|x-y|^{2}+r^{2}}\right)^{(\alpha+\gamma) H}\ud r\\
  \le&~\int_{0}^{|x-y|} r^{\beta+1}|x-y|^{-2(\alpha+\gamma) H}\ud r+\int_{|x-y|}^{1} r^{\beta+1-2(\alpha+\gamma) H}\ud r\\
  \le&~|x-y|^{-2}\int_{0}^{|x-y|} r \ud r+\int_{|x-y|}^1 r^{-1}\ud r\le \frac{1}{2}+\ln\frac{1}{|x-y|},\quad\text{if}\quad|x-y|< 1,
\end{align*}  
and 
  \begin{align*}
  &~\int_{0}^{1} r^{\beta+1}\left(\frac{1}{|x-y|^{2}+r^{2}}\right)^{(\alpha+\gamma) H}\ud r\\
 \le&~ \int_{0}^{1} r^{\beta+1}|x-y|^{-2(\alpha+\gamma) H}\ud r\le \frac{1}{2},\quad\text{if}\quad|x-y|\ge 1.
\end{align*}
While for $\alpha_1\in(0,1)$,
\begin{align*}
&\int_{0}^{1} r^{\beta+1}\left(\frac{1}{|x-y|^{2}+r^{2}}\right)^{(\alpha+\gamma) H}\ud r\le \int_{0}^{1} r^{\beta+1-2(\alpha+\gamma) H} \ud r<\infty.
\end{align*}

Substituting the estimates in Case 1 and Case 2 into \eqref{upperbound} and using Lemma \ref{Upsilon} (iii),  then the upper bound of the spatial increments has been proved.
\end{proof}

The next proposition shows that the optimal mean square H\"{o}lder exponent in temporal direction of $\bar u$ is $\alpha_2:=\frac{\alpha_1}{\alpha+\gamma}$.. 
\begin{proposition}\label{srt}
Under Assumptions \ref{dalang-condition} and \ref{noise},  there exist two positive constants $C_{3}$ and $C_{4}$ such that 
\begin{align}\label{timeH}
C_3|s-t|^{\alpha_2}\le\|\bar{u}(t, x)-\bar{u}(s, x)\|_2 \le C_4|s-t|^{\alpha_2}\quad \forall ~t, s \in\left[t_{0}, T\right], x \in \br^{d}.
\end{align}
Moreover, for all $t, s \in[t_0, T], x,y \in \br^{d}$,
\begin{align}\label{timeHS}
\|\bar{u}(t, x)-\bar{u}(s, y)\|_2\ge C_3|s-t|^{\alpha_2}.
\end{align}
\end{proposition}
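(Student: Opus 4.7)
The plan is to pass to the Fourier/spectral representation of the $\mathcal{H}$-inner product and then perform an anisotropic rescaling in $(\tau,\xi)$. Assume WLOG $s\le t$ and $t-s\le 1$ (larger gaps are absorbed into the constants using Theorem~\ref{wellpose}). A direct computation gives
\[
\mathcal{F}\bigl(G(u-\cdot,x-\cdot)\mathbb{1}_{[0,u]}\bigr)(\tau,\xi)=e^{-\ri\langle x,\xi\rangle}\frac{e^{-\ri\tau u}-e^{-u\Psi(\xi)}}{\Psi(\xi)-\ri\tau},
\]
so the $\beta_H$-form of $\langle\cdot,\cdot\rangle_{\mathcal{H}}$ yields the manifestly nonnegative representation
\[
\|\bar{u}(t,x)-\bar{u}(s,x)\|_2^2=\beta_H\int_{\br}\int_{\br^d}\frac{|\tau|^{1-2H}\bigl|(e^{-\ri\tau t}-e^{-\ri\tau s})-(e^{-t\Psi(\xi)}-e^{-s\Psi(\xi)})\bigr|^2}{\tau^2+\Psi(\xi)^2}\mu(\ud\xi)\ud\tau.
\]
Substituting $\tau=(t-s)^{-1}\eta$ and $\xi=(t-s)^{-1/(\alpha+\gamma)}\zeta$ and invoking the scaling property $\mu(cA)=c^\beta\mu(A)$ from Assumption~\ref{noise} extracts the prefactor $(t-s)^{2H-\beta/(\alpha+\gamma)}=(t-s)^{2\alpha_2}$, while the rescaled symbol becomes $\tilde{\Psi}_{t-s}(\zeta):=(t-s)\Psi\bigl((t-s)^{-1/(\alpha+\gamma)}\zeta\bigr)=|\zeta|^\gamma\bigl(|\zeta|^2+(t-s)^{2/(\alpha+\gamma)}\bigr)^{\alpha/2}$, pinched between $|\zeta|^{\alpha+\gamma}$ and $\Psi(\zeta)$ for $t-s\le 1$.

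For the \emph{upper bound} in \eqref{timeH}, I would apply $|A-B|^2\le 2|A|^2+2|B|^2$ with $A=e^{-\ri\tau t}-e^{-\ri\tau s}$ and $B=e^{-t\Psi}-e^{-s\Psi}$; after rescaling, $|A|^2=4\sin^2(\eta/2)$ and $|B|^2\le 1\wedge\tilde{\Psi}_{t-s}(\zeta)^2$. Routine estimates reduce both $\eta$-integrals to a quantity of order $1\wedge\tilde{\Psi}_{t-s}(\zeta)^{-2H}$, and the subsequent $\zeta$-integral is convergent by Assumption~\ref{dalang-condition} (using $\tilde{\Psi}_{t-s}(\zeta)\ge|\zeta|^{\alpha+\gamma}\wedge 1$) together with Lemma~\ref{Upsilon}(iii) for the angular integration of $\Upsilon$.

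For the \emph{lower bound} in \eqref{timeH}, I would restrict the integration to the oscillatory region $R=[\pi/2,3\pi/2]\times\{|\zeta|\le\rho_0\}$, with $\rho_0>0$ small enough that $\tilde{\Psi}_{t-s}(\zeta)\le 1/2$ on $R$ for all $t-s\le 1$. On $R$, $|A|^2\ge 2$ and $|B|\le 1-e^{-\tilde{\Psi}_{t-s}}\le\tilde{\Psi}_{t-s}\le 1/2$, so $|A-B|\ge\sqrt{2}-1/2>0$, while $\eta^2+\tilde{\Psi}_{t-s}(\zeta)^2$ is bounded above by a constant. Since $\mu(\{|\zeta|\le\rho_0\})=C\rho_0^\beta>0$ by the scaling of $\mu$, the restricted integral contributes a strictly positive constant independent of $t-s$, producing the matching lower bound $\|\bar{u}(t,x)-\bar{u}(s,x)\|_2^2\ge C_3^2(t-s)^{2\alpha_2}$.

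For the \emph{joint lower bound} \eqref{timeHS}, I would exploit a covariance-monotonicity argument. By the spatial stationarity of $\bar{u}$, $\ee|\bar{u}(s,y)|^2=\ee|\bar{u}(s,x)|^2$, so
\[
\|\bar{u}(t,x)-\bar{u}(s,y)\|_2^2-\|\bar{u}(t,x)-\bar{u}(s,x)\|_2^2=2\bigl[\mathrm{Cov}(\bar{u}(t,x),\bar{u}(s,x))-\mathrm{Cov}(\bar{u}(t,x),\bar{u}(s,y))\bigr].
\]
Writing each covariance through the first form of $\langle\cdot,\cdot\rangle_\mathcal{H}$ and using symmetry of $\mu$, the right-hand side equals the nonnegative quantity
\[
2\alpha_H\int_0^t\int_0^s|r-w|^{2H-2}\ud r\ud w\int_{\br^d}(1-\cos\langle x-y,\xi\rangle)e^{-(t-r+s-w)\Psi(\xi)}\mu(\ud\xi)\ge 0,
\]
hence $\|\bar{u}(t,x)-\bar{u}(s,y)\|_2\ge\|\bar{u}(t,x)-\bar{u}(s,x)\|_2\ge C_3|t-s|^{\alpha_2}$. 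The \emph{main obstacle} will be the time-only lower bound: one must select a rescaled region on which the oscillatory factor $|A|^2$ strictly dominates $|B|^2$ with positive $t-s$-uniform $\mu$-mass, despite the nonhomogeneous factor $(|\zeta|^2+(t-s)^{2/(\alpha+\gamma)})^{\alpha/2}$ from the Bessel operator $(I-\Delta)^{\alpha/2}$ which prevents a clean homogeneous rescaling of $\Psi$.
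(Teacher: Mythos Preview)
Your argument is correct and takes a genuinely different route from the paper. For the \emph{upper bound}, the paper stays in the $\alpha_H$-representation and decomposes $\ee|\bar u(t,x)-\bar u(s,x)|^2=R_1+R_2+R_3$, discards $R_3\le 0$, and bounds $R_1,R_2$ separately via Lemma~\ref{estofh} and a case split on $\{\Psi(\xi)\lessgtr|t-s|^{-1}\}$; your single anisotropic rescaling in the $\beta_H$-representation followed by $|A-B|^2\le 2|A|^2+2|B|^2$ achieves the same in one pass, with the pinching $|\zeta|^{\alpha+\gamma}\le\tilde\Psi_{t-s}(\zeta)\le\Psi(\zeta)$ absorbing the Bessel inhomogeneity. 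For the \emph{lower bound} the contrast is sharper: the paper introduces auxiliary processes $U$ (a two-sided-in-time extension) and $Y:=U-\bar u$, proves $\ee|\Delta U|^2\ge C|t-s|^{2\alpha_2}$ and $\ee|\Delta Y|^2\le C|t-s|^2$, combines them for small $|t-s|$, and then invokes a continuity/compactness argument on $\{|t-s|\ge\varepsilon\}$. Your direct localisation to $[\pi/2,3\pi/2]\times\{|\zeta|\le\rho_0\}$ after rescaling bypasses both the auxiliary processes and the compactness step, which is more economical. For \eqref{timeHS} the two arguments coincide: the paper's three-term expansion is exactly your covariance-monotonicity identity written out.

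One small gap: you appeal to Theorem~\ref{wellpose} to handle $|t-s|>1$, but that theorem only supplies \emph{upper} bounds on moments and cannot yield the lower bound in \eqref{timeH}. The fix is immediate within your own framework: since always $|t-s|\le T$, simply choose $\rho_0$ so that $\rho_0^{\gamma}(\rho_0^2+T^{2/(\alpha+\gamma)})^{\alpha/2}\le\tfrac12$; then $\tilde\Psi_{t-s}(\zeta)\le\tfrac12$ on $\{|\zeta|\le\rho_0\}$ for \emph{all} $t-s\in(0,T]$, and your localised lower bound covers the full range without any separate large-gap argument.
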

\begin{proof}
Without loss of generality, let $t_0\le s< t\le T$. Then we have
\begin{align*}
&~\ee|\bar{u}(t, x)-\bar{u}(s, x)|^{2} \\
=\;&~ \alpha_{H}\int_{0}^{t} \ud r\int_{0}^{t}  \ud w|r-w|^{2 H-2} \int_{\mathbb{R}^{d}} \mu(\ud \xi) e^{-(t-r)\Psi(\xi)} e^{-(t-w)\Psi(\xi)} \\ 
&-2 \alpha_{H}\int_{0}^{t} \ud r\int_{0}^{s}  \ud w|r-w|^{2 H-2} \int_{\br^d} \mu(\ud \xi) e^{-(t-r)\Psi(\xi)} e^{-(s-w)\Psi(\xi)} \\ 
&+\alpha_{H} \int_{0}^{s}\ud r \int_{0}^{s}  \ud w|r-w|^{2 H-2} \int_{\mathbb{R}^{d}} \mu(\ud \xi) e^{-(s-r)\Psi(\xi)} e^{-(s-w)\Psi(\xi)} \\
=\;&~ \alpha_{H} \int_{s}^{t} \ud r \int_{s}^{t} \ud w|r-w|^{2 H-2} \int_{\br^{d}} \mu(\ud \xi) e^{-(t-r)\Psi(\xi)} e^{-(t-w)\Psi(\xi)} \\ 
&+\alpha_{H} \int_{0}^{s} \ud r \int_{0}^{s} \ud w|r-w|^{2 H-2} \int_{\mathbb{R}^{d}} \mu(\ud \xi) e^{(r+w)\Psi(\xi)}\big(e^{-t\Psi(\xi)}-e^{-s\Psi(\xi)}\big)^2\\ 
&+2 \alpha_{H} \int_{s}^{t} \ud r \int_{0}^{s} \ud w|r-w|^{2 H-2} \int_{\br^d} \mu(\ud \xi)e^{-(t-r)\Psi(\xi)} \big(e^{-(t-w)\Psi(\xi)}-e^{-(s-w)\Psi(\xi)}\big) \\
=:&~ R_1(t, s)+R_2(t, s)+R_3(t, s) . 
\end{align*}

\textbf{Upper bound}:
Note that for $t\ge s$, $R_3(t, s)\le 0.$
Using the change of variables $(r_1,w_1)=(\frac{t-r}{t-s},\frac{t-w}{t-s})$ and $\eta=(t-s)^{\frac{1}{\gamma+\alpha}}\xi$ in turn, we obtain
\begin{align*}
R_1(t, s) =& ~\alpha_{H} |t-s|^{2H}\int_{0}^{1} \ud r_1 \int_{0}^{1} \ud w_1|r_1-w_1|^{2 H-2} \int_{\br^{d}} \mu(\ud \xi) e^{-(r_1+w_1)(t-s)\Psi(\xi)} \\ 
\le&~ \alpha_{H} |t-s|^{2H}\int_{0}^{1} \ud r_1 \int_{0}^{1} \ud w_1|r_1-w_1|^{2 H-2} \int_{\br^{d}} \mu(\ud \xi) e^{-(r_1+w_1)(t-s)|\xi|^{\gamma+\alpha}}\\ 
 =&~  |t-s|^{2\alpha_2}\int_{\mathbb{R}^{d}} N_1^*(\eta)\mu(\ud \eta) .
\end{align*}
In view of \eqref{Nt*bound} and \eqref{muhyp}, we derive
\begin{align*}
R_1(t, s)\le ~C_{H,\gamma,\alpha,d,T} |t-s|^{2\alpha_2}.
\end{align*}

By means of the change of variables $(r_1,w_1)=\big((s-r)\Psi(\xi),(s-w)\Psi(\xi)\big)$, we obtain
\begin{align*}
&~R_2(t, s)\\
= & ~\alpha_{H} \int_{0}^{s\Psi(\xi)} \ud r_1 \int_{0}^{s\Psi(\xi)} \ud w_1 |r_1-w_1|^{2 H-2}e^{-(r_1+w_1)} \int_{\mathbb{R}^{d}} \mu(\ud \xi)\Psi(\xi)^{-2H}\big(e^{-(t-s)\Psi(\xi)}-1\big)^2\\
\le & ~\alpha_{H} \int_{0}^{\infty} \ud r_1 \int_{0}^{\infty} \ud w_1 |r_1-w_1|^{2 H-2}e^{-(r_1+w_1)} \int_{\mathbb{R}^{d}} \mu(\ud \xi)\Psi(\xi)^{-2H}\big(e^{-(t-s)\Psi(\xi)}-1\big)^2.
\end{align*}
Note that for any $x>0$, if $(1+x^2)^{\frac{\alpha+\gamma}{2}}>x^{\gamma}(1+x^2)^{\frac{\alpha}{2}}>a>0$, then $x>\sqrt{a^{\frac{2}{\alpha+\gamma}}-1}>\frac{1}{2}a^{\frac{1}{\alpha+\gamma}}$ provided that $a>(\frac{4}{3})^{\frac{\alpha+\gamma}{2}}$. And if
$x^{\alpha+\gamma}<x^{\gamma}(1+x^2)^{\frac{\alpha}{2}}<a$, then $x<a^{\frac{1}{\alpha+\gamma}}$. Therefore, for the case of $|t-s|<(\frac{3}{4})^{\frac{\alpha+\gamma}{2}}$, by the inequality $1-e^{-\theta}\le 1\wedge \theta^{H}$ for $\theta>0$ and the scaling property \eqref{scaling}, 
\begin{align*}
&~\int_{\mathbb{R}^{d}} \Psi(\xi)^{-2H}\big(e^{-(t-s)\Psi(\xi)}-1\big)^2\mu(\ud \xi)\\
\le &~\int_{\Psi(\xi)\le |t-s|^{-1}} \Psi(\xi)^{-2H}|t-s|^{2H}\Psi(\xi)^{2H}\mu(\ud \xi)+\int_{\Psi(\xi)\ge |t-s|^{-1} }\Psi(\xi)^{-2H}\mu(\ud \xi)\\
\le &~|t-s|^{2H}\int_{|\xi|\le |t-s|^{-\frac{1}{\alpha+\gamma}}}\mu(\ud \xi)+\int_{|\xi|\ge \frac{1}{2}|t-s|^{-\frac{1}{\alpha+\gamma}}}|\xi|^{-2H(\alpha+\gamma)}\mu(\ud \xi)\\
=&~|t-s|^{2\alpha_2}\int_{|\eta|\le 1}\mu(\ud \eta)+|t-s|^{2\alpha_2}\int_{|\eta|\ge \frac{1}{2}}|\eta|^{-2H(\alpha+\gamma)}\mu(\ud \eta)\\
\le&~C|t-s|^{2\alpha_2}.
\end{align*}
This and 
$
 \int_{0}^{\infty} \ud r \int_{0}^{\infty} \ud w~ |r-w|^{2 H-2}e^{-r}e^{-w} =\Gamma(2H-1)
$
allow us to conclude that $R_2(t, s)\le C|t-s|^{2\alpha_2}$, which completes the proof of the upper bound for the case of $|t-s|<(\frac{3}{4})^{\frac{\alpha+\gamma}{2}}$.
For the case of $|t-s|\ge(\frac{3}{4})^{\frac{\alpha+\gamma}{2}}$, by using Theorem \ref{wellpose}, we have
\begin{align*}
\ee|\bar{u}(t, x)-\bar{u}(s, x)|^{2}\le 4\sup_{(r,z)\in[0,T]\times \br^d}\ee|\bar{u}(r,z)|^2\le C|t-s|^{2\alpha_2}.
\end{align*}

\textbf{Lower bound}:
To obtain the lower bound, we introduce  two random fields $U= \{U(r,z)\}_{(r,z)\in\br_+\times \br^d}$ and $Y=\{Y(r,z)\}_{(r,z)\in\br_+\times \br^d}$ defined by
\begin{align*}
U(r,z)=& \int_{-\infty}^{0} \int_{\mathbb{R}^{d}}(G(r-\tau, z-\xi)-G(-\tau, z-\xi)) W^{H,\beta}(\ud \tau, \ud \xi) \\
&+\int_{0}^{r} \int_{\mathbb{R}^{d}} G(r-\tau, z-
\xi) W^{H,\beta}(\ud \tau, \ud \xi)\\
=&\int_{\mathbb{R}} \int_{\mathbb{R}^{d}}\left(G\left((r-\tau)_{+}, z-\xi\right)-G\left((-\tau)_{+}, z-\xi\right)\right) W^{H,\beta}(\ud \tau, \ud \xi)
\end{align*}
and
\begin{align*}
Y(r,z)=& \int_{-\infty}^{0} \int_{\mathbb{R}^{d}}(G(r-\tau, z-\xi)-G(-\tau, z-\xi)) W^{H,\beta}(\ud \tau, \ud \xi), 
\end{align*}
where $G\left(r, \cdot\right)\equiv 0$ whenever $r < 0$.
Clearly, $\bar{u}(r, z)=U(r, z)-Y(r, z)$ for all $r \ge 0$ and $z \in \mathbb{R}^{d}$. 

For the random field $U$, we have
\begin{align*}
&~\ee|U(t, x)-U(s, x)|^{2}\\
=&~\alpha_{H} \int_{\mathbb{R}} \int_{\mathbb{R}}|r-w|^{2 H-2} \ud r \ud w \int_{\mathbb{R}^{d}} \mathcal{F}\left(G((t-r)_{+},x-\cdot)-G((s-r)_{+},x-\cdot)\right)(\xi)\\
&~\qquad\qquad\times \overline{\mathcal{F}\left(G((t-w)_{+},x-\cdot)-G((s-w)_{+},x-\cdot)\right)(\xi)} \mu(\ud \xi)\\
=&~\alpha_{H}\int_{\mathbb{R}} \int_{\mathbb{R}}\ud r \ud w |r-w|^{2H-2} \int_{\mathbb{R}^{d}} \mu(\ud \xi) \varphi(r,\xi)\varphi(w,\xi)
\end{align*}
with $\varphi(r,\xi):=e^{-(t-r)\Psi(\xi)} \mathbb{1}_{\{r<t\}}-e^{-(s-r)\Psi(\xi)}\mathbb{1}_{\{r<s\}}.$
The Fourier transform of $\varphi(\cdot,\xi)$ is
\begin{align*}
\mathcal{F} \varphi(\cdot,\xi)(\tau)=\frac{e^{-\mathrm{i}  t \tau}-e^{-\mathrm{i} s \tau}}{\Psi(\xi)-\mathrm{i} \tau}.
\end{align*}
It follows from the Plancherel theorem that
$$
\alpha_{H}\int_{\mathbb{R}} \int_{\mathbb{R}} |r-w|^{2 H-2} \varphi(r,\xi) \varphi(w,\xi)\ud r \ud w=\beta_{H} \int_{\mathbb{R}}|\mathcal{F} \varphi(\cdot,\xi)(\tau)|^{2}|\tau|^{1-2 H} \ud \tau
$$
with $\beta_{H}=\frac{\alpha_{H}\Gamma(H-1/2)}{4^{1-H}\sqrt\pi\Gamma(1-H)}$. Hence, 
\begin{align*}
&~\ee|U(t, x)-U(s, x)|^{2}\\
=&~\beta_{H} \int_{\mathbb{R}^{d}} \mu(\ud \xi) \int_{\mathbb{R}} |\tau|^{1-2 H} \frac{2[1-\cos ((t-s) \tau)]}{\tau^{2}+\Psi(\xi)^{2}} \ud \tau\\
\ge&~2\beta_{H}  \int_{|\tau|>|t-s|^{-1}} [1-\cos ((t-s) \tau)]|\tau|^{-1-2 \alpha_2} \ud \tau\int_{|\xi|>|\tau|^\frac{1}{\alpha+\gamma}}\frac{|\tau|^{2-\frac{\beta}{\alpha+\gamma}}}{\tau^2+\Psi(\xi)^{2}}\mu(\ud \xi).
\end{align*}
For the case of $|t-s|<1$, $|\tau|>|t-s|^{-1}$ along with $|\xi|>|\tau|^{\frac{1}{\alpha+\gamma}}$ ensures $1+|\xi|^2<2|\xi|^2$ since $|\xi|>|\tau|^{\frac{1}{\alpha+\gamma}}>|t-s|^{-\frac{1}{\alpha+\gamma}}>1$. Thus, when $|t-s|<1$, by the change of variables $\eta=|\tau|^{-\frac{1}{\alpha+\gamma}}\xi$ and $r=|t-s|\tau$ in turn, we derive
\begin{align}\label{loweru}
&~\ee|U(t, x)-U(s, x)|^{2}\notag\\
\ge& ~2\beta_{H}  \int_{|\tau|>|t-s|^{-1}} [1-\cos ((t-s) \tau)]|\tau|^{-1-2 \alpha_2} \ud \tau\int_{|\xi|>|\tau|^\frac{1}{\alpha+\gamma}}\frac{|\tau|^{2-\frac{\beta}{\alpha+\gamma}}}{\tau^2+2^{\alpha}|\xi|^{2(\alpha+\gamma)}}\mu(\ud \xi)\notag\\
\ge&~2\beta_{H}|t-s|^{2\alpha_2} \int_{|r|>1} [1-\cos (r)]|r|^{-1-2\alpha_2} \ud r\int_{|\eta|>1}\frac{1}{1+2^{\alpha}|\eta|^{2(\alpha+\gamma)}}\mu(\ud \eta)\notag\\
=&~C_{\gamma,\alpha,d,H}|t-s|^{2\alpha_2}.
\end{align}

For the temporal increment of $Y$, we have
\begin{align*}
&\quad\ \ee\left|Y(t,x)-Y(s,x)\right|^{2} \\
&=\mathbb{E}\left|\int_{-\infty}^{0} \int_{\mathbb{R}^{d}}\left(G(t-r, x-z)-G(s-r, x-z)\right) W^{H,\beta}(\ud r, \ud z)\right|^{2} \\
&=\alpha_{H} \int_{0}^{\infty} \int_{0}^{\infty} \ud r \ud w|r-w|^{2 H-2}\int_{\mathbb{R}^{d}} \mu(\ud \xi)\phi(r,\xi)\phi(w,\xi)
\end{align*}
with $\phi(r,\xi):=\left(e^{-(t+r)\Psi(\xi)}-e^{-(s+r)\Psi(\xi)}\right) \mathbb{1}_{\{r>0\}}$.  Then for all $t_0\le s\le t \le T$,
\begin{align}\label{uppery}
&~\ee\left|Y(t,x)-Y(s,x)\right|^{2}\notag \\
=&~\beta_{H} \int_{\br^{d}} e^{-2s\Psi(\xi)}\left|1-e^{-(t-s)\Psi(\xi)}\right|^{2} \mu(\ud \xi) \int_{\mathbb{R}} \frac{\ud \tau}{\left(|\tau|^{2}+\Psi(\xi)^2\right)|\tau|^{2 H-1}}\notag \\
\le&~\beta_{H} \int_{\br^{d}} e^{-2s\Psi(\xi)}|(t-s)\Psi(\xi)|^2\mu(\ud \xi) \bigg(\int_{|\tau|<1} \frac{\ud \tau}{\Psi(\xi)^2|\tau|^{2 H-1}}+\int_{|\tau|\ge1} \frac{\ud \tau}{|\tau|^{2 H+1}}\bigg) \notag\\
\le&~ C_{H}|t-s|^{2}\int_{\br^{d}}\left(1+\Psi(\xi)^2\right)e^{-2t_0\Psi(\xi)} \mu(\ud \xi)\notag\\
\le&~C_{H,t_0}|t-s|^{2}
\end{align}
due to
$
\mathcal{F} \phi(\cdot,\xi)(\tau)= \frac{e^{-t\Psi(\xi)}-e^{- s\Psi(\xi)}}{\Psi(\xi)+\mathrm{i} \tau}.
$

By \eqref{loweru} and \eqref{uppery}, we obtain
\begin{align}\label{lower1}
\ee\left|\bar{u}(t,x)-\bar{u}(s,x)\right|^{2}
\ge&~\frac{1}{2}\ee\left|U(t,x)-U(s,x)\right|^{2} -\ee\left|Y(t,x)-Y(s,x)\right|^{2} \notag\\
\ge&~\frac{C_{\gamma,\alpha,d,H}}{2}|t-s|^{2\alpha_2}-C_{H,t_0}|t-s|^{2} \notag\\
\ge&~\frac{C_{\gamma,\alpha,d,H}}{4}|t-s|^{2\alpha_2},
\end{align}
for $|t-s|<\varepsilon$ with $\varepsilon:=\left(\frac{C_{\gamma,\alpha,d,H}}{4C_{H,t_0}}\right)^{\frac{1}{2(1-\alpha_2)}}\wedge 1>0$. For the case of $|t-s|\ge \varepsilon$, we have
\begin{align*}
&~\ee\left|\bar{u}(t,x)-\bar{u}(s,x)\right|^{2}\\
=&~\beta_{H}\int_{\br}|\tau|^{1-2H}\ud \tau\int_{\br^d}\mu(\ud\xi)\left|\mathcal F[G(t-\cdot,x-\cdot)\mathbb 1_{[0,t]}-G(s-\cdot,x-\cdot)\mathbb 1_{[0,s]}](\tau,\xi)\right|^2.
\end{align*}
The integrand can be further bounded as
\begin{align*}
&~\left|\mathcal F[G(t-\cdot,x-\cdot)\mathbb 1_{[0,t]}-G(s-\cdot,x-\cdot)\mathbb 1_{[0,s]}](\tau,\xi)\right|^2\\
=&~\frac{\left|e^{-\mathrm{i}\tau t}-e^{-t\Psi(\xi)}-e^{-\mathrm{i}\tau s}+e^{-s\Psi(\xi)}\right|^2}{\Psi(\xi)^2+\tau^2}\ge\frac{\left|\sin(\tau t)-\sin(\tau s)\right|^2}{\Psi(\xi)^2+\tau^2}
=\frac{4\cos^2( \frac{t+s}{2}\tau )\sin^2(\frac{t-s}{2}\tau)}{\Psi(\xi)^2+\tau^2}\ge0,
\end{align*}
which implies that $\ee|\bar{u}(t,x)-\bar{u}(s,x)|^{2}=0$ if and only if $t=s$.
Since $(t,s)\mapsto\ee|\bar{u}(t,x)-\bar{u}(s,x)|^{2}$ is continuous on $[t_0,T]\times[t_0,T]$, 
$$
B_\varepsilon:=\inf_{\substack{t,s\in[t_0,T]\\|t-s|\ge \varepsilon}}\ee|\bar{u}(t,x)-\bar{u}(s,x)|^{2}>0.
$$
 Thus, it follows that for $|t-s|\ge\varepsilon$,
\begin{align}\label{lower2}
\ee\left|\bar{u}(t,x)-\bar{u}(s,x)\right|^{2}\ge B_\varepsilon T^{-2\alpha_2}|t-s|^{2\alpha_2}.
\end{align}
Combining \eqref{lower1} and \eqref{lower2}, we conclude the desired lower bound. 

For the case $x\ne y$, we have
\begin{align*}
&~\ee\left|\bar{u}(t,x)-\bar{u}(s,y)\right|^{2}\\
=&~\alpha_{H}\int_0^s\int_0^s|r-w|^{2H-2}\ud r\ud w\int_{\br^d}\mu(\ud \xi)(e^{-t\Psi(\xi)}e^{-\mathrm i \langle x-y,\xi\rangle}-e^{-s\Psi(\xi)})\\
&\quad\qquad\qquad\qquad\qquad\qquad\qquad\qquad\qquad\qquad\times (e^{-t\Psi(\xi)}e^{\mathrm i \langle x-y,\xi\rangle}-e^{-s\Psi(\xi)})e^{(r+w)\Psi(\xi)}\\
&+2\alpha_{H}\int_0^s\ud r\int_s^t\ud w|r-w|^{2H-2}\int_{\br^d}\mu(\ud \xi)(e^{-(t-r)\Psi(\xi)}e^{-\mathrm i \langle x,\xi\rangle}-e^{-(s-r)\Psi(\xi)}e^{-\mathrm i \langle y,\xi\rangle})\\
&\qquad\qquad\qquad\qquad\qquad\qquad\qquad\qquad\qquad\qquad\times e^{-(t-w)\Psi(\xi)}e^{\mathrm i \langle x,\xi\rangle}\\
&+\alpha_{H}\int_s^t\int_s^t|r-w|^{2H-2}\ud r\ud w\int_{\br^d}\mu(\ud \xi) e^{-(t-r)\Psi(\xi)}e^{-(t-w)\Psi(\xi)}\\
\ge&~\alpha_{H}\int_0^s\int_0^s|r-w|^{2H-2}\ud r\ud w\int_{\br^d}\mu(\ud \xi)(e^{-t\Psi(\xi)}-e^{-s\Psi(\xi)})^2e^{(r+w)\Psi(\xi)}\\
&+2\alpha_{H}\int_0^s\ud r\int_s^t\ud w|r-w|^{2H-2}\int_{\br^d}\mu(\ud \xi)(e^{-(t-r)\Psi(\xi)}-e^{-(s-r)\Psi(\xi)})e^{-(t-w)\Psi(\xi)}\\
&+\alpha_{H}\int_s^t\int_s^t|r-w|^{2H-2}\ud r\ud w\int_{\br^d}\mu(\ud \xi) e^{-(t-r)\Psi(\xi)}e^{-(t-w)\Psi(\xi)}\\
=&~\ee\left|\bar{u}(t,x)-\bar{u}(s,x)\right|^{2},
\end{align*}
which together with \eqref{timeH} yields \eqref{timeHS}. The proof is finished.
\end{proof}

We end this section by giving the joint regularity of $\bar{u}$. For $(t,x),(s,y)\in[t_0,T]\times[-M,M]^d$, denote by
$$\varrho\big((t, x),(s, y)\big):=q_1(|x-y|)+q_2(|t-s|)$$ with $q_2(\tau):=\tau^{\alpha_2}$ for any $\tau\ge 0$. From Propositions \ref{srs} and \ref{srt}, one can obtain the following result.

\begin{corollary}\label{joint}
Under Assumptions  \ref{dalang-condition} and \ref{noise},  there exist two positive constants $C_{5}$ and $C_{6}$ such that 
\begin{align}\label{jointreg}
C_5\varrho \big((t, x),(s, y)\big)\le \|\bar{u}(t,x)-\bar{u}(s,y)\|_2\le C_6\varrho \big((t, x),(s, y)\big),
\end{align}
 for all $s,t\in[t_0,T]$ and $x,y\in[-M,M]^d$.
\end{corollary}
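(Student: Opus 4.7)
The plan is to deduce Corollary \ref{joint} directly from Propositions \ref{srs} and \ref{srt} using only the (forward and reverse) triangle inequality in $L^2(\Omega)$; no further analysis of the spectral representation is needed.

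For the upper bound, I will insert the intermediate point $(s,x)$ and write
$$\|\bar{u}(t,x)-\bar{u}(s,y)\|_2 \le \|\bar{u}(t,x)-\bar{u}(s,x)\|_2 + \|\bar{u}(s,x)-\bar{u}(s,y)\|_2.$$
Applying \eqref{timeH} of Proposition \ref{srt} to the first term and \eqref{spaceH} of Proposition \ref{srs} to the second, I obtain
$$\|\bar{u}(t,x)-\bar{u}(s,y)\|_2 \le C_4 q_2(|t-s|)+C_2 q_1(|x-y|) \le (C_2\vee C_4)\,\varrho\big((t,x),(s,y)\big),$$
which gives the right-hand side of \eqref{jointreg} with $C_6 := C_2\vee C_4$.

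For the lower bound, \eqref{timeHS} of Proposition \ref{srt} already provides the temporal part: $\|\bar{u}(t,x)-\bar{u}(s,y)\|_2 \ge C_3 q_2(|t-s|)$ for arbitrary spatial arguments. To capture the spatial contribution when $s\ne t$, I will use the reverse triangle inequality anchored at $(s,x)$:
$$\|\bar{u}(t,x)-\bar{u}(s,y)\|_2 \ge \|\bar{u}(s,x)-\bar{u}(s,y)\|_2 - \|\bar{u}(t,x)-\bar{u}(s,x)\|_2 \ge C_1 q_1(|x-y|) - C_4 q_2(|t-s|),$$
where the last step combines \eqref{spaceH} and \eqref{timeH}. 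Although this estimate alone could be negative, I will cancel the unwanted $-C_4 q_2(|t-s|)$ by adding $(C_4/C_3)$ times the temporal lower bound from \eqref{timeHS}, yielding
$$(1+C_4/C_3)\,\|\bar{u}(t,x)-\bar{u}(s,y)\|_2 \ge C_1 q_1(|x-y|).$$
Hence $\|\bar{u}(t,x)-\bar{u}(s,y)\|_2 \ge \frac{C_1 C_3}{C_3+C_4}\,q_1(|x-y|)$, and averaging this with $\|\bar{u}(t,x)-\bar{u}(s,y)\|_2 \ge C_3 q_2(|t-s|)$ produces
$$\|\bar{u}(t,x)-\bar{u}(s,y)\|_2 \ge \tfrac12 \min\Big(\tfrac{C_1 C_3}{C_3+C_4},\,C_3\Big)\,\varrho\big((t,x),(s,y)\big),$$
which is the desired inequality with $C_5$ equal to this minimum divided by $2$.

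There is essentially no obstacle beyond bookkeeping: the two propositions have already done the substantive harmonic-analytic work (extracting the logarithmic factor in the critical case $\alpha_1=1$ and handling the interaction between the Bessel factor $(I-\Delta)^{\alpha/2}$ and the fractional noise). The only mildly delicate point is that the spatial lower bound in Proposition \ref{srs} is stated for fixed $t$, so without the auxiliary inequality \eqref{timeHS} (valid for \emph{all} $(t,x),(s,y)$) one could not absorb the error term in the reverse triangle inequality; the argument above shows that \eqref{timeHS} is exactly what is needed to close the loop.
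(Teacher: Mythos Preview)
Your proof is correct and follows essentially the same approach as the paper: the upper bound via the triangle inequality through the intermediate point, and the lower bound by combining the reverse triangle inequality with the crucial mixed estimate \eqref{timeHS}. The only cosmetic difference is that the paper splits the lower bound into two cases according to which of $q_1(|x-y|)$ and $q_2(|t-s|)$ dominates, whereas you avoid the case distinction by taking a weighted sum of the reverse-triangle estimate and \eqref{timeHS}; both routes use the same ingredients and yield the same conclusion.
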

\begin{proof}
The upper bound is a direct application of the triangle inequality 
and the upper bounds in \eqref{spaceH} and \eqref{timeH}.

To prove the lower bound, we consider the foolowing two cases.

{\bf Case 1:} $|t-s|^{\alpha_2} \leq \frac{
C_1}{2C_4}\left(\ln \frac{C_{d,M}}{|x-y|}\right)^{\frac{1}{2}\mathbb 1_{\{\alpha_1=1\}}}|x-y|^{\alpha_1 \wedge 1}$.

Applying the triangle inequality and then, using the lower bound in \eqref{spaceH} and the upper bound in \eqref{timeH} we obtain
\begin{align*}
\|\bar{u}(t, x)-\bar{u}(s, y)\|_{2}^2 & \geq \frac{1}{2}\|\bar{u}(t, x)-\bar{u}(t, y)\|_2^2-\|\bar{u}(t, y)-\bar{u}(s, y)\|_2^2 \\
& \ge \frac{C_1^2}{2}\left(\ln \frac{C_{d,M}}{|x-y|}\right)^{\mathbb 1_{\{\alpha_1=1\}}}|x-y|^{2\alpha_1 \wedge 2}-C_4^2|t-s|^{2\alpha_2} \\
& \ge \frac{C_1^2}{8}\left(\ln \frac{C_{d,M}}{|x-y|}\right)^{\mathbb 1_{\{\alpha_1=1\}}}|x-y|^{2\alpha_1 \wedge 2}+\frac{C_4^2}{2}|t-s|^{2\alpha_2} \\
&\ge \frac{C_1^2\wedge 4C_4^2}{16}\varrho \big((t, x),(s, y)\big)^2.
\end{align*}

{\bf Case 2:} $|t-s|^{\alpha_2} >\frac{
C_1}{2C_4}\left(\ln \frac{C_{d,M}}{|x-y|}\right)^{\frac{1}{2}\mathbb 1_{\{\alpha_1=1\}}}|x-y|^{\alpha_1 \wedge 1}$.

It follows from \eqref{timeHS} that for any $x,y\in[-M,M]^d$ and $s,t\in[t_0,T]$
\begin{align*}
\|\bar{u}(t, x)-\bar{u}(s, y)\|_2  \ge&~ C_3|t-s|^{\alpha_2}\\
\geq &~\frac{C_3}{2}|t-s|^{\alpha_2}+\frac{C_3
C_1}{4C_4}\left(\ln \frac{C_{d,M}}{|x-y|}\right)^{\frac{1}{2}\mathbb 1_{\{\alpha_1=1\}}}|x-y|^{\alpha_1 \wedge 1}\\
\geq &~ \frac{C_3}{4C_4}[(2C_4)\wedge C_1 ]\varrho \big((t, x),(s, y)\big).
\end{align*}
These finish the proof of the entire corollary.
\end{proof}

\section{Hitting properties for system of fractional kinetic equations}\label{section4}
Let $\hat{u}=\left\{\hat{u}(t, x)\right\}_{(t,x) \in [0,T]\times \br^d}$ be an $\br^n$-valued random field whose components $\hat{u}_j, j = 1,\dots,n$, are independent copies of the mild solution $u$ of the fractional kinetic equation \eqref{FKE}. In this section, we are devoted to investigating hitting probabilities and the polarity of points in the critical dimension for $\hat u$. 
\subsection{Hitting probability} 
Throughout this part, we use the notation
$$
\sigma_{t, x}^2=\ee|\bar{u}(t, x)|^{2}, ~\rho_{(t, x),(s, y)}=\frac{\ee[\bar{u}(t, x) \bar{u}(s, y)]}{\sigma_{t, x} \sigma_{s, y}},\quad s, t \in[t_0, T], x, y \in \br^{d}.
$$
\begin{lemma}\label{2ndorder}
Under Assumptions  \ref{dalang-condition} and \ref{noise}, the following properties hold:
\begin{itemize}
\item[(i)] There exists a constant  $c_{d, t_0,T}$  such that for all  $s, t \in[t_0, T], x, y \in [-M,M]^d$,
\begin{align}
\left|\sigma_{t, x}^{2}-\sigma_{s, y}^{2}\right| \leq c_{d,t_0, T}\|\bar{u}(t, x)-\bar{u}(s, y)\|_2^{\frac{1}{\alpha_2}}.
\end{align}
\item[(ii)] There exist constants  $0<C_{d, t_{0},T}<C_{d, T}$ such that for any  $(t, x) \in   \left[t_{0}, T\right] \times \br^d$,
\begin{align}\label{boundsigma}
 C_{d, t_{0},T} \leq \sigma_{t, x}^{2} \leq C_{d, T}.
\end{align}
\item[(iii)] For any $ (t, x),(s, y) \in\left[t_{0}, T\right] \times [-M,M]^d$ with $(t, x) \neq(s, y)$,
\begin{align}\label{corr}
\rho_{(t, x),(s, y)}<1.
\end{align}
\end{itemize}
\end{lemma}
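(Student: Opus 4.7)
The plan rests on a translation-invariance observation: a direct computation of the isometry shows $\sigma_{t,x}^2 = J_t$ (with $J_t$ as in \eqref{Jt}) is independent of $x$, since $|\mathcal F G(t-r,x-\cdot)(\xi)|^2 = e^{-2(t-r)\Psi(\xi)}$ carries no $x$-dependence after modulus. Given this, (ii) follows immediately: the upper half of \eqref{Ntbound} gives $\sigma_{t,x}^2 \le C_{H,T}\int_{\br^d}(1+|\xi|^2)^{-(\alpha+\gamma)H}\mu(\ud\xi)<\infty$ under Assumption \ref{dalang-condition}, while the monotonicity $J_t\ge J_{t_0}$ (the integrand defining $J_t$ is nonnegative) combined with the lower half of \eqref{Ntbound} at $t=t_0$ yields a positive lower bound depending on $t_0$.

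For (i), I would first prove $J$ is Lipschitz on $[t_0,T]$. Differentiating (using symmetry in the double time-integral) gives
\[
J'(t)=2\alpha_H\int_0^t u^{2H-2}\,\ud u\int_{\br^d}e^{-(2t-u)\Psi(\xi)}\,\mu(\ud\xi).
\]
Since $e^{-(2t-u)\Psi(\xi)}\le e^{-t_0\Psi(\xi)}$ on $u\in[0,t]$, $t\in[t_0,T]$, and $\int e^{-t_0\Psi(\xi)}\mu(\ud\xi)<\infty$ (rapid decay at infinity plus temperedness of $\mu$ near zero), $J'$ is uniformly bounded. Hence $|J_t-J_s|\le L|t-s|$. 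Inverting the lower bound $\|\bar u(t,x)-\bar u(s,y)\|_2\ge C_3|t-s|^{\alpha_2}$ from \eqref{timeHS} to $|t-s|\le C_3^{-1/\alpha_2}\|\bar u(t,x)-\bar u(s,y)\|_2^{1/\alpha_2}$ and substituting delivers the exponent $1/\alpha_2$ (which is strictly larger than $1$ since $\alpha_2=H-\beta/[2(\alpha+\gamma)]<H<1$).

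For (iii) --- which I expect to be the main obstacle --- I would argue by contradiction via the harmonizable representation. Writing $\phi_{t,x}:=G(t-\cdot,x-\cdot)\mathbb 1_{[0,t]}$, a Cauchy--Schwarz equality $\rho=1$ would force $\phi_{t,x}=c\phi_{s,y}$ modulo the $\mathcal H$-seminorm null space, with $c=\sigma_{t,x}/\sigma_{s,y}>0$. Using the Plancherel form of $\langle\cdot,\cdot\rangle_{\mathcal H}$, the full support of $\mu$ (Lemma \ref{Upsilon}(i): $\Upsilon>0$ on $\br^d\setminus\{0\}$), and continuity of the Fourier symbols, this a.e.\ identity upgrades to the pointwise identity
\[
e^{-\ri\langle\xi,x\rangle}\bigl(e^{-\ri\tau t}-e^{-t\Psi(\xi)}\bigr) = c\,e^{-\ri\langle\xi,y\rangle}\bigl(e^{-\ri\tau s}-e^{-s\Psi(\xi)}\bigr) \quad\text{on }\br\times\br^d.
\]
For fixed $\xi\ne 0$, viewing both sides as functions of $\tau$: linear independence of $e^{-\ri\tau t}$ and $e^{-\ri\tau s}$ when $t\ne s$ (no nonzero oscillating exponential equals a constant in $\tau$) forces $t=s$; sending $\xi\to 0$ then forces $c=1$; and finally $e^{-\ri\langle\xi,x-y\rangle}=1$ for all $\xi$ forces $x=y$, contradicting $(t,x)\ne(s,y)$. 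The delicate part is the passage from a seminorm null-space condition to a genuine pointwise identity --- this uses both the full support of $|\tau|^{1-2H}\ud\tau\otimes\mu$ and continuity of the symbols away from $(\tau,\xi)=(0,0)$ --- and then carefully separating the $\tau$-dependence from the $\xi$-dependence to extract $t=s$, $c=1$, and $x=y$ in turn.
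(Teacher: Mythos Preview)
Your proposal is correct, and for parts (i) and (ii) it is essentially the paper's argument: both rely on $\sigma_{t,x}^2=J_t$ being independent of $x$, bound $|J_t-J_s|$ by $C|t-s|$ using $e^{-(2t-u)\Psi(\xi)}\le e^{-t_0\Psi(\xi)}$ and the finiteness of $\int e^{-t_0\Psi}\,\ud\mu$, and then invoke \eqref{timeHS} (or equivalently Corollary~\ref{joint}) to convert $|t-s|$ into $\|\bar u(t,x)-\bar u(s,y)\|_2^{1/\alpha_2}$. The only cosmetic difference is that you differentiate $J$ whereas the paper decomposes $J_t-J_s$ into a cross term $\sigma_1$ and a diagonal term $\sigma_2$; both routes yield the same Lipschitz bound on $[t_0,T]$.

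For (iii) the overall strategy (Cauchy--Schwarz equality $\Rightarrow$ linear dependence $\Rightarrow$ pointwise identity via full support of $|\tau|^{1-2H}\ud\tau\otimes\Upsilon(\xi)\ud\xi$ and continuity of the symbols) matches the paper, but your extraction of the contradiction is more economical. The paper splits into two cases: when $s<t$ it evaluates the symbol identity at $\tau=0$, $\eta\in\{x-y\}^{\perp}$ and lets $|\eta|\to\infty$ to force $\lambda=1$, then derives a contradiction from $e^{-s\Psi}>e^{-t\Psi}$; when $s=t$, $x\ne y$, it reverts to the non-Plancherel representation and argues separately for $\lambda=1$ and $\lambda\ne 1$. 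Your single argument---fix $\xi\ne 0$, use linear independence of $\{1,e^{-\mathrm i\tau t},e^{-\mathrm i\tau s}\}$ in $\tau$ to force $t=s$, then cancel the common nonzero factor (or send $\xi\to 0$) to get $c=1$, then read off $x=y$---handles both cases at once and avoids the orthogonal-complement trick. Both are valid; yours is a bit cleaner.
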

\begin{proof}
(i) By the definition of $\sigma_{t, x}^2$ and \eqref{Jt}, we have 
\begin{align}\label{sigam-2}
|\sigma_{t, x}^2-\sigma_{s, y}^2|&=\alpha_{H}\int_{0}^{t} \int_{0}^{t}\ud r \ud w|r-w|^{2 H-2} \int_{\br^{d}} e^{-(r+w)\Psi(\xi)}\mu(\ud \xi)  \notag\\
&\quad~-\alpha_{H}\int_{0}^{s} \int_{0}^{s}\ud r \ud w|r-w|^{2 H-2} \int_{\br^{d}}e^{-(r+w)\Psi(\xi)}\mu(\ud \xi) \notag\\
&=2\alpha_{H}\sigma_1(s,t)+\alpha_{H}\sigma_2(s,t),
\end{align}
where 
\begin{align*}
 \sigma_1(s,t):=\int_{0}^{s} \int_{s}^{t}\ud r \ud w|r-w|^{2 H-2} \int_{\br^{d}}e^{-(r+w)\Psi(\xi)}\mu(\ud \xi),\\
 \sigma_2(s,t):=\int_{s}^{t} \int_{s}^{t}\ud r \ud w|r-w|^{2 H-2} \int_{\br^{d}}e^{-(r+w)\Psi(\xi)}\mu(\ud \xi).
\end{align*}
Since $H\in(\frac{1}{2},1)$ and $\int_{\br^{d}}e^{-c\Psi(\xi)}\mu(\ud \xi)<\infty$ for any $c>0$, we obtain
\begin{align*}
 \sigma_1(s,t)&\le \int_{0}^{s} \ud r\int_{s}^{t} (w-r)^{2 H-2} \ud w\int_{\br^{d}}e^{-t_0\Psi(\xi)}\mu(\ud \xi) \notag\\
&\le  C\frac{t^{2H}-(t-s)^{2H}-s^{2H}}{2H(2H-1)}\le C|t-s|,
\end{align*}
due to $t^{2H}-s^{2H}\le 2HT^{2H-1}(t-s)$ and $t-s<T$. Similarly,
\begin{align}\label{sigam2}
 \sigma_2(s,t)\le |t-s|^{2H}\int_{0}^{1} \int_{0}^{1}\ud r \ud w|r-w|^{2 H-2} \int_{\br^{d}}   e^{-2t_0\Psi(\xi)} \mu(\ud \xi)\le C|t-s|^{2H}.
\end{align}
By \eqref{sigam-2} and \eqref{sigam2}, we obtain $|\sigma_{t, x}^2-\sigma_{s, y}^2|\le C|t-s|\le \tilde{C} \varrho \big((t, x),(s, y)\big)^{\frac{1}{\alpha_2}}.$ This together with Corollary \ref{joint} yields the desired result.

(ii) For any $t\in[t_0,T]$ and $x\in\br^d$, it follows from Theorem \ref{wellpose} and \eqref{Jt} that
\begin{align}
\sigma_{t, x}^{2} \le&~\sup _{(r,z ) \in[0, T]\times \mathbb{R}^{d}} \ee\left|\bar{u}(r,z ) \right|^2=:C_{d, T}<\infty
\end{align}
and
\begin{align}\label{nondeg}\notag
\sigma_{t, x}^{2} \ge&~
 \alpha_{H}\int_{0}^{t_0} \int_{0}^{t_0}\ud r \ud w|r-w|^{2 H-2} \int_{\br^{d}}\mu(\ud \xi)   \exp \left(-2T\Psi(\xi)\right)\\
=&~t_0^{2H}\int_{\br^{d}}  \exp \left(-2T\Psi(\xi)\right)\mu(\ud \xi)=:C_{d, t_{0},T}>0.
\end{align}

(iii) Assume by contradiction that $\rho_{(t, x),(s, y)}=1$ for some $(t, x) \neq(s, y)$. The sufficient and necessary condition under which the H\"older inequality becomes an equality implies that there exists $\lambda \in \br\setminus \{0\}$ such that $\bar{u}(t, x)=$ $\lambda \bar{u}(s, y)$ a.s., and in particular,
\begin{align}\label{contradiction}
\ee|\bar{u}(t, x)-\lambda \bar{u}(s, y)|^2=0 .
\end{align}
The  proof of \eqref{corr} is divided into two cases.

{\bf Case 1: $s<t$.} 
The left-hand side of \eqref{contradiction} is equal to
\begin{align*}
&~\beta_{H}\int_{\br}|\tau|^{1-2H}\ud\tau\int_{\br^d}\mu(\ud\xi)\left|\frac{e^{-\mathrm i \langle\xi   ,x\rangle}(e^{-\mathrm{i} \tau t}-e^{-t\Psi(\xi)})-\lambda e^{-\mathrm i \langle\xi ,y\rangle}(e^{-\mathrm{i} \tau s}-e^{-s\Psi(\xi)})}{\Psi(\xi)-\mathrm{i} \tau}\right|^{2}\\
=&~\beta_{H}\int_{\br}|\tau|^{1-2H}\ud\tau\int_{\br^d}\mu(\ud\xi)\frac{\left|e^{-\mathrm i \langle\xi   ,x-y\rangle}(e^{-\mathrm{i} \tau t}-e^{-t\Psi(\xi)})-\lambda (e^{-\mathrm{i} \tau s}-e^{-s\Psi(\xi)})\right|^{2}}{\Psi(\xi)^2+ \tau^2}.
\end{align*}
The integrand vanishes only when the continuous function 
$$f_{\lambda}(\xi,\tau)=e^{-\mathrm i \langle\xi   ,x-y\rangle}(e^{-\mathrm{i} \tau t}-e^{-t\Psi(\xi)})-\lambda (e^{-\mathrm{i} \tau s}-e^{-s\Psi(\xi)})\equiv 0$$
for any $\tau\in \br$ and $\xi\in\br^d$. If so, we have $ f_{\lambda}(\eta,0)=1-\lambda+\lambda e^{-s\Psi(\eta)}-e^{-t\Psi(\eta)}\equiv 0$ for any $\eta\in\{x-y\}^{\bot }\setminus \{0\}$, where $\{x-y\}^{\bot }$ is the orthogonal complement of $\text{span}\{x-y\}$. Let $|\eta|\to\infty$, we obtain $\lambda=1$. But $e^{-s\Psi(\eta)}-e^{-t\Psi(\eta)}>0$ for all $\eta\in\{x-y\}^{\bot }\setminus \{0\}$. This leads to a contradiction.

{\bf Case 2: $s=t,~ x \neq y.$}  The left-hand side of \eqref{contradiction} is equal to 
\begin{align*}
\alpha_{H}\int_{0}^{t}\int_{0}^{t}|r-w|^{2H-2} \ud r\ud w \int_{\br^d} \mu(\ud \xi)\big|\mathrm{e}^{-\mathrm{i} \langle\xi,  x\rangle}-\lambda \mathrm{e}^{-\mathrm{i} \langle\xi , y\rangle}\big|^{2}e^{-(t-r)\Psi(\xi)} e^{-(t-w)\Psi(\xi)} .
\end{align*}

If $\lambda=1$, then the integrand vanishes only on the set $\{\xi\in\mathbb R^d:\cos \big(\langle\xi ,x-y\rangle\big)=1\}$, which has  a zero Lebesgue measure.
Hence, by Assumption \ref{noise} (i), we reach a contradiction. 

If $\lambda \neq 1$, then $|e^{-\mathrm{i} \langle\xi,  x\rangle}-\lambda e^{-\mathrm{i} \langle\xi,  y\rangle}|=1+\lambda^2-2\lambda\cos(\langle\xi ,x-y\rangle)\ge (1-|\lambda|)^2$, and hence
\begin{align*}
 &\alpha_H\int_{0}^{t}\int_{0}^{t}|r-w|^{2H-2} \ud r\ud w \int_{\br^d} \mu(\ud \xi)\big|e^{-\mathrm{i} \langle\xi,  x\rangle}-\lambda e^{-\mathrm{i} \langle\xi,  y\rangle}\big|^{2}e^{-(t-r)\Psi(\xi)} e^{-(t-w)\Psi(\xi)} \\
\ge&(1-|\lambda|)^2 \int_{\br^d} N_{t_0}(\xi)\mu(\ud \xi) >0.
 \end{align*}
Thus, we also get a contradiction in this case. 
\end{proof}

One of main results of this section is the following theorem,
which gives the lower and upper bounds for the hitting probability of the random field $\hat u$. We first introduce some notations. For $\tau \in \mathbb{R}_{+},x\in\br^d$, let
$$
g_{q}(\tau):=\tau^n \check{q_1}(\tau)^{-d}\check{q_2}(\tau)^{-1},\quad \mathfrak{g}_q(x):=g_{q}(|x|)^{-1}.
$$
We refer to \cite[Section 5]{MR4333508} for specific properties of these functions.

\begin{theorem}\label{hittingprobability}
Under Assumptions  \ref{dalang-condition} and \ref{noise}, for any $n\ne Q:=\alpha_2^{-1}+d(1 \wedge\alpha_1)^{-1}$,  the hitting probabilities of the $n$-dimensional random field $\hat u$ satisfy the following bounds.\\
(i) There exists a constant $C:=C_{\alpha,\gamma,H,t_0, T,M, n, d}$ such that for any $A \in \mathscr{B}\left(\br^n\right) $
\begin{align}\label{hitupper}
\bp(\hat{u}( [t_0,T]\times[-M,M]^d)\cap A \neq \emptyset) \le C \mathcal{H}_{g_q}(A).
\end{align}
(ii) For any bounded Borel set $A \in \mathscr{B}\left(\br^n\right)$, i.e., $A \subset [-N,N]^n$ for some $N>0$, there exists a constant $C:=C_{\alpha,\gamma,H,t_0, T,M, N, n, d}$ such that
\begin{align}\label{hitlower}
\bp(\hat{u}( [t_0,T]\times[-M,M]^d) \cap A \neq \emptyset) \ge 
C \operatorname{Cap}_{\mathfrak{g}_q}(A).
\end{align}
\end{theorem}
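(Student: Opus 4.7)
The plan is to reduce Theorem \ref{hittingprobability} to the general hitting-probability criteria established in \cite{MR4333508} for anisotropic centered Gaussian random fields whose canonical pseudo-distance is given, up to constants, by a function of the form $\varrho$ in \eqref{joint_reg} (with a possible logarithmic correction in the critical case $\alpha_1=1$). Since the components of $\hat u$ are i.i.d. copies of $u = G\ast u_0 + \bar u$, and $G\ast u_0$ is deterministic and Lipschitz on $[t_0,T]\times[-M,M]^d$, it suffices to prove the bounds for the $n$-dimensional Gaussian random field $\hat{\bar u}$ whose components are i.i.d. copies of $\bar u$; the Lipschitz translation by $(G\ast u_0,\dots,G\ast u_0)$ only affects constants. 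The four hypotheses needed to invoke the criteria of \cite{MR4333508} are: the two-sided comparison $\|\bar u(t,x)-\bar u(s,y)\|_2 \asymp \varrho((t,x),(s,y))$, supplied by Corollary \ref{joint}; the H\"older regularity of the variance function with exponent $1/\alpha_2$, supplied by Lemma \ref{2ndorder}(i); uniform non-degeneracy $\sigma_{t,x}^2\ge C>0$, supplied by Lemma \ref{2ndorder}(ii); and the strict two-point nondegeneracy $\rho_{(t,x),(s,y)}<1$ for $(t,x)\ne(s,y)$, supplied by Lemma \ref{2ndorder}(iii).

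For the upper bound (i), I would follow the standard covering argument. Using Lemma \ref{2ndorder}(ii) and Gaussian concentration, one has $\bp(|\hat u(t,x)-z|\le r)\le C(r/\sqrt{C_{d,t_0,T}})^n$ for every $z\in\br^n$. Because $\varrho$ decouples space and time, the minimal number of $\varrho$-balls of radius $r$ needed to cover $[t_0,T]\times[-M,M]^d$ is of order $\check{q_1}(r)^{-d}\check{q_2}(r)^{-1}$. On each such $\varrho$-ball, Dudley's entropy bound together with Corollary \ref{joint} shows that the oscillation of $\hat u$ exceeds $Cr$ only with exponentially small probability. Summing over any cover of $A$ by Euclidean balls of radii $r_i\le \varepsilon$ and optimizing, the total probability is bounded by $C\sum_i g_q(2r_i)$, which by definition yields $\bp(\hat u(\cdot)\cap A\ne\emptyset)\le C\,\mathcal H_{g_q}(A)$.

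For the lower bound (ii), I would apply the Paley--Zygmund second-moment method to an approximate occupation time. For $\nu\in\mathscr P(A)$, $\epsilon>0$ and $p_\epsilon(z):=(2\pi\epsilon)^{-n/2}\exp(-|z|^2/(2\epsilon))$, set
\[
J_\epsilon \;:=\; \int_{t_0}^{T}\!\int_{[-M,M]^d}\!\int_A p_\epsilon\bigl(\hat u(t,x)-z\bigr)\,\nu(\ud z)\,\ud x\,\ud t.
\]
By Lemma \ref{2ndorder}(ii) the one-point density of $\hat u(t,x)$ is uniformly bounded above and below, so $\ee[J_\epsilon]\ge c_1>0$ uniformly in $\epsilon$ and $\nu$. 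For $\ee[J_\epsilon^2]$ one needs the joint density of $(\hat u(t,x),\hat u(s,y))$, whose non-degeneracy is guaranteed by Lemma \ref{2ndorder}(iii) on the diagonal-complement and by the two-sided bound of Corollary \ref{joint} near the diagonal. Inserting these density estimates and integrating against $\nu\otimes\nu$ produces the bound $\ee[J_\epsilon^2]\le C\,\mathcal E_{\mathfrak g_q}(\nu)$, the essential step being a change of variables that turns the Gaussian factor into $g_q(|z-z'|)^{-1}$ after integrating in $(t,x,s,y)$ using the anisotropic distance $\varrho$. Paley--Zygmund then yields $\bp(\hat u(\cdot)\cap A^{(r)}\ne\emptyset)\ge c\,\mathrm{Cap}_{\mathfrak g_q}(A)$ for small $r$-neighborhoods $A^{(r)}$, and letting $r\downarrow 0$ along with an optimization over $\nu$ gives \eqref{hitlower}.

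The main obstacle I expect is bookkeeping around the logarithmic factor in $q_1$ in the critical case $\alpha_1=1$: the metric $\varrho$ is then not purely a power function, so the inverse functions $\check q_1$ and $\check q_2$ have non-polynomial behavior, and one must check that the energy integral $\int\int g_q(|z-z'|)^{-1}\nu(\ud z)\nu(\ud z')$ is indeed what emerges from integrating Gaussian densities against the anisotropic volume element $\ud t\,\ud x$ in the second-moment computation. These computations are exactly the ones for which the criteria of \cite{MR4333508} were designed, so once the four hypotheses above are verified, the heavy analytic work is already absorbed into their framework and the proof reduces to citing \cite[Theorems~3.1 and 3.3]{MR4333508} (or the analogous statements therein).
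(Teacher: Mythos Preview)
Your proposal is essentially the paper's own argument: verify the two-sided canonical pseudo-distance bound (Corollary~\ref{joint}) and the second-order properties (Lemma~\ref{2ndorder}), then invoke the abstract criteria of \cite{MR4333508}. One small correction: the paper cites Theorems~3.3 and~3.5 of \cite{MR4333508} (for the upper and lower bounds respectively), not Theorems~3.1 and~3.3.

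The only substantive point you gloss over is the case split $n>Q$ versus $n<Q$. The criteria in \cite{MR4333508} are stated for gauge functions $g_q$ with $g_q(0)=0$ and potential kernels $\mathfrak g_q$ with $\mathfrak g_q(0)=\infty$, which holds precisely when $n>Q$. When $n<Q$ one has $g_q(0)=\infty$, so $\mathcal H_{g_q}(A)=\infty$ and \eqref{hitupper} is vacuous; simultaneously $\mathfrak g_q(0)=0$, so by the convention in Subsection~\ref{2.3} one has $\operatorname{Cap}_{\mathfrak g_q}(A)=1$ and \eqref{hitlower} reduces to showing $\bp(\hat u(I\times J)\cap A\neq\emptyset)\ge C>0$. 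The paper does this by a direct Paley--Zygmund argument with the indicator-based functional $J_\varepsilon(z)=(2\varepsilon)^{-n}\int_{I\times J}\mathbb 1_{B(z,\varepsilon)}(\hat u(s,y))\,\ud s\,\ud y$ and a finite second-moment bound obtained from $\int_{I\times J}\int_{I\times J}[\varrho((s,y),(t,x))]^{-n}\,\ud s\,\ud y\,\ud t\,\ud x<\infty$ (which uses $n<Q$). Your smoothed-occupation approach with $\nu\in\mathscr P(A)$ would also work here, but the energy bound collapses to a constant since $\mathfrak g_q$ is bounded; you should flag this degeneracy explicitly rather than rely on the general criterion.
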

\begin{proof}
The proof is divided into two cases.

{\bf Case 1: $n>Q$.} Based on Corollary \ref{joint} and Lemma \ref{2ndorder}, the upper bound \eqref{hitupper} 
and the lower bound \eqref{hitlower} follows from \cite[Theorem 3.3]{MR4333508} and \cite[Theorem 3.5]{MR4333508}, respectively.

{\bf Case 2: $n< Q$.} It follows from the proof of \cite[Lemma 5.1]{MR4333508} that $g_q(0)=\lim_{\tau\downarrow  0} g_q(\tau) = \infty$ and then $\mathcal H_{g_q} (A) = \infty$. Hence, \eqref{hitupper} holds without any further information.

The lower bound \eqref{hitlower} can be obtained by the same strategy as in the proof of \cite[Theorem 2.1]{MR2759182}. To avoid the cost of a heavier notation, we write $I:=[t_0,T]$ and $J:=[-M,M]^d$. For $\varepsilon \in(0,1)$ and $z \in A $, we denote by $A^{(\varepsilon)}$ the $\varepsilon$-enlargement of $A$ and define
$$
J_{\varepsilon}(z)=\frac{1}{(2 \varepsilon)^n} \int_I \ud s \int_J  \ud y ~\mathbb1_{B(z,\varepsilon)}(\hat{u}(s, y)).
$$
Since $\left\{J_{\varepsilon}(z)>0\right\} \subset\left\{\hat{u}(I\times J) \cap A^{(\varepsilon)} \neq \emptyset\right\}$, it suffices to estimate the lower bound of $\bp\left(J_{\varepsilon}(z)>0\right)$. 
Because of \eqref{boundsigma}, the density $p_{t,x}(z)$ of $\hat{u}(t, x)$ is bounded uniformly on $(t, x) \in I\times J$. This yields $\ee\left[J_{\varepsilon}(z)\right]>C$.

From Lemma \ref{2ndorder} and \cite[Proposition 3.1]{MR2759182}, we deduce that the density $p_{s, y ; t, x}\left(z_{1}, z_{2}\right) $ of $(\hat{u}(s, y), \hat{u}(t, x))$ satisfies that for $z_{1}, z_{2} \in A^{(\varepsilon)},$
$$
p_{s, y ; t, x}\left(z_{1}, z_{2}\right) \leq \frac{C}{[\varrho((s, y),(t, x))]^n} \exp \left(-\frac{c\left|z_{1}-z_{2}\right|^{2}}{[\varrho((s, y),(t, x))]^{2}}\right).
$$
Consequently, we have
\begin{align*}
\ee\left[\left|J_{\varepsilon}(z)\right|^{2}\right] =&~ \frac{1}{(2 \varepsilon)^{2n}}\int_{I \times J} \ud s \ud y \int_{I \times J} \ud t \ud x ~\ee [ \mathbb1_{B(z,\varepsilon)}(\hat{u}(s, y)) \mathbb1_{B(z,\varepsilon)}(\hat{u}(t, x))]\\
\leq &~C \int_{I \times J} \ud s \ud y \int_{I \times J} \ud t \ud x~[\varrho((s, y),(t, x))]^{-n}\\
\leq&~C \int_{I \times J} \ud s \ud y \int_{I \times J} \ud t \ud x~\left[|t-s|^{\alpha_2}+|x-y|^{\alpha_1\wedge 1}\right]^{-n},
\end{align*}
where in the last line we used the fact that $\ln \frac{C_{d,M}}{|x-y|} \geq 1$. Let $C_{T,M,d} >(2T)\vee(2\sqrt d M)$. Fix $(t,x)\in I\times J$, after the change of variables $(r,w)= (t-s,x-y)$ and by using polar coordinates, we easily obtain
\begin{align*}
&~\int_{I \times J} \ud s \ud y \int_{I \times J} \ud t \ud x~\left[|t-s|^{\alpha_2}+|x-y|^{\alpha_1\wedge 1}\right]^{-n}\\
\le &~C\int_{0}^{C_{T,M,d}} \ud r\int_0^{C_{T,M,d}}\ud \rho~\left[r^{\alpha_2}+\rho^{\alpha_1\wedge 1}\right]^{-n}\rho^{d-1}\\
= &~C\int_{0}^{C_{T,M,d}} \ud \rho\int_0^{C_{T,M,d}\rho^{-\frac{\alpha_1\wedge 1}{\alpha_2}}}\ud r~\left[1+r^{\alpha_2}\right]^{-n}\rho^{d-1+\frac{\alpha_1\wedge1}{\alpha_2}-n(\alpha_1\wedge1)},
\end{align*}
where in the last step we used the change of variable $r\to r\rho^{\frac{\alpha_1\wedge1}{\alpha_2}}$. Then Fubini's theorem and the relation $n<Q$ yield
\begin{align*}
\ee\left[\left|J_{\varepsilon}(z)\right|^{2}\right]\le &~C\int_{C}^{\infty} \ud r\int_0^{(\frac{C_{T,M,d}}{r})^{\frac{\alpha_2}{\alpha_1\wedge 1}}}\ud \rho~\left[1+r^{\alpha_2}\right]^{-n}\rho^{d-1+\frac{\alpha_1\wedge1}{\alpha_2}-n(\alpha_1\wedge1)}\\
\le &~C\int_{C}^{\infty} \left[1+r^{\alpha_2}\right]^{-n}r^{-\alpha_2(Q-n)}\ud r\\
\le &~C\int_{C}^{\infty} r^{-\frac{\alpha_2d}{\alpha_1\wedge1}-1}\ud r<\infty.
\end{align*}
Using the Paley--Zygmund inequality (see \cite[Chapter 3, Lemma 1.4.1]{MR1914748}) leads to
$$
\bp\left(J_{\varepsilon}(z)>0\right)\ge \frac{(\ee\left[J_{\varepsilon}(z)\right])^2}{\ee\left[\left|J_{\varepsilon}(z)\right|^{2}\right]}>C.
$$
Since $\mathfrak{g}_q(0)=\left[g_{q}(|0|)\right]^{-1}=0$, $\operatorname{Cap}_{\mathfrak{g}_q}(A)=1$. This yields the lower bound \eqref{hitlower}.
\end{proof}

Based on Theorem \ref{hittingprobability}, we have the following corollary for the polarity of points when $n\neq Q$.
\begin{corollary}\label{polaritypoints}
Under Assumptions  \ref{dalang-condition} and \ref{noise},  points $z \in \br^n$  are polar for $\hat{u}$ if $n>Q$ and are non-polar if $n<Q$.
\end{corollary}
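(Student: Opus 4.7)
The plan is to apply Theorem \ref{hittingprobability} directly to the singleton $A=\{z\}$ in both regimes, the main task being to understand the behavior of the gauge $g_q$ (and its reciprocal $\mathfrak{g}_q$) near the origin.

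The first step is to record the asymptotics of $g_q(\tau)=\tau^n\check q_1(\tau)^{-d}\check q_2(\tau)^{-1}$ as $\tau\downarrow 0$. From $q_2(\tau)=\tau^{\alpha_2}$ I immediately get $\check q_2(\tau)=\tau^{1/\alpha_2}$. For $q_1(\tau)=(\ln(C_{d,M}/\tau))^{\frac{1}{2}\mathbb 1_{\{\alpha_1=1\}}}\tau^{\alpha_1\wedge 1}$, a direct inversion yields $\check q_1(\tau)\asymp \tau^{1/(\alpha_1\wedge 1)}$ when $\alpha_1\neq 1$, and $\check q_1(\tau)\asymp \tau/\sqrt{\ln(1/\tau)}$ when $\alpha_1=1$. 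Substituting these in, up to at most a logarithmic factor,
\begin{equation*}
g_q(\tau)\asymp \tau^{\,n-d/(\alpha_1\wedge 1)-1/\alpha_2}=\tau^{\,n-Q}\quad \text{as } \tau\downarrow 0,
\end{equation*}
so $\lim_{\tau\downarrow 0}g_q(\tau)=0$ when $n>Q$, and $\lim_{\tau\downarrow 0}g_q(\tau)=+\infty$ when $n<Q$ (the latter limit is already recorded inside the proof of Theorem \ref{hittingprobability}). The logarithmic correction present in the critical case $\alpha_1=1$ does not affect this dichotomy.

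For $n>Q$, I cover $\{z\}$ by the single ball $B(z,\varepsilon)$ in the definition of $\mathcal H_{g_q}$, obtaining $\mathcal H_{g_q}(\{z\})\le \liminf_{\varepsilon\downarrow 0}g_q(2\varepsilon)=0$, and then \eqref{hitupper} forces $\bp(\hat u([t_0,T]\times[-M,M]^d)\cap\{z\}\neq\emptyset)=0$, so $\{z\}$ is polar. For $n<Q$, the reciprocal satisfies $\mathfrak g_q(0)=1/g_q(0)=0\in[0,\infty)$, so by the convention introduced in Subsection \ref{2.3} one has $\operatorname{Cap}_{\mathfrak g_q}(\{z\})=1$; the lower bound \eqref{hitlower} applied to $A=\{z\}$ then yields $\bp(\hat u([t_0,T]\times[-M,M]^d)\cap\{z\}\neq\emptyset)\ge C>0$, so $\{z\}$ is non-polar.

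There is no substantive obstacle here: the corollary is a direct unpacking of the two bounds in Theorem \ref{hittingprobability} for singletons. The only mild subtlety is tracking the inversion of $q_1$ in the critical case $\alpha_1=1$ to confirm that the $\sqrt{\ln(1/\tau)}$ correction cannot overwhelm the polynomial factor $\tau^{n-Q}$ on either side of $n=Q$.
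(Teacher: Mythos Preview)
Your proof is correct and follows the same approach as the paper's: for $n>Q$ you use $\mathcal H_{g_q}(\{z\})=0$ together with \eqref{hitupper}, and for $n<Q$ you use the convention $\operatorname{Cap}_{\mathfrak g_q}(\{z\})=1$ together with \eqref{hitlower}. The paper's proof is essentially the same two lines, only omitting the explicit asymptotics of $g_q$ that you spell out.
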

\begin{proof}
If $n>Q$, the definition of the $g_{q}$-Hausdorff measure implies that $\mathcal{H}_{g_{q}}(\{z\})=0$. Hence, the polarity of $\{z\}$ follows from \eqref{hitupper}.
If $n<Q$, it follows from $\operatorname{Cap}_{\mathfrak{g}_q}(\{z\})=1$ and \eqref{hitlower} that $\{z\}$ is nonpolar. 
\end{proof}

\subsection{Polarity of points in the critical dimension}\label{points-critical}

In this part, we prove that points are polar for $\hat u$ with vanishing initial datum in the critical dimension for the case of $\alpha_1\in(0,1)$. The issue of polarity of points for the case of $\alpha_1\ge1$ is still an unsolved problem. 
To begin with, we introduce the harmonizable representation for the solution of \eqref{FKE}.
Let $W$ be a space-time white noise in $\mathbb R^{d+1}$, and $\widehat W$ be the Fourier transform of $W$ such that for any $g\in L^2(\mathbb R^{d+1},\mathbb C)$ with $g(-\xi)=\overline{g(\xi)}$, 
$$\int_{\mathbb R^{d+1}}g(\xi)\widehat W(\ud \xi):=\int_{\mathbb R^{d+1}}\mathcal Fg(z)W(\ud z);$$
see \cite[Definition 2.1.16]{MR3088856} for more details about the Fourier transform of a white noise. Introduce a centered Gaussian random field $v=\{v(t,x)\}_{(t,x)\in\br_+\times\mathbb R^d}$ with
\begin{align*}
v(t, x)= \sqrt{\beta_{H}}\int_{\br} \int_{\br^d}  |\tau|^{\frac{1}{2}-H} \Upsilon(\eta)^{\frac{1}{2}}\frac{e^{-\mathrm i \langle\eta, x\rangle}(e^{-\mathrm{i} \tau t}-e^{-t\Psi(\eta)})}{\Psi(\eta)-\mathrm{i} \tau} \widehat{W}(\ud \tau, \ud \eta).
\end{align*}
Then the covariance function of $v$ coincides with that of $u$ since
 \begin{align*}
\ee[v(t,x)v(s,y)]
= \beta_{H}\int_\br|\tau|^{1-2H}\ud \tau\int_{\br^d}\mu(\ud\eta)e^{-\mathrm{i}\langle\eta,x-y\rangle}\frac{(e^{-\mathrm{i}\tau t}-e^{-t\Psi(\eta)})(e^{\mathrm{i}\tau s}-e^{-s\Psi(\eta)})}{\Psi(\eta)^2+\tau^2},
\end{align*}
which implies that the solution $u$ of \eqref{FKE} has the same law as $v$.

The following lemma indicates that there is a Gaussian random field $\{v(A, t, x); A \in \mathscr{B}\left(\br_{+}\right),(t, x) \in \br_{+} \times \br^d\}$ which gives a good approximation for the random field $v$.
\begin{lemma}\label{approximationfield}
Assume that Assumption \ref{noise} holds and $\alpha_1\in(0,1).$ Then for any $A\in\mathscr{B}(\br_+)$, the random field
$$
v(A, t, x):=\sqrt{\beta_{H}}\iint_{|\tau|^{\alpha_{2}}\vee  |\eta|^{\alpha_{1}}\in A}|\tau|^{\frac{1}{2}-H}  \Upsilon(\eta)^{\frac{1}{2}} \frac{e^{-\mathrm i \langle\eta, x\rangle}(e^{-\mathrm{i} \tau t}-e^{-t\Psi(\eta)})}{\Psi(\eta)-\mathrm{i} \tau} \widehat{W}(\ud \tau, \ud \eta)
$$
fulfills the following properties:
\item[(i)]  For all $(t,x) \in[t_0,T]\times[-M,M]^d, A \mapsto v(A, t, x)$ is a real-valued white noise based on a measure $\nu$ with
$$
\nu(A)=\beta_{H}\iint_{|\tau|^{\alpha_{2}}\vee  |\eta|^{\alpha_{1}}\in A}\frac{|e^{-\mathrm{i} \tau t}-e^{-t\Psi(\eta)}|^2}{\Psi(\eta)^2+\tau^2} |\tau|^{1-2H}  \Upsilon(\eta) \ud \tau \ud \eta, \quad A\in\mathscr B(\br_+).
$$
Besides, $v(\br_+, t, x)=v(t,x)$ as well as $v(A, \cdot)$ and $v(B, \cdot)$ are independent whenever $A$ and $B$ are disjoint.
\item[(ii)] There is a universal constant $C$ such that for all $0 \le a \le b$ and $\left(s, y\right), (t, x) \in \br_{+} \times \br^d$,
\begin{align*}
\| v([a, b), t, x)-v(t, x)-v([a, b), s, y)+v(s, y) \|_2\le C\Big[a^{\gamma_{2}}\left|t-s\right|+a^{\gamma_1} \sum_{i=1}^{d}\left|x_i-y_i\right|+b^{-1}\Big]
\end{align*}
and
\begin{align*}
\|v([0, a], t, x)-v([0, a], s, y)\|_2\le C\Big(|t-s|+\sum_{i=1}^d|x_i-y_i|\Big),
\end{align*}
where $\gamma_{1}:=\alpha_{1}^{-1}-1$ and $\gamma_{2}:=\alpha_{2}^{-1}-1$.
\end{lemma}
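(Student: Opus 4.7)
\emph{Part (i)} is essentially bookkeeping for Wiener integrals. Since $\Upsilon$ and $\Psi$ are even, the integrand
\[
g(\tau,\eta):=|\tau|^{1/2-H}\Upsilon(\eta)^{1/2}\frac{e^{-\mathrm i\langle\eta,x\rangle}(e^{-\mathrm i\tau t}-e^{-t\Psi(\eta)})}{\Psi(\eta)-\mathrm i\tau}
\]
satisfies the Hermitian symmetry $g(-\tau,-\eta)=\overline{g(\tau,\eta)}$, and the domain $D_A:=\{(\tau,\eta):|\tau|^{\alpha_2}\vee|\eta|^{\alpha_1}\in A\}$ is symmetric in $(\tau,\eta)\mapsto(-\tau,-\eta)$; hence $v(A,t,x)$ is a real Gaussian variable. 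The Wiener isometry for $\widehat W$ gives $\ee[v(A,t,x)^2]=\beta_H\iint_{D_A}|g|^2\ud\tau\ud\eta=\nu(A)$. Disjointness of $D_A,D_B$ for disjoint $A,B$ yields independence, and $v(\br_+,t,x)=v(t,x)$ is immediate.

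\emph{Part (ii), first estimate.} Writing $v(t,x)=v([0,a),t,x)+v([a,b),t,x)+v([b,\infty),t,x)$ decomposes the quantity of interest as
\[
[v([0,a),s,y)-v([0,a),t,x)]+[v([b,\infty),s,y)-v([b,\infty),t,x)],
\]
which I will bound piece by piece via the triangle inequality in $L^2(\Omega)$. For the low-frequency piece, apply the Wiener isometry and split the difference of integrands into a spatial part (using $|e^{-\mathrm i\langle\eta,x\rangle}-e^{-\mathrm i\langle\eta,y\rangle}|\le\sum_i|\eta_i||x_i-y_i|$) and a temporal part (using $|e^{-\mathrm i\tau t}-e^{-\mathrm i\tau s}|\le|\tau||t-s|$ and $|e^{-t\Psi(\eta)}-e^{-s\Psi(\eta)}|\le\Psi(\eta)|t-s|$). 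Then perform the rescaling $\tau=a^{1/\alpha_2}\tau',\eta=a^{1/\alpha_1}\eta'$ and invoke $\Upsilon(a^{1/\alpha_1}\eta')=a^{(\beta-d)/\alpha_1}\Upsilon(\eta')$; a book-keeping calculation matches the exponents to produce the prefactors $a^{\gamma_2}|t-s|$ and $a^{\gamma_1}\sum_i|x_i-y_i|$ (the arithmetic $(2-2H)/\alpha_2+\beta/\alpha_1=2\gamma_2$ uses the definition of $\alpha_1$). Finiteness of the residual dimensionless integrals is ensured by $\alpha_1\in(0,1)$. For the high-frequency piece, bound it by $2\sup_{r,z}\|v([b,\infty),r,z)\|_2=2\sqrt{\nu([b,\infty))}$; the crude bound $|e^{-\mathrm i\tau t}-e^{-t\Psi(\eta)}|^2\le 4$ gives
\[
\nu([b,\infty))\le C\iint_{|\tau|^{\alpha_2}\vee|\eta|^{\alpha_1}\ge b}\frac{|\tau|^{1-2H}\Upsilon(\eta)}{\tau^2+\Psi(\eta)^2}\,\ud\tau\ud\eta,
\]
which after splitting $\{|\tau|^{\alpha_2}\ge b\}$ and $\{|\eta|^{\alpha_1}\ge b\}$ and rescaling as above equals a constant times $b^{-2}$.

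\emph{Part (ii), second estimate.} This is a Lipschitz-type bound on a bounded-frequency region. Differentiating $g$ yields $|\partial_t g|^2\le 2|\tau|^{1-2H}\Upsilon(\eta)$ and $|\partial_{x_i}g|^2\le C_T\,\eta_i^2|\tau|^{1-2H}\Upsilon(\eta)$ (using $|\partial_t(e^{-\mathrm i\tau t}-e^{-t\Psi(\eta)})|\le|\tau|+\Psi(\eta)$ together with the denominator $\sqrt{\tau^2+\Psi(\eta)^2}$, and uniform boundedness of $t\in[t_0,T]$). A mean-value argument combined with the Wiener isometry over the bounded region $D_{[0,a]}$ gives the required linear bound, the constant $C$ absorbing the finite integrals on the reference scale.

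\emph{Main obstacle.} The principal technical challenge lies in the large/small-$|\eta|$ transition: $\Psi(\eta)\sim|\eta|^\gamma$ near zero but $\sim|\eta|^{\alpha+\gamma}$ at infinity, while $\Upsilon$ may blow up at zero (for $\beta<d$) and the weight $|\tau|^{1-2H}$ is singular at the origin. Matching the algebraic identities $(2-2H)/\alpha_2+\beta/\alpha_1=2\gamma_2$ and the analogous one delivering $a^{\gamma_1}$ and $b^{-1}$, while establishing convergence of the rescaled integrals in every subregime, is the delicate bookkeeping that ultimately uses $\alpha_1\in(0,1)$ in an essential way.
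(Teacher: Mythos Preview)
Your proposal is correct and follows essentially the same route as the paper: the low/high-frequency decomposition $v(t,x)-v([a,b),t,x)=v_1(a,t,x)+v_2(b,t,x)$, mean-value bounds on the integrand over $D_1(a)$ followed by the rescaling $(\tau,\eta)\mapsto(a^{1/\alpha_2}\tau',a^{1/\alpha_1}\eta')$, and the crude numerator bound plus rescaling on $D_2(b)$ are exactly what the paper does (the paper splits the integrand into real/imaginary parts $\varphi_1,\varphi_2$ before applying the mean-value theorem, whereas you split spatial/temporal first, but this is cosmetic). The paper likewise dismisses the second estimate in (ii) with ``similar to $f_1$,'' matching your mean-value argument on the bounded region.
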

\begin{proof}
(i) It is easy to check that $\nu$ is a measure on $\br_+$ and for any $(t,x) \in[t_0,T]\times[-M,M]^d,$ $\{v(A, t, x)\}_{A\in\mathscr B(\br_+)}$ is a centered Gaussian random field with covariance function
$$\ee[v(A, t, x)v(B, t, x)]=\nu(A\cap B),$$
and if $A \cap B=\emptyset$, then $v(A \cup B,t,x)=v(A,t,x)+v(B,t,x)$ as well as $v(A,t,x)$ and $v(B,t,x)$ are independent. 

(ii) Let
\begin{align*}
v_{1}(a, t, x):=&\sqrt{\beta_{H}}\iint_{|\tau|^{\alpha_{2}}\vee  |\eta|^{\alpha_{1}}<a}|\tau|^{\frac{1}{2}-H} \Upsilon(\eta)^{\frac{1}{2}}\frac{e^{-\mathrm i \langle\eta, x\rangle}(e^{-\mathrm{i} \tau t}-e^{-t\Psi(\eta)})}{\Psi(\eta)-\mathrm{i} \tau} \widehat{W}(\ud \tau, \ud \eta) ,\\
v_{2}(b, t, x):=&\sqrt{\beta_{H}}\iint_{|\tau|^{\alpha_{2}}\vee  |\eta|^{\alpha_{1}}\ge b}|\tau|^{\frac{1}{2}-H}  \Upsilon(\eta)^{\frac{1}{2}}\frac{e^{-\mathrm i \langle\eta, x\rangle}(e^{-\mathrm{i} \tau t}-e^{-t\Psi(\eta)})}{\Psi(\eta)-\mathrm{i} \tau} \widehat{W}(\ud \tau, \ud \eta) .
\end{align*}
Then we have
$$
v([a, b), t, x)-v(t, x)-v([a, b) ,s, y)+v(s, y)=v_{1}(a, s, y)-v_{1}(a, t, x)+v_{2}(b, s, y)-v_{2}(b, t, x).
$$
Set
\begin{align*}
f_{1}\left(a, t, x, s, y\right)&=\ee|v_{1}(a, t, x)-v_{1}(a, s,y)|^{2} ,\\
f_{2}\left(b, t, x, s, y\right)&=\ee|v_{2}(b, t, x)-v_{2}(b, s, y)|^2.
\end{align*}
We shall estimate these two quantities separately. First, set
$$
D_{1}(a):=\left\{(\tau, \eta) \in \br \times \br^d: |\tau|^{\alpha_{2}}\vee|\eta|^{\alpha_{1}}<a\right\}.
$$
Then
\begin{align*}
&~f_{1}(a, t,x, s, y) \\
=&~\beta_{H}\iint_{D_{1}(a)}|\tau|^{1-2H}  \frac{\left|e^{-\mathrm i \langle\eta ,x-y\rangle}(e^{-\mathrm{i} \tau t}-e^{-t\Psi(\eta)})-(e^{-\mathrm{i} \tau s}-e^{-s\Psi(\eta)})\right|^2}{\Psi(\eta)^2+ \tau^2} \ud \tau \mu(\ud \eta) \\
=&~\beta_{H}\iint_{D_{1}(a)}|\tau|^{1-2H} \frac{\varphi_{1}(t, x, \tau, \eta)^{2}+\varphi_{2}(t, x, \tau, \eta)^{2}}{\Psi(\eta)^2+ \tau^2}\ud \tau \mu(\ud \eta),
\end{align*}
where
\begin{align*}
\varphi_{1}(t, x, \tau, \eta):=& \cos (\langle\eta,x-y\rangle+\tau t)- e^{-t\Psi(\eta)}\cos \left(\langle\eta,x-y\rangle\right)-\cos \left(\tau s\right)+e^{-s\Psi(\eta)}, \\
\varphi_{2}(t, x, \tau, \eta):=&-\sin (\langle \eta,x-y\rangle+\tau t)+\sin( \tau s)+e^{-t \Psi(\eta)} \sin \left(\langle\eta,x-y\rangle\right).
\end{align*}
Observe that $\varphi_{1}\left(s, y, \tau, \eta\right)=0=\varphi_{2}\left(s, y, \tau, \eta\right)$, and
\begin{align*}
\partial_t \varphi_{1}(t, x, \tau, \eta)&=-\tau \sin (\langle\eta,x-y\rangle+\tau t)+\Psi(\eta)e^{-t\Psi(\eta)}\cos(\langle\eta,x-y\rangle),\\
\nabla_x \varphi_{1}(t, x, \tau, \eta)&=-\eta \sin (\langle\eta,x-y\rangle+\tau t)+\eta e^{-t\Psi(\eta)} \sin (\langle\eta,x-y\rangle), \\
\partial_t \varphi_{2}(t, x, \tau, \eta)&=-\tau \cos (\langle \eta,x-y\rangle+\tau t)-\Psi(\eta)e^{-t \Psi(\eta)} \sin \left(\langle\eta,x-y\rangle\right), \\
\nabla_x\varphi_{2}(t, x, \tau, \eta)&=-\eta \cos (\langle\eta,x-y\rangle+\tau t)+\eta e^{-t\Psi(\eta)}  \cos (\langle\eta,x-y\rangle).
\end{align*}
Therefore, for $i=1,2$, 
$$
\left|\partial_t \varphi_i\right| \le|\tau|+\Psi(\eta), \quad\left|\nabla_x \varphi_i \right| \le 2|\eta|,
$$
and the mean value theorem implies that
$$
\left|\varphi_i(t, x, \tau, \eta)\right| \le\left(|\tau|+\Psi(\eta)\right)\left|t-s\right|+2|\eta|\left|x-y\right|.
$$
So
\begin{align}\label{f_1}
f_{1}\left(a, t, x, s, y\right) \le & ~\beta_{H}\iint_{D_{1}(a)}|\tau|^{1-2H}\frac{8\left(\tau^{2}+\Psi(\eta)^{2}\right)\left|t-s\right|^{2}+16|\eta|^{2}\left|x-y\right|^{2}}  {\Psi(\eta)^{2}+\tau^{2}} \ud \tau \mu(\ud \eta)\notag\\
\le &~ 8\beta_{H}|t-s|^{2} \iint_{D_{1}(a)}|\tau|^{1-2H} \ud \tau \mu(\ud \eta) \notag\\
&~+16\beta_{H}\left|x-y\right|^{2} \iint_{D_{1}(a)}\frac{|\eta|^2|\tau|^{1-2H} }{\Psi(\eta)^{2}+\tau^{2}} \ud \tau \mu(\ud \eta).
\end{align}
Using the change of variables $r=a^{-\frac{1}{\alpha_2}}\tau$ and $\xi=a^{-\frac{1}{\alpha_1}}\eta$, we obtain
\begin{align*}
\iint_{D_{1}(a)}|\tau|^{1-2H} \ud \tau \mu(\ud \eta) 
=a^{2\gamma_2}\iint_{D_{1}(1)}|r|^{1-2H} \ud r \mu(\ud \xi)
\le C_Ha^{2\gamma_2}.
\end{align*}
Similarly,  by the change of variables $r=a^{-\frac{1}{\alpha_2}}\tau$, $\xi=a^{-\frac{1}{\alpha_1}}\eta$ and $\zeta=\xi |r|^{-\frac{1}{\alpha+\gamma}}$, we have 
\begin{align}\label{eq:f1-2}\notag
&\quad\ \iint_{D_{1}(a)}\frac{|\eta|^2|\tau|^{1-2H} }{\Psi(\eta)^{2}+\tau^{2}} \ud \tau \mu(\ud \eta)\\\notag
&\le\iint_{D_{1}(a)}\frac{|\eta|^2|\tau|^{1-2H}} {|\eta|^{2(\alpha+\gamma)}+\tau^{2}} \ud \tau \mu(\ud \eta)\\\notag
&=a^{2\gamma_1}\iint_{D_{1}(1)}\frac{|\xi|^2 |r|^{1-2H}}{|\xi|^{2(\alpha+\gamma)}+r^{2}} \ud r \mu(\ud \xi)\\
&=2a^{2\gamma_1}\int_0^1 r^{\frac{2}{\alpha+\gamma}-2\alpha_2-1}\ud r\int_{|\zeta|<r^{-\frac{1}{\alpha+\gamma}}}\frac{ |\zeta|^2 }{|\zeta|^{2(\alpha+\gamma)}+1}  \mu(\ud \zeta).
\end{align}
In order to deal with the integral w.r.t. $\zeta$, we separate the set $\{|\zeta|<r^{-\frac{1}{\alpha+\gamma}}\}$ into $\{|\zeta|\le1\}$ and $\{1<|\zeta|<r^{-\frac{1}{\alpha+\gamma}}\}$, and obtain
\begin{align*}
\int_{|\zeta|<r^{-\frac{1}{\alpha+\gamma}}}\frac{ |\zeta|^2 }{|\zeta|^{2(\alpha+\gamma)}+1}  \mu(\ud \zeta)\le C+\int_{1<|\zeta|<r^{-\frac{1}{\alpha+\gamma}}}\frac{ |\zeta|^2 }{|\zeta|^{2(\alpha+\gamma)}+1}  \mu(\ud \zeta),
\end{align*}
because under Assumption \ref{dalang-condition}, the integral $\int_{|\zeta|\le 1}\mu(\ud\zeta)$ is finite. Next, we apply the polar coordinate transform and Lemma \ref{Upsilon} (iii) to derive
\begin{align*}
 \int_{1<|\zeta|<r^{-\frac{1}{\alpha+\gamma}}}\frac{ |\zeta|^2 }{|\zeta|^{2(\alpha+\gamma)}+1}  \mu(\ud \zeta)
&=\int_1^{r^{-\frac{1}{\alpha+\gamma}}}\rho^{2-2(\alpha+\gamma)}\rho^{\beta-1}\ud \rho\int_{\mathbb S^{d-1}}\Upsilon(z)\sigma(\ud z)\\
&\le\begin{cases}
Cr^{-\frac{2+\beta}{\alpha+\gamma}+2},&\text{if}\quad1-(\alpha+\gamma)+\beta/2>0,\\
C|\ln r|,&\text{if}\quad1-(\alpha+\gamma)+\beta/2=0,\\
C,&\text{if}\quad1-(\alpha+\gamma)+\beta/2<0.\end{cases}
\end{align*}
It follows from the assumption $\alpha_1\in(0,1)$ that $\frac{2}{\alpha+\gamma}-2\alpha_2-1=\frac{2-2\alpha_1}{\alpha+\gamma}-1>-1$ and $\frac{2}{\alpha+\gamma}-2\alpha_2-1-\frac{2+\beta}{\alpha+\gamma}+2=1-2H>-1$, which implies that
\begin{align*}
\int_0^1 r^{\frac{2}{\alpha+\gamma}-2\alpha_2-1}\ud r\int_{|\zeta|<r^{-\frac{1}{\alpha+\gamma}}}\frac{ |\zeta|^2 }{|\zeta|^{2(\alpha+\gamma)}+1}  \mu(\ud \zeta)<\infty.
\end{align*}
As a consequence of \eqref{f_1}--\eqref{eq:f1-2}, we derive
$$
f_{1}\left(a, t, x, s, y\right) \le C\left[a^{2 \gamma_2}\left(t-s\right)^{2}+a^{2 \gamma_1}\left|x-y\right|^{2}\right] . 
$$
Set $D_{2}(b):=\left\{(\tau, \eta)\in \br \times \br^d: |\tau|^{\alpha_{2}}\vee|\eta|^{\alpha_{1}}\ge b\right\}$.
Similarly to the estimate of $f_1$, we have
\begin{align*}
f_2\left(b, t, x, s,y\right)
=\beta_{H}\iint_{D_{2}(b)} |\tau|^{1-2H} \frac{\varphi_{1}(t, x, \tau, \eta)^{2}+\varphi_{2}(t, x, \tau, \eta)^{2}}{\Psi(\eta)^2+ \tau^2}\ud \tau \mu(\ud \eta).
\end{align*}
Observing that $\left|\varphi_{1}\right| \le 4$ and $\left|\varphi_{2}\right| \le 3$, and using the change of variables $r=b^{-\frac{1}{\alpha_2}}\tau$ and $\xi=b^{-\frac{1}{\alpha_1}}\eta$, we see that
\begin{align*}
f_{2}\left(b, t, x, s, y\right)& \le C\iint_{D_{2}(b)} \frac{|\tau|^{1-2H} }{\Psi(\eta)^2+ \tau^2}\ud \tau \mu(\ud \eta) \le C\iint_{D_{2}(b)}  \frac{|\tau|^{1-2H}}{|\eta|^{2(\gamma+\alpha)}+ \tau^2}\ud \tau \mu(\ud \eta)\\
&= C b^{-2} \iint_{D_{2}(1)} \frac{|r|^{1-2H}  }{|\xi|^{2(\gamma+\alpha)}+ r^2}\ud r \mu(\ud \xi).
\end{align*}
By decomposing the set $D_2(1)$ into $\{(r, \xi)\in \br \times \br^d: |r|^{\alpha_{2}}\ge|\xi|^{\alpha_{1}},|r|\ge1\}$ and $\{(r, \xi)\in \br \times \br^d: |r|^{\alpha_{2}}<|\xi|^{\alpha_{1}},|\xi|\ge1\}$, and using the change of variables $\eta=|r|^{-\frac{1}{\alpha+\gamma}}\xi$ and $\tau=|\xi|^{-(\alpha+\gamma)}r$,
\begin{align*}
&~\iint_{D_{2}(1)} \frac{|r|^{1-2H}  }{|\xi|^{2(\gamma+\alpha)}+ r^2}\ud r \mu(\ud \xi)\\
\le&~\int_{|r|\ge1} |r|^{-1-2H} \ud r\int_{|\xi|^{\alpha_1}\le|r|^{\alpha_2}}\mu(\ud \xi)+\int_{|\xi|\ge1} |\xi|^{-2(\gamma+\alpha)}\mu(\ud \xi)\int_{|r|^{\alpha_{2}}<|\xi|^{\alpha_{1}}} |r|^{1-2H} \ud r\\
=&~\int_{|r|\ge1} |r|^{-1-2\alpha_2} \ud r\int_{|\eta|\le1}\mu(\ud \eta)+\int_{|\xi|\ge1}|\xi|^{-2(\alpha+\gamma)H} \mu( \ud \xi)\int_{|\tau|\le1} |\tau|^{1-2H} \ud \tau\\
<&~\infty.
\end{align*}
Thus, we conclude that
$f_{2}(b, t, x, s,y)\le Cb^{-2}.$
The estimate of $\|v([0, a], t, x)-v([0, a], s, y)\|_2$ is similar to that of  $f_1$ and is omitted. The proof is complete.
\end{proof}

Let $B_r(t,x)$ be the open $\varrho$-ball in $\br_+\times \br^d$ of radius $ r$ centered at $(t, x)$. Now we study the regularity of the covariance function of $v$.
\begin{lemma}\label{corbound}
Assume that Assumption \ref{noise} holds and $\alpha_1\in(0,1).$ Let $B \subset\br_+\times \br^{d}$ be a compact box. Fix $(t, x) \in B$. Let $t^{\prime}=t-2(2 \delta)^{\alpha_{2}^{-1}}$ and $x^{\prime}=x$ (where $\delta$ is small enough so that $t^{\prime}>0$). Then there exists some constant $C:=C_{H,\gamma,\alpha,d,\delta}$ such that for all $\left(s_{1}, y\right),\left(s_{2}, z\right) \in$ $B_{2\delta}(t, x)$, 
\begin{align}\label{eq:Lemma52}
\left|\ee\left[(v(s_{1}, y)-v(s_{2}, z)) v(t^{\prime}, x^{\prime})\right] \right|\le C\big(\left|y-z\right|^{\delta_1}+\left|s_{1}-s_{2}\right|^{\delta_2}\big)
\end{align}
with $\delta_1\in(\alpha_1,1]$ and $\delta_2\in(\alpha_2,1]$.
\end{lemma}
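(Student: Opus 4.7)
The plan is to exploit the frequency decomposition of Lemma \ref{approximationfield} together with Cauchy--Schwarz, balancing low- and high-frequency contributions via a free parameter $a>0$. Using the independence of $v(A,\cdot)$ and $v(B,\cdot)$ for disjoint $A,B$, I would write $v=v([0,a])+v([a,\infty))$ and split the cross-covariance as
\begin{align*}
\ee[(v(s_1,y)-v(s_2,z))v(t',x')] &= I_1(a) + I_2(a),\\
I_1(a) &:= \ee[(v([0,a],s_1,y)-v([0,a],s_2,z))\,v([0,a],t',x')],\\
I_2(a) &:= \ee[(v([a,\infty),s_1,y)-v([a,\infty),s_2,z))\,v([a,\infty),t',x')].
\end{align*}
To decouple the temporal and spatial improvements, I would first split the left-hand side as $v(s_1,y)-v(s_2,z)=[v(s_1,y)-v(s_2,y)]+[v(s_2,y)-v(s_2,z)]$ and treat the two increments separately.

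For $I_1(a)$, Cauchy--Schwarz combined with Lemma \ref{approximationfield}(ii)---used in the sharper $a$-dependent form that its proof actually yields, namely $\|v([0,a],s_1,y)-v([0,a],s_2,z)\|_2 \le C(a^{\gamma_2}|s_1-s_2|+a^{\gamma_1}|y-z|)$---and the uniform bound $\|v([0,a],t',x')\|_2 \le \|v(t',x')\|_2 \le C$ from Theorem \ref{wellpose} give
\[
|I_1(a)| \le C\bigl(a^{\gamma_2}|s_1-s_2| + a^{\gamma_1}|y-z|\bigr).
\]

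For $I_2(a)$, I would bound $\|v([a,\infty),t',x')\|_2$ directly from its harmonizable representation, using $|e^{-\mathrm i \tau t'}-e^{-t'\Psi(\eta)}|^2 \le 4$ and the scaling $\tau=a^{1/\alpha_2}r$, $\eta=a^{1/\alpha_1}\xi$, after splitting the domain $\{|\tau|^{\alpha_2}\vee|\eta|^{\alpha_1}\ge a\}$ into $\{|\eta|^{\alpha_1}\ge a\}$ and $\{|\tau|^{\alpha_2}\ge a,\;|\eta|^{\alpha_1}<a\}$. The exponents of $a$ collapse via the identities $\alpha_2=\alpha_1/(\alpha+\gamma)$ and $\beta/\alpha_1-2H/\alpha_2=-2$, so that both pieces are $O(a^{-2})$; this yields $\|v([a,\infty),t',x')\|_2 \le Ca^{-1}$. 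The same argument bounds $\|v([a,\infty),s_1,y)-v([a,\infty),s_2,z)\|_2 \le Ca^{-1}$, and Cauchy--Schwarz then gives $|I_2(a)|\le Ca^{-2}$.

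Finally, I would optimize $a$ separately for the two increments. For the temporal piece (with $y=z$), the choice $a=|s_1-s_2|^{-1/(\gamma_2+2)}$ balances the two contributions and yields a bound of order $|s_1-s_2|^{2/(\gamma_2+2)}=|s_1-s_2|^{2\alpha_2/(1+\alpha_2)}$; similarly, for the spatial piece ($s_1=s_2$), $a=|y-z|^{-1/(\gamma_1+2)}$ gives $|y-z|^{2\alpha_1/(1+\alpha_1)}$. Setting $\delta_j:=2\alpha_j/(1+\alpha_j)$, the hypothesis $\alpha_1\in(0,1)$ (together with $\alpha_2<1$, which follows from $\alpha_1<(\alpha+\gamma)H<\alpha+\gamma$) forces $\delta_j\in(\alpha_j,1]$, as required. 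The main obstacle will be the scaling verification $\|v([a,\infty),t',x')\|_2\le Ca^{-1}$: one must handle the nonhomogeneous symbol $\Psi(\eta)=|\eta|^{\gamma}(1+|\eta|^2)^{\alpha/2}$ in concert with the scaling of $\Upsilon$, using $\Psi(\eta)\ge|\eta|^{\alpha+\gamma}$ on the high-frequency region $|\eta|\ge a^{1/\alpha_1}\ge 1$ to recover the clean scaling, and checking integrability in the complementary region by a polar-coordinate computation that uses Lemma \ref{Upsilon}(iii).
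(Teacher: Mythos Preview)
Your approach is correct and is genuinely different from the paper's. The paper proceeds by writing $f(s,y):=\ee[v(s,y)v(t',x')]$ explicitly as a time-space integral using the covariance formula for $u$, and then estimates $|f(s,y)-f(s,z)|$ and $|f(s_1,y)-f(s_2,y)|$ directly: for the spatial increment it uses the trigonometric identity $\cos a-\cos b=-2\sin\tfrac{a-b}{2}\sin\tfrac{a+b}{2}$ together with the low/high frequency split at $|\xi|=|y-z|^{-1}$; for the temporal increment it splits the $r$-integral at $s_1$ and crucially uses the separation $s-t'\ge (2\delta)^{1/\alpha_2}$ to kill the singularity of $|r-w|^{2H-2}$. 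This yields exponents essentially $\delta_j=2\alpha_j\wedge 1$ (up to a logarithm at equality).

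Your route bypasses all of that: you feed Lemma~\ref{approximationfield} back into the problem, use independence of $v(A,\cdot)$ and $v(B,\cdot)$ to split the cross-covariance cleanly, apply Cauchy--Schwarz on each piece, and then optimize over the frequency cutoff $a$. Two remarks. First, you can read both ingredients directly from the statement of Lemma~\ref{approximationfield}(ii) rather than its proof: taking $b\to\infty$ in the first display gives $\|v([0,a),s_1,y)-v([0,a),s_2,z)\|_2\le C(a^{\gamma_2}|s_1-s_2|+a^{\gamma_1}|y-z|)$, while taking $a=0$ gives $\|v([b,\infty),s_1,y)-v([b,\infty),s_2,z)\|_2\le Cb^{-1}$; since $v(0,\cdot)=0$, the latter with $(s_2,z)=(0,x')$ also yields $\|v([b,\infty),t',x')\|_2\le Cb^{-1}$. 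Second, $\Psi(\eta)\ge|\eta|^{\alpha+\gamma}$ holds globally, so the clean scaling for the high-frequency piece goes through for all $a>0$ without needing $a\ge 1$. Your exponents $\delta_j=2\alpha_j/(1+\alpha_j)$ are slightly weaker than the paper's $2\alpha_j\wedge 1$, but both lie in $(\alpha_j,1]$, which is all the downstream application (Theorem~\ref{critical} via \cite{MR3737922,MR4317713}) requires. A notable by-product of your argument is that it never uses the specific choice $t'=t-2(2\delta)^{1/\alpha_2}$; any fixed reference point with bounded variance would do.
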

\begin{proof}
For $(s, y) \in B_{2\delta}(t, x)$, define $f(s, y):=\ee[v(s, y) v\left(t^{\prime}, x^{\prime}\right)]$. Since $u$ and $v$ have the same law, we have
\begin{align*}
f(s, y)=&~\alpha_{H} \int_{0}^s\ud r\int_{0}^{t^{\prime}}\ud w|r-w|^{2H-2}\int_{\br^d}\mu(\ud \xi)e^{-(s-r)\Psi(\xi)}e^{-(t^{\prime}-w)\Psi(\xi)}e^{\mathrm{i}\langle\xi,y-x^{\prime}\rangle}\\
=&~\alpha_{H} \int_{0}^s\ud r\int_{0}^{t^{\prime}}\ud w|r-w|^{2H-2}\int_{\br^d}\mu(\ud \xi)e^{-(s-r)\Psi(\xi)}e^{-(t^{\prime}-w)\Psi(\xi)}\cos(\langle\xi,y-x^{\prime}\rangle).
\end{align*}
The Cauchy–Schwarz inequality and the Young inequality together with Lemma \ref{estofh} yield
\begin{align}\label{corest}
&~\alpha_{H} \int_{0}^{s} \ud r \int_{0}^{t^{\prime}}\ud w |r-w|^{2 H-2} e^{-(s-r) \Psi(\xi)} e^{-\left(t^{\prime}-w\right) \Psi(\xi)}\notag \\
\le&~\frac{1}{2} N_{s}(\xi)+\frac{1}{2} N_{t^{\prime}}(\xi)
\le C\left(\frac{1}{1+|\xi|^{2}}\right)^{(\alpha+\gamma) H}.
\end{align}

{\bf Step 1:} We prove \eqref{eq:Lemma52} for the case of $s_1=s_2$.

Using \eqref{corest} and the identity $\cos a-\cos b=-2\sin(\frac{a-b}{2})\sin(\frac{a+b}{2})$ for $a,b\in\mathbb R$ gives
\begin{align*}
&\quad\ |f(s, y)-f(s, z)|\\
&\le   C\int_{\mathbb{R}^{d}} \left(\frac{1}{1+|\xi|^{2}}\right)^{(\alpha+\gamma) H}|\cos (\langle\xi, y-x^{\prime}\rangle)-\cos (\langle\xi, z-x^{\prime}\rangle)| \mu(\ud \xi) \\
&\le C \int_{\mathbb{R}^{d}} \left(\frac{1}{1+|\xi|^2} \right)^{(\alpha+\gamma) H} \Big|\sin\big(\frac{1}{2} \langle\xi, y-z\rangle\big)\Big|\mu(\ud \xi).
\end{align*}
Furthermore, by using $|\sin a|\le |a|\wedge 1$ for $a\in \mathbb R$, we obtain
\begin{align*}
|f(s, y)-f(s, z)|
&\le C|y-z|\int_{|\xi| \le \frac{1}{|y-z|}} \left(\frac{1}{1+|\xi|^2} \right)^{(\alpha+\gamma) H}|\xi| \mu(\ud \xi)\\
&\quad~+C\int_{|\xi| > \frac{1}{|y-z|}} \left(\frac{1}{1+|\xi|^2} \right)^{(\alpha+\gamma) H} \mu(\ud \xi)\\
&=:J_1+J_2.
\end{align*}
Next, we estimate $J_1$ and $J_2$ separately. For $J_2$, the scaling property \eqref{scaling} implies
\begin{align*}
J_{2} & \le C \int_{|\xi| >\frac{1}{|y-z|}} |\xi|^{-2(\alpha+\gamma)H} \mu(\ud \xi)\le C |y-z|^{2\alpha_1}.
\end{align*}

To estimate $J_{1}$, we observe that $\left|y-z\right|^{-1} >\frac{1}{2}(2\delta)^{-\alpha_1^{-1}}:=c_{0}(\delta)$ holds for any $(s,y), (s,z) \in B_{2 \delta}(t, x)$. Thus, it holds that
\begin{align*}
J_{1} 
& \le C|y-z|\bigg(\int_{|\xi|<c_{0}(\delta)}\left|\xi\right|\mu(\ud \xi)+\int_{c_{0}(\delta) \le\left|\xi\right| \le \frac{1}{|y-z|}}|\xi|^{-2(\alpha+\gamma)H+1}\mu(\ud \xi)\bigg)\\
& \le C_\delta|y-z|\bigg(1+\int_{c_{0}(\delta)}^{\frac{1}{|y-z|}}r^{-2\alpha_1}\ud r\int_{\mathbb S^{d-1}}\Upsilon(z)\sigma(\ud z)\bigg),
\end{align*}
where the integral w.r.t. $r$ can be bounded as
\begin{align*}
\int_{c_{0}(\delta)}^{\frac{1}{|y-z|}}r^{-2\alpha_1}\ud r \le\begin{cases}
 C|y-z|^{1-2\alpha_1},&\text{if}\quad 2\alpha_1<1,\\
 \ln \frac{1}{c_0(\delta)|y-z|},&\text{if}\quad 2\alpha_1=1,\\
 C,&\text{if}\quad 2\alpha_1>1.
\end{cases}
\end{align*}
Hence, there is a $\delta_1\in(\alpha_1,1)$ such that
$$J_1\le C|y-z|^{\delta_1}.$$
The estimates of $J_1$ and $J_2$ yield that there exists $\delta_1\in(\alpha_1,1)$ such that $|f(s, y)-f(s, z)|\le C|y-z|^{\delta_1},$ which completes the proof of Step 1.

{\bf Step 2:} We prove \eqref{eq:Lemma52} for the case of $s_1\neq s_2$.

Assume without loss of generality that $s_1<s_2$. Then 
\begin{align*}
&~f(s_1, y)-f(s_2,y)\\
=\;&~\alpha_{H} \int_{0}^{s_1}\int_{0}^{t^{\prime}}|r-w|^{2H-2}\ud r\ud w\int_{\br^d}\mu(\ud \xi)\big[e^{-(s_1-r)\Psi(\xi)}-e^{-(s_2-r)\Psi(\xi)}\big]e^{-(t^{\prime}-w)\Psi(\xi)}\cos(\langle\xi,y-x^{\prime}\rangle)\\
\;&~-\alpha_{H} \int_{s_1}^{s_2}\int_{0}^{t^{\prime}}|r-w|^{2H-2}\ud r\ud w\int_{\br^d}\mu(\ud \xi)e^{-(s_2-r)\Psi(\xi)}e^{-(t^{\prime}-w)\Psi(\xi)}\cos(\langle\xi,y-x^{\prime}\rangle)\\
=:& ~I_1+I_2.
\end{align*}
Making use of \eqref{corest} and the inequality $1-e^{-\theta}\le1\wedge \theta$ for $\theta\ge0$, we deduce
\begin{align*}
\left|I_1\right| \le \;&~\alpha_{H} \int_{0}^{s_{1}} \int_{0}^{t^{\prime}}|r-w|^{2 H-2} \ud r \ud w \int_{\mathbb{R}^{d}} \mu(\mathrm{d} \xi) e^{-\left(s_{1}-r\right) \Psi(\xi)}\big[1-e^{-\left(s_{2}-s_{1}\right) \Psi(\xi)}\big] e^{-\left(t^{\prime}-w\right) \Psi(\xi)}\\
\le \;&~C \int_{\mathbb{R}^{d}}\big[1-e^{-\left(s_{2}-s_{1}\right) \Psi(\xi)}\big]\left(\frac{1}{1+|\xi|^{2}}\right)^{(\alpha+\gamma) H} \mu(\mathrm{d} \xi) \\
\le \;&~C\left|s_{2}-s_{1}\right| \int_{|\xi| \le\left|s_{2}-s_{1}\right|^{-\frac{1}{\alpha+\gamma}}}\Psi(\xi)\left(\frac{1}{1+|\xi|^{2}}\right)^{(\alpha+\gamma) H} \mu(\mathrm{d} \xi)\\
\;&~+C \int_{|\xi| >\left|s_{2}-s_{1}\right|^{-\frac{1}{\alpha+\gamma}}}\left(\frac{1}{1+|\xi|^{2}}\right)^{(\alpha+\gamma) H} \mu(\mathrm{d} \xi).
\end{align*}
Further, applying the polar coordinate transform and Lemma \ref{Upsilon} (iii) yields
\begin{align}\label{eq:I12}\notag
I_{1,2}:=& ~\int_{|\xi| >|s_{2}-s_{1}|^{-\frac{1}{\alpha+\gamma}}}\left(\frac{1}{1+|\xi|^{2}}\right)^{(\alpha+\gamma) H} \mu(\mathrm{d} \xi) \\\notag
=&~\int_{|s_{2}-s_{1}|^{-\frac{1}{\alpha+\gamma}}}^\infty\left(\frac{1}{1+\rho^{2}}\right)^{(\alpha+\gamma) H} \rho^{\beta-1}\ud\rho\int_{\mathbb S^{d-1}}\Upsilon(z)
\sigma(\mathrm{d} z)\\
\le &~C|s_1-s_2|^{2\alpha_2},
\end{align}
and similarly,
\begin{align*}
\tilde I_{1,1}:=&~ |s_1-s_2|\int_{|\xi| \le|s_{2}-s_{1}|^{-\frac{1}{\alpha+\gamma}}}|\xi|^{\alpha+\gamma}\left(\frac{1}{1+|\xi|^{2}}\right)^{(\alpha+\gamma) H} \mu(\mathrm{d} \xi)\\
\le&~ |s_1-s_2|\int_0^{|s_{2}-s_{1}|^{-\frac{1}{\alpha+\gamma}}}\left(\frac{1}{1+\rho^{2}}\right)^{(\alpha+\gamma) H} \rho^{\alpha+\gamma+\beta-1}\ud\rho.
\end{align*}
Since $|s_1-s_2|^{-\frac{1}{\alpha+\gamma}}> \frac{1}{2}(2\delta)^{-\frac{1}{\alpha_2(\alpha+\gamma)}}=:c_1(\delta)$ for any $(s_1,y)$, $(s_2,y)\in B_{2\delta}(t,x)$, we have
\begin{align*}
&\quad\ \int_0^{|s_{2}-s_{1}|^{-\frac{1}{\alpha+\gamma}}}\left(\frac{1}{1+\rho^{2}}\right)^{(\alpha+\gamma) H} \rho^{\alpha+\gamma+\beta-1}\ud\rho\\
&\le C\int_0^{c_1(\delta)}\rho^{\alpha+\gamma+\beta-1}\ud\rho+C \int_{c_1(\delta)}^{|s_{2}-s_{1}|^{-\frac{1}{\alpha+\gamma}}}\rho^{\alpha+\gamma-2\alpha_1-1}\ud\rho,
\end{align*}
where the first integral is bounded and the second integral can be bounded as
\begin{align*}
 \int_{c_1(\delta)}^{|s_{2}-s_{1}|^{-\frac{1}{\alpha+\gamma}}}\rho^{\alpha+\gamma-2\alpha_1-1}\ud\rho\le\begin{cases}
 C|s_1-s_2|^{2\alpha_2-1},&\text{if}\quad\alpha+\gamma-2\alpha_1>0,\\
 -\frac{1}{\alpha+\gamma}\ln |s_1-s_2|-\ln c_1(\delta),&\text{if}\quad\alpha+\gamma-2\alpha_1=0,\\
 C,&\text{if}\quad\alpha+\gamma-2\alpha_1<0.
\end{cases}
\end{align*}
Gathering the above estimates, we conclude
$$
\tilde I_{1,1}\le C_{\varepsilon}|s_1-s_2|^{2\alpha_2\wedge (1-\varepsilon\mathbb 1_{\{2\alpha_1=\alpha+\gamma\}})}.
$$
Here and after, $\varepsilon\in(0,1-\alpha_1)$ is a sufficiently small constant.
Consequently, the relation $\Psi(\xi)\le (1+|\xi|^2)^{\frac{\alpha+\gamma}{2}}\le C(1+|\xi|^{\alpha+\gamma})$ and Assumption \ref{dalang-condition} indicate 
\begin{align} \label{eq:I11}\notag
I_{1,1}:=&~C|s_{2}-s_{1}| \int_{|\xi| \le\left|s_{2}-s_{1}\right|^{-\frac{1}{\alpha+\gamma}}}\Psi(\xi)\left(\frac{1}{1+|\xi|^{2}}\right)^{(\alpha+\gamma) H} \mu(\mathrm{d} \xi)\\\notag
\le &~C|s_2-s_1|\int_{\br^d}\left(\frac{1}{1+|\xi|^{2}}\right)^{(\alpha+\gamma) H} \mu(\mathrm{d} \xi)+\tilde I_{1,1} \\
\le&~C_{\varepsilon}|s_1-s_2|^{2\alpha_2\wedge (1-\varepsilon\mathbb 1_{\{2\alpha_1=\alpha+\gamma\}})}.
\end{align}
Thus, we conclude that 
\begin{align}\label{eq:I1}
\left|I_1\right| \le C(I_{1,1}+I_{1,2})\le C|s_1-s_2|^{2\alpha_2\wedge (1-\varepsilon\mathbb 1_{\{2\alpha_1=\alpha+\gamma\}})}.
\end{align}

It remains to estimate $I_2$. Since $|s-t| \le(2 \delta)^{\alpha_2^{-1}}$ holds 
for all $(s,y) \in B_{2 \delta}(t, x)$, we get
$$s-t^{\prime} \ge t-t^{\prime}-|s-t| \ge(2 \delta)^{\alpha_2^{-1}}=: c_{2}(\delta)>0,$$
which indicates that
\begin{align*}
|I_2|  \le \;&\,\alpha_{H} \int_{s_{1}}^{s_{2}} \int_{0}^{t^{\prime}} c_{2}(\delta)^{2 H-2} \ud r \ud w \int_{\mathbb{R}^{d}} \mu(\mathrm{d} \xi) e^{-(s_2-r)\Psi(\xi)} e^{-\left(t^{\prime}-w\right) \Psi(\xi)} \\
\le\; &\,C \int_{\br^d} \frac{1-e^{-(s_2-s_1)\Psi(\xi)}}{\Psi(\xi)} \frac{1-e^{-t^{\prime} \Psi(\xi)}}{\Psi(\xi)} \mu(\mathrm{d} \xi)\\
\le\;&\,C |s_2-s_1|\int_{|\xi|<|s_2-s_1|^{-\frac{1}{\alpha+\gamma}}} \frac{1-e^{-t^{\prime} \Psi(\xi)}}{\Psi(\xi)} \mu(\mathrm{d} \xi)+C \int_{|\xi|\ge|s_2-s_1|^{-\frac{1}{\alpha+\gamma}}} \frac{1}{\Psi(\xi)^2} \mu(\mathrm{d} \xi)\\
=:&\;I_{2,1}+I_{2,2}.
\end{align*}
For each $\vartheta>0$, there exists some constant $C:=C_{\vartheta}$ such that 
\begin{equation}\label{eq:lower}
\Psi(\xi)^2\ge C(1+|\xi|^2)^{\alpha+\gamma}\ge C(1+|\xi|^2)^{(\alpha+\gamma)H}\quad\quad\forall~|\xi|\ge \vartheta .
\end{equation}
By virtue of \eqref{eq:lower} and
 \eqref{muhyp}, 
\begin{align*}
&~\int_{|\xi|<|s_2-s_1|^{-\frac{1}{\alpha+\gamma}}} \frac{1-e^{-t^{\prime} \Psi(\xi)}}{\Psi(\xi)} \mu(\mathrm{d} \xi)\\
=&~\int_{|\xi|<c_1(\delta)} \frac{1-e^{-t^{\prime} \Psi(\xi)}}{\Psi(\xi)} \mu(\mathrm{d} \xi)+\int_{c_1(\delta)\le|\xi|<|s_2-s_1|^{-\frac{1}{\alpha+\gamma}}} \frac{1-e^{-t^{\prime} \Psi(\xi)}}{\Psi(\xi)} \mu(\mathrm{d} \xi)\\
\le&~t^{\prime}\int_{|\xi|<c_1(\delta)} \mu(\mathrm{d} \xi)+\int_{c_1(\delta)\le|\xi|<|s_2-s_1|^{-\frac{1}{\alpha+\gamma}}}\Psi(\xi) \frac{1}{\Psi(\xi)^2} \mu(\mathrm{d} \xi)\\
\le&~C+C\int_{|\xi|<|s_2-s_1|^{-\frac{1}{\alpha+\gamma}}}\Psi(\xi) \left(\frac{1}{1+|\xi|^2}\right)^{(\alpha+\gamma)H} \mu(\mathrm{d} \xi),
\end{align*}
which together with \eqref{eq:I11} gives
$$
I_{2,1}\le  C|s_{2}-s_{1}|+CI_{1,1}\le C_{\delta, H, d,\varepsilon}|s_{2}-s_{1}|^{2\alpha_2\wedge(1-\varepsilon\mathbb 1_{\{2\alpha_1=\alpha+\gamma\}})}.
$$
On the other hand, since $|s_2-s_1|^{-\frac{1}{\alpha+\gamma}}>c_1(\delta)$, we apply \eqref{eq:lower} and \eqref{eq:I12} to obtain
\begin{align*}
I_{2,2} =&~C\int_{|\xi|\ge|s_2-s_1|^{-\frac{1}{\alpha+\gamma}}} \frac{1}{\Psi(\xi)^2} \mu(\mathrm{d} \xi)\\
\le &~C_\delta\int_{|\xi|\ge|s_2-s_1|^{-\frac{1}{\alpha+\gamma}}} \left(\frac{1}{1+|\xi|^2} \right)^{(\alpha+\gamma)H}\mu(\mathrm{d} \xi)\\
=&~C_\delta I_{1,2}\le C|s_1-s_2|^{2\alpha_2}.
\end{align*}
Hence, combining the estimates of $I_{2,1}$ and $I_{2,2}$, we arrive at
$$
\left|I_{2}\right| \le C_{\delta, H, \beta,d,\varepsilon}|s_{2}-s_{1}|^{2\alpha_2\wedge(1-\varepsilon\mathbb 1_{\{2\alpha_1=\alpha+\gamma\}})},
$$
which in combination with \eqref{eq:I1} completes the proof.
\end{proof}

\begin{theorem}\label{critical}
Assume that Assumption \ref{noise} holds and $\alpha_1\in(0,1).$ Then $Q$ is the critical dimension for hitting points and points are polar for $\hat{u}$, that is, for all $z \in \br^Q$,
$$
\bp\{\exists(t, x) \in( 0,+\infty)\times \br^d: \hat{u}(t, x)=z\}=0.
$$
\end{theorem}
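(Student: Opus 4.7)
The plan is to invoke the abstract criterion of \cite{MR3737922} for polarity of points in the critical dimension, applied to the harmonizable representation $v$ rather than to the mild solution $u$ itself. Since $u$ and $v$ are centered Gaussian random fields with identical covariance, they have the same law, so it suffices to show that every singleton $\{z\}\subset\br^Q$ is polar for the $\br^Q$-valued field $\hat v$ built from $Q$ independent copies of $v$. By the countable subadditivity of probability, one can cover $(0,\infty)\times\br^d$ by $\varrho$-balls $B_{2\delta}(t,x)$ and it suffices to prove that for each such ball
\[
\bp\bigl(\exists\, (s,y)\in B_{2\delta}(t,x) : |\hat v(s,y)-z|\le\varepsilon\bigr) = o(\varepsilon^{Q})\quad\text{as }\varepsilon\downarrow 0,
\]
at a rate summable in the covering.

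\textbf{Key technical steps.} First, decompose $v$ into independent low- and high-frequency pieces using Lemma \ref{approximationfield}: for a scale $a=a(\varepsilon)$ to be chosen, write $v(s,y)=v([0,a],s,y)+v([a,\infty),s,y)$, so that the two summands are independent real-valued white noise integrals over disjoint index sets. The low-frequency piece is mean-square Lipschitz in $(s,y)$ and thus essentially constant at scale $\varepsilon$, while the high-frequency piece satisfies the telescoping bound of Lemma \ref{approximationfield}(ii) with exponents $\gamma_1=\alpha_1^{-1}-1$ and $\gamma_2=\alpha_2^{-1}-1$. Combining this with (a) the equivalence of the canonical pseudo-distance and $\varrho$ from Corollary \ref{joint}, and (b) the H\"older regularity with exponents $\delta_1>\alpha_1$ and $\delta_2>\alpha_2$ of the covariance $(s,y)\mapsto\ee[v(s,y)v(t',x')]$ at the separated reference point $(t',x')$ from Lemma \ref{corbound}, all three ingredients demanded by the criterion of \cite{MR3737922} are available: a sharp two-parameter modulus of continuity, a quantitative independent harmonizable-type decomposition, and covariance smoothness at a distant probe point. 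The criterion then yields the desired small-ball bound $o(\varepsilon^{Q})$, uniformly in the covering, which after summation gives $\bp(\exists (t,x) : \hat v(t,x)=z)=0$.

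\textbf{Main obstacle.} The main obstacle is to certify that the hypotheses of \cite{MR3737922} are satisfied with the exact exponents $\alpha_1, \alpha_2$; in particular, the separated-point regularity exponents $\delta_1, \delta_2$ of Lemma \ref{corbound} must exceed $\alpha_1, \alpha_2$ strictly, so that the logarithmic refinement needed for the critical-dimension bound is available. This is exactly where the assumption $\alpha_1\in(0,1)$ enters: in the proofs of Lemmas \ref{approximationfield} and \ref{corbound}, the convergence of integrals such as
\[
\int_0^1 r^{\frac{2}{\alpha+\gamma}-2\alpha_2-1}\,\mathrm dr\int_{|\zeta|<r^{-1/(\alpha+\gamma)}}\frac{|\zeta|^2}{|\zeta|^{2(\alpha+\gamma)}+1}\,\mu(\mathrm d\zeta)
\]
relies on $\frac{2}{\alpha+\gamma}-2\alpha_2-1>-1$ (equivalent to $\alpha_1<1$) together with the integrability at infinity encoded by $1-2H>-1$. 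The complementary case $\alpha_1\ge 1$ requires a different harmonizable splitting reflecting the greater smoothness of the field and is left as an open problem.
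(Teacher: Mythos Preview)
Your proposal is correct and follows essentially the same route as the paper: pass to the harmonizable representation $v$ (same law as $u$), verify the hypotheses of the abstract critical-dimension criterion via Lemma~\ref{approximationfield} (independent frequency decomposition with the exponents $\gamma_1,\gamma_2$) and Lemma~\ref{corbound} (separated-point covariance regularity with $\delta_i>\alpha_i$), then conclude polarity on each compact rectangle and extend by countable covering. Two small points: the paper actually invokes \cite[Theorem~2.6]{MR4317713} rather than \cite{MR3737922} (both belong to the same line of work and the Gaussian case is covered either way), and it explicitly records the non-degeneracy bound \eqref{boundsigma} among the verified hypotheses, which you should also list since the criterion requires $\sigma_{t,x}^2$ to be bounded away from zero on the compact parameter set.
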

\begin{proof}
By Corollary \ref{polaritypoints}, $Q=\frac{d}{\alpha_1}+\frac{1}{\alpha_2}$ is the critical dimension for hitting points. Let $\hat{v}=\left\{\hat{v}(t, x)\right\}_{(t,x) \in \br_+\times \br^d}$ be an $\br^Q$-valued random field whose components $\hat{v}_j, j = 1,\dots,Q$, are independent copies of $v$.
Based on \eqref{boundsigma}, Lemma \ref{approximationfield} and Lemma \ref{corbound}, it follows from \cite[Theorem 2.6]{MR4317713} that for all $z \in \br^Q$ and all compact rectangles $I\times J \subset(0, \infty)\times \br^d$,
$$
\bp\{\exists(t, x) \in I\times J: \hat{u}(t, x)=z\}=\bp\{\exists(t, x) \in I\times J: \hat v(t, x)=z\}=0,
$$
where we used the fact that $u$ and $v$ has the same law. Since this holds for all compact rectangles $I\times J \subset(0, \infty)\times \br^d$, the proof is finished.
\end{proof}

\begin{remark}\label{conjection}
For the linear stochastic biharmonic heat equation on a $d$–dimensional torus $(d = 1,2,3)$ driven by space-time white noise which corresponds to \eqref{FKE} with $\alpha=0,\gamma=2,H=\frac{1}{2}$ and $\beta=d$, the issue of polarity for singletons in the critical dimension was proposed by the authors of \cite{Hinojosa} as an open question for further investigations. And they guessed that the singletons are polar in the critical dimension.  We mention that all the proofs in Subsection \ref{points-critical} are valid for the case of $H=\frac{1}{2}$ and thus provide strong evidence for this conjecture.
\end{remark}

\bibliographystyle{amsplain}
\bibliography{reference}

\providecommand{\bysame}{\leavevmode\hbox to3em{\hrulefill}\thinspace}
\providecommand{\MR}{\relax\ifhmode\unskip\space\fi MR }
\providecommand{\MRhref}[2]{%
  \href{http://www.ams.org/mathscinet-getitem?mr=#1}{#2}
}
\providecommand{\href}[2]{#2}
\begin{thebibliography}{10}

\bibitem{MR2144558}
J.~M. Angulo, V.~V. Anh, R.~McVinish, and M.~D. Ruiz-Medina, \emph{Fractional
  kinetic equations driven by {G}aussian or infinitely divisible noise}, Adv.
  in Appl. Probab. \textbf{37} (2005), no.~2, 366--392. \MR{2144558}

\bibitem{MR1808915}
J.~M. Angulo, M.~D. Ruiz-Medina, V.~V. Anh, and W.~Grecksch, \emph{Fractional
  diffusion and fractional heat equation}, Adv. in Appl. Probab. \textbf{32}
  (2000), no.~4, 1077--1099. \MR{1808915}

\bibitem{MR1859007}
V.~V. Anh and N.~N. Leonenko, \emph{Spectral analysis of fractional kinetic
  equations with random data}, J. Statist. Phys. \textbf{104} (2001), no.~5-6,
  1349--1387. \MR{1859007}

\bibitem{MR2728174}
Raluca~M. Balan and Ciprian~A. Tudor, \emph{The stochastic wave equation with
  fractional noise: a random field approach}, Stochastic Process. Appl.
  \textbf{120} (2010), no.~12, 2468--2494. \MR{2728174}

\bibitem{MR3231213}
Jorge Clarke de~la Cerda and Ciprian~A. Tudor, \emph{Hitting times for the
  stochastic wave equation with fractional colored noise}, Rev. Mat. Iberoam.
  \textbf{30} (2014), no.~2, 685--709. \MR{3231213}

\bibitem{MR3088856}
Serge Cohen and Jacques Istas, \emph{Fractional {F}ields and {A}pplications},
  Math\'{e}matiques \& Applications (Berlin) [Mathematics \& Applications],
  vol.~73, Springer, Heidelberg, 2013, With a foreword by St\'{e}phane Jaffard.
  \MR{3088856}

\bibitem{MR1684157}
Robert~C. Dalang, \emph{Extending the martingale measure stochastic integral
  with applications to spatially homogeneous s.p.d.e.'s}, Electron. J. Probab.
  \textbf{4} (1999), no. 6, 29. \MR{1684157}

\bibitem{MR2365643}
Robert~C. Dalang, Davar Khoshnevisan, and Eulalia Nualart, \emph{Hitting
  probabilities for systems of non-linear stochastic heat equations with
  additive noise}, ALEA Lat. Am. J. Probab. Math. Stat. \textbf{3} (2007),
  231--271. \MR{2365643}

\bibitem{MR2496438}
\bysame, \emph{Hitting probabilities for systems for non-linear stochastic heat
  equations with multiplicative noise}, Probab. Theory Related Fields
  \textbf{144} (2009), no.~3-4, 371--427. \MR{2496438}

\bibitem{MR3737922}
Robert~C. Dalang, Carl Mueller, and Yimin Xiao, \emph{Polarity of points for
  {G}aussian random fields}, Ann. Probab. \textbf{45} (2017), no.~6B,
  4700--4751. \MR{3737922}

\bibitem{MR4317713}
\bysame, \emph{Polarity of almost all points for systems of nonlinear
  stochastic heat equations in the critical dimension}, Ann. Probab.
  \textbf{49} (2021), no.~5, 2573--2598. \MR{4317713}

\bibitem{MR2073187}
Robert~C. Dalang and Eulalia Nualart, \emph{Potential theory for hyperbolic
  {SPDE}s}, Ann. Probab. \textbf{32} (2004), no.~3A, 2099--2148. \MR{2073187}

\bibitem{MR2759182}
Robert~C. Dalang and Marta Sanz-Sol\'{e}, \emph{Criteria for hitting
  probabilities with applications to systems of stochastic wave equations},
  Bernoulli \textbf{16} (2010), no.~4, 1343--1368. \MR{2759182}

\bibitem{MR4333508}
Adri\'{a}n Hinojosa-Calleja and Marta Sanz-Sol\'{e}, \emph{Anisotropic
  {G}aussian random fields: criteria for hitting probabilities and
  applications}, Stoch. Partial Differ. Equ. Anal. Comput. \textbf{9} (2021),
  no.~4, 984--1030. \MR{4333508}

\bibitem{Hinojosa}
Adrián Hinojosa-Calleja and Marta Sanz-Solé, \emph{A linear stochastic
  biharmonic heat equation: hitting probabilities}, Stoch. Partial Differ. Equ.
  Anal. Comput. (2022).

\bibitem{MR1914748}
Davar Khoshnevisan, \emph{Multiparameter {P}rocesses}, Springer Monographs in
  Mathematics, Springer-Verlag, New York, 2002, An introduction to random
  fields. \MR{1914748}

\bibitem{MR3999099}
Junfeng Liu, \emph{Fractional kinetic equation driven by general space-time
  homogeneous {G}aussian noise}, Bull. Malays. Math. Sci. Soc. \textbf{42},
  no.~6, 3475--3499.

\bibitem{MR2571321}
David M\'{a}rquez-Carreras, \emph{Generalized fractional kinetic equations:
  another point of view}, Adv. in Appl. Probab. \textbf{41} (2009), no.~3,
  893--910. \MR{2571321}

\bibitem{MR3374870}
\bysame, \emph{Small stochastic perturbations in a general fractional kinetic
  equation}, ESAIM Probab. Stat. \textbf{19} (2015), 81--99. \MR{3374870}

\bibitem{MR1902843}
C.~Mueller and R.~Tribe, \emph{Hitting properties of a random string},
  Electron. J. Probab. \textbf{7} (2002), no. 10, 29. \MR{1902843}

\bibitem{MR2513117}
Eulalia Nualart and Frederi Viens, \emph{The fractional stochastic heat
  equation on the circle: time regularity and potential theory}, Stochastic
  Process. Appl. \textbf{119} (2009), no.~5, 1505--1540. \MR{2513117}

\bibitem{MR1692618}
C.~A. Rogers, \emph{Hausdorff {M}easures}, Cambridge Mathematical Library,
  Cambridge University Press, Cambridge, 1998, Reprint of the 1970 original,
  With a foreword by K. J. Falconer. \MR{1692618}

\bibitem{MR876085}
John~B. Walsh, \emph{An {I}ntroduction to {S}tochastic {P}artial {D}ifferential
  {E}quations}, \'{E}cole d'\'{e}t\'{e} de probabilit\'{e}s de {S}aint-{F}lour,
  {XIV}---1984, Lecture Notes in Math., vol. 1180, Springer, Berlin, 1986,
  pp.~265--439. \MR{876085}

\end{thebibliography}


\end{document}